\documentclass[11pt,a4paper]{article}

\usepackage[T1]{fontenc}
\usepackage{lmodern}
\usepackage{microtype}
\usepackage{amsmath,amssymb,amsthm,mathtools}
\usepackage{enumitem}
\usepackage{authblk}
\usepackage[margin=1in]{geometry}
\usepackage{xcolor}
\usepackage[colorlinks=true,linkcolor=blue!55!black,citecolor=blue!55!black,urlcolor=blue!55!black]{hyperref}
\hypersetup{
  pdftitle={A Sharp Spectral Erdos--Ko--Rado Theorem for Uniform Hypergraphs},
  pdfauthor={Mengyu Cao, Mei Lu, Haixiang Zhang},
  pdfsubject={Spectral extremal problems for intersecting uniform families},
  pdfkeywords={Erdos--Ko--Rado theorem, intersecting family, adjacency tensor, spectral radius, complete intersection}
}

\newtheorem{theorem}{Theorem}[section]
\newtheorem{lemma}[theorem]{Lemma}
\newtheorem{proposition}[theorem]{Proposition}
\newtheorem{corollary}[theorem]{Corollary}
\newtheorem{conjecture}[theorem]{Conjecture}

\theoremstyle{remark}

\newcommand{\F}{\mathcal F}
\newcommand{\G}{\mathcal G}
\newcommand{\A}{\mathcal A}
\newcommand{\D}{\mathcal D}
\newcommand{\C}{\mathcal C}
\newcommand{\R}{\mathbb R}
\newcommand{\dd}{\,\mathrm d}
\newcommand{\dotcupbig}{\mathop{\dot\bigcup}}

\title{A Sharp Spectral Erd\H{o}s--Ko--Rado Theorem for Uniform Hypergraphs}
\author[1]{Mengyu Cao\thanks{E-mail: \texttt{myucao@ruc.edu.cn}. Supported by the National Natural Science Foundation of China (12301431) and Beijing Natural Science Foundation (1262010).}}
\author[2]{Mei Lu\thanks{E-mail: \texttt{lumei@tsinghua.edu.cn}. M. Lu is supported by the National Natural Science Foundation of China (Grant 12571372) and Beijing Natural Science Foundation (Grant 1262010).}}
\author[2]{Haixiang Zhang\thanks{Corresponding author. E-mail: \texttt{zhang-hx22@mails.tsinghua.edu.cn}.}}

\affil[1]{\small Institute for Mathematical Sciences, Renmin University of China, Beijing 100086, China}
\affil[2]{\small Department of Mathematical Sciences, Tsinghua University, Beijing 100084, China}
\date{}

\begin{document}

\maketitle

\begin{abstract}
The spectral Erd\H{o}s--Ko--Rado problem asks for the largest
adjacency-tensor spectral radius of a $t$-intersecting $k$-uniform family.
Keevash, Lenz and Mubayi proved that, for fixed $k,t$ and sufficiently large
$n$, the unique extremal family is a full $t$-star, and asked whether such a theorem
extends to all $n$.  Let
$\A_r=\{F\in\binom{[n]}k:|F\cap[t+2r]|\ge t+r\}$ be the Frankl families and
write $\rho_r$ for their spectral radii.  For $2\le t<k$ and $n>2k-t$, we
prove that $\A_0$ is spectrally extremal if and only if $\rho_0\ge\rho_1$; it
is unique up to permutation when the inequality is strict, whereas $\A_0$
and $\A_1$ are both extremal at equality.  The layerwise pull used in the
Ahlswede--Khachatrian cardinality proof is not applicable here: applied
directly, it may decrease the spectral radius.  Our proof instead pulls all
boundary layers simultaneously and applies Perron tail symmetrization.  It
follows that $\A_0$ is uniquely extremal for
$n\ge (t+1)(k-t+1)+\lceil(t+1)\log(t+1)\rceil$; the leading coefficient $t+1$
is best possible for fixed $t$.  We also determine all extremal structures
for $t=1$ throughout the range $n\ge2k$.
\end{abstract}

\noindent\textbf{Keywords.} Erd\H{o}s--Ko--Rado theorem; $t$-intersecting family;
adjacency tensor; spectral radius.

\noindent\textbf{MSC 2020.} 05D05; 05C65; 15A69.

\section{Introduction}

For a finite set $X$, write $\binom Xk$ for the collection of its $k$-element
subsets and let $[n]=\{1,\ldots,n\}$.  A family
$\F\subseteq\binom{[n]}k$ is \emph{$t$-intersecting} if
$|F\cap F'|\ge t$ for every $F,F'\in\F$.  Throughout, ``up to permutation''
means up to applying a permutation of the ground set.  The
Erd\H{o}s--Ko--Rado theorem states that, for $n\ge2k$, an intersecting family
has at most $\binom{n-1}{k-1}$ members; for $n>2k$, equality forces a full
star~\cite{EKR}.  Wilson obtained the corresponding full $t$-star theorem for
large ground sets~\cite{Wilson}.  Ahlswede and Khachatrian then settled every
parameter range in their complete intersection theorem~\cite{AK}: the maximum
cardinality is attained by one of
\begin{equation}\label{eq:AK-candidates}
 \A_r=\A_{t,r}(n,k)
 :=\left\{F\in\binom{[n]}k:|F\cap[t+2r]|\ge t+r\right\},
 \qquad 0\le r\le k-t.
\end{equation}
These are the \emph{Frankl families}.  The two endpoints are the full
$t$-star $\A_0$ and the complete $k$-graph on $2k-t$ vertices
$\A_{k-t}$; the intermediate families describe the transitions between
them.  Generating families and pushing--pulling arguments are central to the
cardinality theory; see~\cite{AKPushPull,FranklTokushige}.

Tensor eigenvalues, introduced independently by Qi and Lim~\cite{Qi,Lim},
make it possible to replace cardinality by a spectral objective.  Following
Cooper and Dutle~\cite{CooperDutle}, for a $k$-uniform family $\F$ and
$x=(x_1,\ldots,x_n)\in\R_{\ge0}^n$ let
\begin{equation}\label{eq:spectral-polynomial}
 P_{\F}(x):=k\sum_{F\in\F}\prod_{i\in F}x_i,
 \qquad
 \rho(\F):=\max_{\substack{x\ge0\\\sum_i x_i^k=1}}P_{\F}(x).
\end{equation}
Thus $\rho(\F)$ is the adjacency-tensor spectral radius in the standard
normalization.  A maximizing vector will be called a \emph{Perron vector}; its
existence follows from compactness, and the terminology reflects the
Perron--Frobenius theory for nonnegative tensors and multilinear
forms~\cite{FriedlandGaubertHan}.  A $t$-intersecting family is
\emph{spectrally extremal} if its spectral radius is largest among all
$t$-intersecting subfamilies of $\binom{[n]}k$.  Unlike cardinality,
$\rho(\F)$ also optimizes the distribution of weight among the vertices, so
ordinary edge comparisons do not by themselves control the spectral order.

Keevash, Lenz and Mubayi developed a general transfer framework for spectral
extremal problems and proved that, for fixed $k,t$ and all sufficiently large
$n$, the full $t$-star is, up to permutation, the unique spectral extremal
structure among $t$-intersecting families~\cite{KLM}.  Zhang and Zhang later
considered the adjacency $\mathcal A_\alpha$-tensor and the incidence
$\mathcal Q$-tensor~\cite{ZhangZhang}.  The sharp finite-parameter problem
remained open: one must determine the spectral order of the Frankl families
and, more importantly, prove that no other $t$-intersecting family has larger
spectral radius.

We solve this problem in the full-star regime.  Write
\[
 \rho_r:=\rho(\A_r),\quad 0\le r\le k-t.
\]
The two endpoint radii are given by
\begin{equation}\label{eq:endpoint-radii}
 \rho_0=\binom{n-t}{k-t}\left(\frac {k-t}{n-t}\right)^{\frac {k-t}{k}},
 \qquad
 \rho_{k-t}=\binom{2k-t-1}{k-t},
\end{equation}
where the first expression is interpreted as $1$ when $k-t=0$.  The
two-orbit symmetry of $\A_r$ reduces every $\rho_r$ to a one-variable
maximum; see Proposition~\ref{prop:candidate-orbit-formula}.  Our first theorem
shows that, for $t\ge2$, the comparison of only the first two radii decides the
global problem.

\begin{theorem}\label{thm:first-candidate}
Let $2\le t<k$ and $n>2k-t$.  Then the full $t$-star is spectrally extremal
among the $t$-intersecting subfamilies of $\binom{[n]}k$ if and only if
\begin{equation}\label{eq:first-candidate-criterion}
 \rho_0\ge\rho_1.
\end{equation}
If $\rho_0>\rho_1$, then $\A_0$ is the unique  extremal structure among the $t$-intersecting
subfamilies of $\binom{[n]}k$, up to permutation.  If $\rho_0=\rho_1$, then both $\A_0$ and $\A_1$
are spectral extremal structures, so the full $t$-star is not unique.
\end{theorem}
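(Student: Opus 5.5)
The ``only if'' direction and the equality case are immediate: $\A_1$ is itself $t$-intersecting, so if $\A_0$ is extremal then $\rho_0\ge\rho_1$, and if $\rho_0=\rho_1$ then $\A_1$ (which is not isomorphic to $\A_0$ since $n>2k-t$ and $k>t$) attains the same value. The content is therefore the ``if'' direction: assuming $\rho_0\ge\rho_1$, every $t$-intersecting $\F$ satisfies $\rho(\F)\le\rho_0$, with equality only for a copy of $\A_0$ when the inequality is strict. I will argue by taking a $t$-intersecting family $\F$ of maximum spectral radius, with Perron vector $x$, and reducing it step by step toward the Frankl families while never decreasing $\rho$.

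First I reduce to left-compressed (shifted) families. Order coordinates so that $x_1\ge x_2\ge\cdots\ge x_n$. A shift $S_{ij}$ with $i<j$ preserves $t$-intersection and does not decrease $P_\F(x)$ at this fixed $x$, since it replaces $\prod_{F}x$ by a product with $x_i\ge x_j$. Hence $\rho(S_{ij}\F)\ge P_{S_{ij}\F}(x)\ge\rho(\F)$, and I iterate until $\F$ is shifted. I may also assume $\F$ is maximal $t$-intersecting. Next, following Ahlswede--Khachatrian, I pass to a generating family and look at the largest index $s$ occurring among minimal generators. If $s\ge t+2r$ for the appropriate $r$, the classical move pulls generator sets from the boundary layer to obtain a $t$-intersecting family with one fewer large index. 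The abstract warns that pulling a single layer may decrease $\rho$. So I would pull \emph{all} boundary layers simultaneously, replacing every generator meeting $\{s\}$ by a symmetric modification, and show that the result still dominates in spectral radius.

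To do this I use Perron tail symmetrization. After the pull, the vertices $s,\dots,n$ (the tail) play symmetric roles. Replacing $x$ on the tail by the $k$-th power mean of its entries, and using the AM--GM / Maclaurin-type inequality on the symmetric multilinear pieces, shows that the new family evaluated at the symmetrized vector is at least $P_\F(x)$. This inequality is the main obstacle. It has to be arranged so that the combined gain from all pulled layers outweighs the loss, which amounts to a convexity argument in the tail weight. I would attempt it by writing $P$ as a polynomial in the elementary symmetric functions of the tail variables and applying Maclaurin's inequalities termwise.

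Iterating these moves collapses $\F$ to some Frankl family $\A_r$. The reduction shows $\rho(\F)\le\max_r\rho_r$. It then remains to show that $\rho_0\ge\rho_1$ implies $\rho_0\ge\rho_r$ for all $r$, using the one-variable formula of Proposition~\ref{prop:candidate-orbit-formula}. This requires a unimodality or single-crossing property of $r\mapsto\rho_r$, which I would prove by comparing consecutive one-variable maxima. Uniqueness under strict inequality follows by tracking when each step is an equality: strict AM--GM forces the tail already to be symmetric, and strict shifting forces $\F$ to be a copy of a Frankl family. Since $\rho_0>\rho_r$ for $r\ge1$, only $\A_0$ survives.
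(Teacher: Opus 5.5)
Your ``only if'' direction and the equality case are fine. The ``if'' direction, however, has a genuine gap exactly at the step you flag as the main obstacle, and your description of that step would fail. If ``pull all boundary layers'' means deleting $s$ from every minimal generator containing $s$, the result is not $t$-intersecting. By Lemma~\ref{lem:tight-pairs}, every boundary generator $A$ has a tight partner $B\ni s$ with $|A\cap B|=t$, $A\cup B=[s]$ and $|A|+|B|=s+t$, so $A\setminus\{s\}$ and $B\setminus\{s\}$ meet in only $t-1$ points.

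The paper's pull is \emph{selective}, and the selection is where all the work lies:
\begin{itemize}
\item The exact trace decomposition $\mathcal F=\mathcal F^{\circ}\,\dot\cup\,\dotcupbig_{A\in\mathcal G^{\partial}}\mathcal C_s(A)$ makes each pulled class gain exactly the factor $\Gamma_q(\eta)\ge g_q$ at the tail-symmetrized vector $y$.
\item From each complementary rank pair $p<q$, only one whole rank is kept, chosen via $(g_u-1)(g_v-1)>1$.
\item For the middle rank $r=(s-t)/2$, only the generators omitting a common coordinate $a_0$ are kept. This needs $\delta_{a_0}\ge\max\{r/(s-1),1/r\}$ (the $1/r$ bound uses $\rho(\mathcal F)>\rho_r$ in one case) together with $g_{k-t-r}>L_{t,r}$.
\item That last estimate needs $t\ge2$ and the conditions $n\ge2k-t+2$ and $n-k-1\ge t(k-t)\bigl(\tfrac{k-t}{n-t}\bigr)^{1/k}$, both extracted from dominance of $\mathcal A_0$.
\end{itemize}
Applying Maclaurin termwise only yields the harmless inequality $P_{\mathcal F^{\circ}}(x)\le P_{\mathcal F^{\circ}}(y)$. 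It does not produce the gain-versus-loss ledger on the boundary.

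Your global logic also cannot work as stated. The claim ``iterating collapses $\mathcal F$ to some Frankl family, so $\rho(\mathcal F)\le\max_r\rho_r$'' is precisely the value part of Conjecture~\ref{conj:SCIT}, which is open for $t\ge2$ outside the star-dominant range. No pull that always lowers the support without decreasing $\rho$ can exist. At $n=2k-t+1$ one has $\rho_{k-t}>\rho_0$ (Lemma~\ref{lem:numerical-consequences}), so iterated pulling from $\mathcal A_{k-t}$ must lose somewhere. Ahlswede--Khachatrian also needed pushing, which you have no spectral version of.

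The paper reverses the order of your argument. It first proves that $\rho_0\ge\rho_1$ implies $\rho_0\ge\rho_1\ge\cdots\ge\rho_{k-t}$ (Proposition~\ref{prop:propagation}, via adjacent pulls with $g_{k-t-r}\ge(t+2r-1)/r$). It then uses this dominance as the \emph{input} that makes the pull strictly profitable on a minimal-support counterexample, giving $\rho(\mathcal Q)>\rho(\mathcal F)$ directly.

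Your unspecified ``unimodality/single-crossing'' step must use $t\ge2$: for $t=1$ and $(n,k)=(8,4)$, one has $\rho_0>\rho_1$ but $\rho_3>\rho_0$.

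Finally, uniqueness does not follow from equality tracking; ``strict shifting forces a Frankl family'' is not a valid implication. The paper instead takes a non-star extremal family with the most edges. It shows that shifting keeps it non-star (Lemma~\ref{lem:shift-rigidity}, which needs $n\ge2k-t+2$) and that maximality can be assumed (Lemma~\ref{lem:star-strict-monotonicity}). It then applies the strict pull to reach a contradiction.
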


Thus the unspecified large-$n$ condition is replaced by a sharp, directly
computable criterion: $\A_1$ is a strict counterexample when
$\rho_0<\rho_1$, while equality gives two inequivalent extremal structures.
The criterion also yields an explicit threshold involving only $n,k,t$.
Here $\log$ denotes the natural logarithm.

\begin{corollary}\label{cor:explicit-threshold}
Let $2\le t<k$.  If
\begin{equation}\label{eq:explicit-threshold}
 n\ge (t+1)(k-t+1)
      +\left\lceil (t+1)\log(t+1)\right\rceil,
\end{equation}
then the full $t$-star is spectrally extremal among the $t$-intersecting
subfamilies of $\binom{[n]}k$, and it is the unique extremal structure up to permutation.
\end{corollary}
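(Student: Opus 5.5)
We derive the corollary from Theorem~\ref{thm:first-candidate}: it suffices to show $\rho_0>\rho_1$ whenever \eqref{eq:explicit-threshold} holds. The value $\rho_0$ is given by \eqref{eq:endpoint-radii}. For $\rho_1$ we use the two-orbit reduction of Proposition~\ref{prop:candidate-orbit-formula}. The family $\A_1$ consists of all $k$-sets meeting $S=[t+2]$ in at least $t+1$ elements. Put weight $a$ on each vertex of $S$ and weight $b$ on each of the remaining $n-t-2$ vertices, subject to $(t+2)a^k+(n-t-2)b^k=1$. Then
\[
 \rho_1=\max_{a,b}\;k\left[\binom{t+2}{t+1}a^{t+1}b^{k-t-1}\binom{n-t-2}{k-t-1}+a^{t+2}b^{k-t-2}\binom{n-t-2}{k-t-2}\right].
\]
Similarly, $\rho_0=\max k\binom{n-t}{k-t}a^tb^{k-t}$ under $ta^k+(n-t)b^k=1$.

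The plan is to upper-bound $\rho_1$ and compare it directly with the closed form of $\rho_0$. The dominant term of $\rho_1$ is $(t+2)\binom{n-t-2}{k-t-1}a^{t+1}b^{k-t-1}$. Write $m=k-t$ and $N=n-t$. We maximize each term separately under the same constraint; this only overestimates the sum, and Lagrange multipliers give each maximum in closed form. We then compare with $\rho_0=\binom{N}{m}(m/N)^{m/k}$.

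The ratio of the main term to $\rho_0$ is
\[
 \frac{(t+2)\binom{N-2}{m-1}}{\binom Nm}\times(\text{weight factor}) \approx \frac{(t+2)m(N-m)}{N^2}\times(\text{weight factor}).
\]
The weight factor comes from concentrating mass on $t+1$ (rather than $t$) heavy vertices. Heuristically, it is about $\bigl((N/m)\cdot\text{const}\bigr)^{1/k}$, a quantity governed by $(t+1)^{(t+1)/k}$-type terms from the $x^{t+1}$ versus $x^{t}$ optimization. Requiring the product to be below $1$ reduces, after taking logarithms and using $\log(1-x)\le -x$, to an inequality of the form
\[
 N\gtrsim (t+1)m+(t+1)\log(t+1)+\text{lower order}.
\]
This matches \eqref{eq:explicit-threshold} with $N+t=n$. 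The second term of $\rho_1$ carries an extra factor roughly $(m-1)/(N-m)\cdot a/b$, which is small in this range. We absorb it using the slack provided by the ceiling, or by the $+t$ hidden in $(t+1)(k-t+1)=(t+1)m+t+1$.

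The main obstacle is making this comparison rigorous uniformly in $k$ and $t$, including the regime where $m$ is small relative to $t$. There the exponent $1/k$ in the weight factors does not decay, and the optimal $a/b$ ratio in $\A_1$ differs substantially from that in $\A_0$. I would first try a cleaner route: bound $\rho_1$ via the inequality $\rho(\F)\le$ (value of $P_\F$ at its own optimum), and substitute an explicit optimizer obtained from the one-variable reduction. Since $\rho_0$ is exact, it then suffices to show that
\[
 g(x)=\frac{P_{\A_1}(x)}{\rho_0}<1
\]
for all $x$ in the two-orbit form. Parametrizing by $s=(t+2)a^k\in[0,1]$, we have $\log P_{\A_1}$ as an explicit function of $s$. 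We bound it by its supremum using the elementary inequality $s^{\alpha}(1-s)^{\beta}\le \alpha^\alpha\beta^\beta/(\alpha+\beta)^{\alpha+\beta}$. The resulting inequality in $n,k,t$ is then verified with standard estimates $\bigl(1+\tfrac1x\bigr)^x<e$. This step is where the constant $(t+1)\log(t+1)$ arises, and the careful bookkeeping there is the crux.
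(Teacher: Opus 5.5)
Your reduction is the right one and is also the paper's: the right-hand side of \eqref{eq:explicit-threshold} exceeds $2k-t$, so by Theorem~\ref{thm:first-candidate} everything comes down to proving $\rho_0>\rho_1$. That inequality is the entire content of the corollary, and your proposal does not prove it. The heuristic you give for it also puts the source of the threshold in the wrong place.

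Write $m=k-t$ and $N=n-t=cm+O(1)$, and let $m\to\infty$ with $t$ fixed. The separate maxima of your two terms satisfy
\[
\frac{\text{first term}}{\rho_0}\to\frac{(t+2)(c-1)}{c^2},
\qquad
\frac{\text{second term}}{\rho_0}\to\frac{1}{c^2},
\]
and their sum is $1-(c-1)(c-t-1)/c^2$.

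So the first term alone is already below $\rho_0$ by a constant factor at $c=t+1$. For $t=2$ one even has $(t+2)(c-1)/c^2\le1$ for every $c$. Hence ``main term $<\rho_0$'' does not produce the coefficient $t+1$, contrary to your heuristic. It is the second term, of relative size about $(t+1)^{-2}$, that closes the gap exactly at $c=t+1$.

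The slack you propose to absorb it with (the ceiling, the extra $t+1$) shifts $N$ by only $O(1)$. It therefore changes the ratio by only $O(1/k)$ and cannot absorb a term of constant relative size. What has to be shown is a second-order statement. After the exact cancellation $t(t+2)+1=(t+1)^2$, the $O(1/k)$ contributions of both terms must have the right sign. These come from the exponents $j/k$ and from the $O(1)$ shifts in $N$. The control must be uniform in $t$ and $k-t$, including the regime where $k-t$ is small compared with $t$.

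Bounding each term by its own maximum (which is all the AM--GM ``cleaner route'' does) may well survive this under the stated threshold. But none of that bookkeeping is carried out, so what you have is a plan rather than a proof.

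The missing idea is that the paper never optimizes $P_{\A_1}$ directly. Lemma~\ref{lem:adjacent-pull} with $r=1$ pulls the boundary of $\A_1$ back onto a copy of $\A_0$ at the tail-symmetrized vector. It shows that $\rho_0>\rho_1$ follows from the single gain inequality $g_{k-t-1}>B_{t,1}=t+1$. For $k-t\ge2$,
\[
g_{k-t-1}^k=\Bigl(\frac{n-t-1}{k-t}\Bigr)^{t}\Bigl(\frac{n-t-2}{k-t-1}\Bigr)^{k-t-1}
>\Bigl(\frac{n-t-1}{k-t}\Bigr)^{k-1}
\ge(t+1)^{k-1}\Bigl(1+\frac{\log(t+1)}{k-t}\Bigr)^{k-1},
\]
and the last factor is at least $t+1$ by elementary logarithm estimates. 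For $k-t=1$ one uses $g_0^k=(n-t-1)^t$ directly. This is where $(t+1)\log(t+1)$ really comes from: it supplies the $k$-th factor of $t+1$ that the exponent $k-1$ leaves uncovered.
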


For fixed $t$, the threshold in~\eqref{eq:explicit-threshold} is
$(t+1)(k-t)+O_t(1)$.  The coefficient $t+1$ is best possible at leading
order: the necessary estimate~\eqref{eq:basic-numerical-condition} shows that
if $n=c(k-t)+O_t(1)$ and the full $t$-star is spectrally extremal, then
$c\ge t+1$.

The case $t=1$ requires separate treatment.  Zhang and Zhang stated a
spectral EKR upper bound for the $\mathcal A_\alpha$-tensor of intersecting
$k$-uniform hypergraphs in the range $n\ge2k$~\cite{ZhangZhang}.  At
$\alpha=0$, their statement specializes to an adjacency-tensor bound attained
by the full star, but this specialization fails at the endpoint $n=2k$ under
the normalization~\eqref{eq:spectral-polynomial}.  Indeed, the last Frankl
family
$
\A_{k-1}=\binom{[2k-1]}k
$
satisfies
$
\left(\frac{\rho_{k-1}}{\rho_0}\right)^k
=\frac{k^k}{(2k-1)(k-1)^{k-1}}>1
$
for every $k\ge2$.  Thus the endpoint failure is systematic rather than a
sporadic example.  The correct statement combines our boundary pull for
$n\ge2k+1$ with the sharp edge-count spectral theorem of Bai and Lu and, for
$k=2$, Stanley's graph theorem~\cite{BaiLu,Stanley}.

\begin{theorem}\label{thm:t-one}
Let $k\ge2$ and $n\ge2k$.
\begin{enumerate}[label=\textup{(\arabic*)},leftmargin=2.2em]
 \item If $n\ge2k+1$, then every intersecting
 $\F\subseteq\binom{[n]}k$ satisfies $\rho(\F)\le\rho_0$.

 If
 $(n,k)\ne(5,2)$, then $\rho(\F)=\rho_0$  if and only if $\F$ is a full star, up to
 permutation.

  If $(n,k)=(5,2)$, then $\rho(\F)=\rho_0$ if and only if, up to
 permutation, $\F$ is the full star $K_{1,4}$ or the triangle $K_3$ together
 with two isolated vertices.
 \item If $n=2k$, then every intersecting $\F\subseteq\binom{[2k]}k$ satisfies
 \[
  \rho(\F)\le \binom{2k-2}{k-1}=\rho_{k-1},
 \]
 with equality if and only if, up to permutation,
 $\F=\binom{[2k-1]}k$.
\end{enumerate}
\end{theorem}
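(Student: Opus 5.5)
The plan is to handle parts (1) and (2) separately, since they rest on different tools. For part (1), with $n\ge2k+1$, we run the same boundary-pull machinery that proves Theorem~\ref{thm:first-candidate}, now with $t=1$. Fix an intersecting $\F$ with maximal spectral radius and a Perron vector $x$. We may assume $\F$ is maximal intersecting, since adding edges does not decrease $P_\F$. After relabelling we also assume $x_1\ge x_2\ge\cdots\ge x_n$. Applying $(i,j)$-shifts with $i<j$ and $x_i\ge x_j$ does not decrease $P_\F(x)$ and preserves the intersecting property. So we may assume $\F$ is shifted, and it therefore has a generating family of sets with small maximal element.

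Let $s$ be the largest element appearing in a minimal generator. If $s>2$, i.e.\ $\F$ is not contained in the star at $1$, we pull all boundary layers at $s$ simultaneously. Each generator $G$ with $\max G=s$ is replaced by $G\setminus\{s\}$, and the incompatible pairs are removed. We then compare $P$ before and after this step using Perron tail symmetrization: averaging the coordinates $x_s,\ldots,x_n$ does not decrease the value for the new family. This is legitimate because the new family is symmetric on $\{s,\ldots,n\}$, and its polynomial restricted to those variables is a sum of elementary symmetric polynomials, hence Schur-concave there. The boundary inequality reduces to comparing weighted sizes of two layers, which is exactly where $n\ge2k+1$ (i.e.\ $n>2k-t$) enters.

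Iterating this lands at some $\A_r$, so $\rho(\F)\le\max_r\rho_r$. The remaining task is to show $\rho_0\ge\rho_r$ for all $r$ when $n\ge2k+1$, using the one-variable formula of Proposition~\ref{prop:candidate-orbit-formula}. I expect this comparison to be the main obstacle: for $t=1$ the inequality $\rho_0>\rho_1$ is tight near $n=2k+1$, and it becomes an equality at $(n,k)=(5,2)$. There $\rho_0=\rho(K_{1,4})=2\cdot\tfrac12\cdot 4^{1/2}=2$ and $\rho(K_3)=2$, which accounts for the exceptional case.

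For uniqueness, we trace the equality cases. Every pull must have been an equality, which forces $x$ to be constant on the tail and forces the layer comparison to be tight. This can only happen when $\rho_0=\rho_1$, i.e.\ $(n,k)=(5,2)$. In that case we check directly that, among intersecting graphs on $5$ vertices, only stars and triangles exist, and only $K_{1,4}$ and $K_3$ attain the value $2$.

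For part (2), with $n=2k$, we use a different bound. An intersecting family contains at most one of each complementary pair, so $|\F|\le\binom{2k-1}{k-1}=\binom{2k-1}{k}$. The Bai--Lu theorem~\cite{BaiLu} states that among $k$-graphs with $\binom{m}{k}$ edges the spectral radius is maximized exactly by $K^{(k)}_m$. Since the spectral radius is monotone in the edge set, we get $\rho(\F)\le\rho(\binom{[2k-1]}k)=\binom{2k-2}{k-1}$, with equality only for a clique on $2k-1$ vertices. For $k=2$ we instead invoke Stanley's bound $\rho\le(-1+\sqrt{1+8m})/2$ with $m\le3$, which gives $\rho\le2$, attained only by $K_3$. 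Finally, $\binom{[2k-1]}k$ is indeed intersecting, which completes part (2).
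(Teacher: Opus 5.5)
Your architecture matches the paper's: a simultaneous boundary pull with Perron tail symmetrization plus an ordering of the Frankl radii for $n\ge2k+1$, and EKR plus Bai--Lu/Stanley for $n=2k$. Part (2) is essentially the paper's argument. The one thing missing there is the equality case when $|\F|<\binom{2k-1}{k}$. Pad $\F$ to $\binom{2k-1}{k}$ edges, let Bai--Lu's equality case force the padded family to be the clique, and then apply strict Perron--Frobenius monotonicity to the weakly irreducible clique. The paper gets this at once from the strictly increasing $f_k$.

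Part (1), however, omits exactly the steps that carry the proof.
\begin{itemize}
\item \emph{The selection rule.} ``Incompatible pairs are removed'' is not a rule, and the paper stresses that a layerwise pull can lower $\rho$. What works has two parts.
  \begin{itemize}
  \item From each complementary rank pair $p+q=s+1$, keep one \emph{whole} rank. This is justified by the product inequality $(g_u-1)(g_v-1)>1$ of Lemma~\ref{lem:profitable-off-middle}, and it is where $N=n-s\ge u+v+2$, i.e.\ $n\ge2k+1$, enters.
  \item The self-paired middle rank $r=(s-1)/2$ is not a comparison of two layers at all. There you keep only the generators omitting a common coordinate $a_0$, using $\delta_{a_0}\ge\max\{1/2,1/r\}$ from Lemma~\ref{lem:frequent-omission}. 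The $1/r$ part needs $\rho(\F)>\rho_r$, or else $\F\subseteq\A_r$; this dichotomy is what would justify your claim that iteration ``lands at some $\A_r$''.
  \item The middle rank also needs the $t=1$ gain estimate $g_{k-r-1}>\min\{r,2\}$ of Lemma~\ref{lem:t-one-gain}. The diagonal step of Section~\ref{sec:proof} rests on Lemma~\ref{lem:middle-envelope}, which requires $t\ge2$, so ``the same machinery with $t=1$'' does not apply verbatim.
  \end{itemize}
\item \emph{The boundary comparison.} What you call ``comparing weighted sizes of two layers'' is really the exact factor $\Gamma_q(\eta)$. It relates the new trace classes at the symmetrized vector $y$ to the old boundary classes at $x$. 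Its $\eta$-free minimum $g_q$ is what makes the comparison uniform. At fixed $x$ the gain is only $1+\frac{n-s-q}{q+1}\eta$, with no control on $\eta$.
\item \emph{The Frankl ordering.} You leave $\rho_0\ge\rho_r$ open as ``the main obstacle.'' The paper proves it in Proposition~\ref{prop:t-one-sequence} via the adjacent pull $\A_r\to\A_{r-1}$ of Lemma~\ref{lem:adjacent-pull}. That pull needs $g_{k-r-1}>B_{1,r}=2$: Lemma~\ref{lem:t-one-gain} gives this for $r\ge2$, and a separate estimate handles $r=1$, $k\ge3$. The case $k=2$ is the direct comparison of $\sqrt{n-1}$ with $2$. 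This is also what shows that $(5,2)$ is the only case with $\rho_0=\rho_1$.
\end{itemize}

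Your uniqueness argument also fails as stated. ``Every pull was an equality, so $x$ is constant on the tail'' cannot be traced back to the original family. Shifting and completing to a maximal family are not injective on isomorphism types. A non-star extremal family could a priori shift to a star, or be a proper subfamily of one, and then no pull occurs at all. The paper closes this inside Proposition~\ref{prop:perron-shift}, using Lemma~\ref{lem:shift-rigidity} (valid because $n\ge2k+1=2k-t+2$) and Lemma~\ref{lem:star-strict-monotonicity}. This produces a maximal, left-compressed, \emph{non-star} extremal family of minimal support. Under strict dominance, the pull of such a family is strictly profitable whenever $s>1$, because each boundary trace class has positive value at the Perron vector. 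That gives the contradiction. A minor point: the non-star condition is $s>1$, not $s>2$.
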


The main methodological point is that our pull is not the
Ahlswede--Khachatrian pull transplanted to a spectral objective.  Their
cardinality argument can compare boundary layers separately.  In the spectral
problem, however, deleting the last support coordinate changes the Perron
weights and the contributions of all trace classes simultaneously; a pull
that is profitable on one layer need not compensate for the losses on the
others.  Consequently, the Ahlswede--Khachatrian layerwise pull does not
directly provide a monotone operation for spectral radius.

We overcome this obstruction with a \emph{simultaneous boundary pull}.
Starting from a minimal generating antichain supported on $[s]$, an exact
boundary-trace decomposition records every contribution involving the last
support coordinate.  The boundary ranks are organized into complementary
orbits: from each off-diagonal pair we select a profitable rank, while the
diagonal rank requires a separate common-omission selection.  All selected
shadows are then pulled at once, which both preserves $t$-intersection and
compensates for every discarded boundary layer.  At the same time, Perron
tail symmetrization redistributes the $\ell_k$-mass of
$x_s,x_{s+1},\ldots,x_n$ uniformly over the new tail.  The resulting global
polynomial ledger proves that the generated family on $[s-1]$ has no smaller
spectral radius.  Iterating this support reduction converts the comparison
among the Frankl families into a bound for every $t$-intersecting family.  Its
strict form also yields the uniqueness assertion in
Theorem~\ref{thm:first-candidate}.

The paper is organized as follows.  Section~\ref{sec:generators} introduces the
generating-family reduction, exact trace decomposition and Perron
symmetrization.  Section~\ref{sec:numerics} derives the one-variable formula
for the Frankl-family radii and proves the required monotonicity.
Section~\ref{sec:proof} constructs the simultaneous pull and deduces
Theorem~\ref{thm:first-candidate} and
Corollary~\ref{cor:explicit-threshold}.  Section~\ref{sec:t-one} treats
$t=1$, and Section~\ref{sec:conjecture} states the spectral complete
intersection conjecture.

\section{Generating families and Perron symmetrization}\label{sec:generators}

\subsection{Generating antichains and boundary pairs}

For $T\subseteq[n]$ with $|T|\le k$, let
\[
 \D(T):=\left\{F\in\binom{[n]}k:T\subseteq F\right\}.
\]
We call $T$ a \emph{generator} of $\F$ if $\D(T)\subseteq\F$. Let $\G$ be the set of inclusion-minimal generators of $\F$. Then $\G$ forms an antichain, and we call $\G$  the
\emph{minimal generating antichain}. Then
\[
 \F=\bigcup_{G\in\G}\D(G).
\]
A set $T\subseteq[n]$ satisfying $|T\cap F|\ge t$ for every $F\in\F$ is called a
\emph{$t$-transversal} of $\F$.

We first collect the standard generator facts.  Their usual proofs are part of
the classical generating-family method; the short extension observation below
explains why the hypothesis $n>2k-t$ is the correct one.

\begin{lemma}\label{lem:standard-generator}
Let $A\subseteq[n]$, $|A|\le k$, and $F\in\binom{[n]}k$.  Then
\begin{equation}\label{eq:extension}
 \min_{H\in\D(A)}|H\cap F|=\max\{|A\cap F|,\,2k-n\}.
\end{equation}
Consequently, when $n>2k-t$ the following statements hold.
\begin{enumerate}[label=\textup{(\arabic*)},leftmargin=2.2em]
 \item Every generator of a $t$-intersecting family is a $t$-transversal.
 \item If $\F$ is inclusion-maximal subject to being $t$-intersecting, then
 a set is a generator if and only if it is a $t$-transversal.
 \item The minimal generating antichain of a $t$-intersecting family is itself
 $t$-intersecting.
\end{enumerate}
\end{lemma}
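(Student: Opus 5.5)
We first prove the extension identity \eqref{eq:extension} by a direct construction, then deduce (1)--(3) from it. Fix $A$ with $|A|\le k$ and $F\in\binom{[n]}k$. Any $H\in\D(A)$ satisfies two lower bounds. Since $A\subseteq H$, we have $|H\cap F|\ge|A\cap F|$. Since $|H|=|F|=k$ inside $[n]$, we have $|H\cap F|\ge 2k-n$. So the minimum is at least the right-hand side of \eqref{eq:extension}.

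For the reverse inequality, build $H$ by adding $k-|A|$ elements to $A$, taking them first from $[n]\setminus(A\cup F)$ and only afterwards from $F\setminus A$. The set $[n]\setminus(A\cup F)$ has size $n-|A|-k+|A\cap F|$.
\begin{itemize}
\item If this size is at least $k-|A|$, then $H\cap F=A\cap F$.
\item Otherwise, the number of elements forced into $F\setminus A$ is $(k-|A|)-(n-|A|-k+|A\cap F|)=2k-n-|A\cap F|$. Hence $|H\cap F|=2k-n$.
\end{itemize}
We must check that enough elements exist. This holds because $|F\setminus A|=k-|A\cap F|$ and $|[n]\setminus A|\ge k-|A|$. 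In either case $|H\cap F|$ equals the maximum in \eqref{eq:extension}.

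\textbf{(1).} Let $T$ be a generator of a $t$-intersecting $\F$ and let $F\in\F$. Every $H\in\D(T)$ lies in $\F$, so $|H\cap F|\ge t$. By \eqref{eq:extension}, $\max\{|T\cap F|,2k-n\}\ge t$. Because $n>2k-t$ gives $2k-n<t$, this forces $|T\cap F|\ge t$.

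\textbf{(2).} One direction is (1). For the other, let $T$ be a $t$-transversal with $|T|\le k$. Every $H\supseteq T$ satisfies $|H\cap F|\ge|T\cap F|\ge t$ for all $F\in\F$. We also need $\D(T)$ itself to be $t$-intersecting. For $H,H'\in\D(T)$ we have $|H\cap H'|\ge|T|$, so this requires $|T|\ge t$. This follows because $T$ meets some member of $\F$ in at least $t$ elements, assuming $\F\neq\emptyset$. Then $\F\cup\D(T)$ is $t$-intersecting, and maximality gives $\D(T)\subseteq\F$, so $T$ is a generator.

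\textbf{(3).} Let $G,G'\in\G$. By (1), $G'$ is a $t$-transversal of $\F$. Extend $G$ to some $H\in\D(G)\subseteq\F$. Applying \eqref{eq:extension} with $A=G$ and $F$ replaced by $G'$ is not directly possible, since $G'$ need not have size $k$. Instead, use the transversal property for every $H\in\D(G)$: we have $|H\cap G'|\ge t$ for all such $H$. We then choose $H$ to avoid $G'\setminus G$ as far as possible, mimicking the construction above. The complement of $G\cup G'$ has size $n-|G\cup G'|\ge n-2k+|G\cap G'|$. This is at least $k-|G|$ unless the count is tight, and there $n>2k-t$ again handles it. So we can take $H$ with $H\cap G'=G\cap G'$, which gives $|G\cap G'|\ge t$.

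The main obstacle is this last counting step. It needs some care when $|G|,|G'|<k$, and it requires checking that the bound $n>2k-t$ yields enough free room, specifically $n-|G\cup G'|\ge k-|G|$ whenever $|G\cap G'|<t$.
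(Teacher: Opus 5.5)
Your proofs of the extension identity, of (1), and of (2) are correct and follow the paper's argument. In (2) the check $|T|\ge t$ is fine, though adding the members of $\D(T)$ to $\F$ one at a time makes it unnecessary. The gap is in (3). Your plan is the right one and matches the paper's ``apply (1) to one generator and the extension observation to the other'': use (1) to make $G'$ a $t$-transversal, then extend $G$ greedily away from $G'$. The counting you propose, however, is false.

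You cannot in general choose $H\in\D(G)$ with $H\cap G'=G\cap G'$. That requires $n-|G\cup G'|\ge k-|G|$, i.e.\ $n\ge k+|G'|-|G\cap G'|$. When $|G'|=k$ this reads $n\ge 2k-|G\cap G'|$, which $n>2k-t$ guarantees only when $|G\cap G'|=t-1$. Since you argue by contradiction from $|G\cap G'|<t$, you must also handle $|G\cap G'|\le t-2$. For example, take $t=3$, $k=4$, $n=6$, $G'=\{1,2,3,4\}$, $G=\{5\}$. Then $[n]\setminus(G\cup G')$ has one point while $k-|G|=3$. So the inequality you single out as ``the main obstacle'' is simply not true. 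The lossy bound $n-|G\cup G'|\ge n-2k+|G\cap G'|$ (here equal to $-2$) cannot rescue it, and ``unless the count is tight, and there $n>2k-t$ again handles it'' is not an argument.

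The fix is to drop the demand $H\cap G'=G\cap G'$ and let the greedy extension spill into $G'\setminus G$ once $[n]\setminus(G\cup G')$ is exhausted. Exactly as in your proof of the identity, with the $k$-set $F$ replaced by the set $G'$ of size at most $k$, this produces $H\in\D(G)$ with
\[
 |H\cap G'|=\max\{|G\cap G'|,\,k+|G'|-n\}\le\max\{|G\cap G'|,\,2k-n\}.
\]
In the overflow case $H\supseteq[n]\setminus G'$, so $|H\cap G'|=k-(n-|G'|)$. Since $2k-n<t$, the assumption $|G\cap G'|<t$ gives $|H\cap G'|<t$ with $H\in\F$. This contradicts the fact that $G'$ is a $t$-transversal by (1). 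In the example above one gets $|H\cap G'|=2<3$. With this form of the extension identity for an arbitrary target set of size at most $k$, part (3) goes through.
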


\begin{proof}
To obtain~\eqref{eq:extension}, extend $A$ first with points outside $A\cup F$;
points of $F\setminus A$ are forced only after all such points have been used.
If $|A\cap F|<t$, the assumption $n>2k-t$ therefore supplies an extension
$H\supseteq A$ with $|H\cap F|<t$.  This proves (1).  Conversely, if $T$ is a
$t$-transversal, then every $k$-set containing $T$ can be added while
preserving $t$-intersection, which proves (2) by maximality.  Finally, apply
(1) to one generator and the extension observation to the other to obtain
(3).  These are the standard generating-family arguments; see, for example,
\cite{AK,FranklTokushige}.
\end{proof}

For a generating antichain $\G$, define its \emph{support length} by
\[
 \sigma(\G):=\max\{i:i\in\cup_{G\in\G}G\}.
\]
Thus every generator is contained in $[\sigma(\G)]$, and $\sigma(\G)$ is the
largest coordinate needed to support the generating antichain.

We will use two elementary rigidity facts when passing to a compressed
extremal structure.  The first is the strict Perron--Frobenius monotonicity of a full
star; the second says that, one step beyond the boundary $n=2k-t+1$, shifting
cannot turn a genuinely different family into a full star.

We will repeatedly use the following Perron--Frobenius comparison theorem.
If $0\le A\le B$ entrywise for two nonnegative tensors, then
$\rho(A)\le\rho(B)$; if, in addition, $B$ is weakly irreducible and $A\ne B$,
then $\rho(A)<\rho(B)$~\cite[Theorem~3.4]{KannanShakedBerman}.  For an
adjacency tensor, weak irreducibility is equivalent to weak connectivity of
the corresponding uniform hypergraph.

\begin{lemma}\label{lem:star-strict-monotonicity}
Let $1\le t<k$,  $T\in\binom{[n]}t$ and 
$\mathcal B\subsetneq\D(T)$.  Then
\[
 \rho(\mathcal B)<\rho(\D(T)).
\]
\end{lemma}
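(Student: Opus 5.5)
Apply the Perron--Frobenius comparison theorem quoted just before the statement (\cite[Theorem~3.4]{KannanShakedBerman}) with $A$ the adjacency tensor of $\mathcal B$ and $B$ that of $\D(T)$, both viewed on the vertex set $[n]$. Since $\mathcal B\subseteq\D(T)$, we have $0\le A\le B$ entrywise, and $A\ne B$ because the inclusion is proper. The strict inequality then follows once we know that the adjacency tensor of $\D(T)$ is weakly irreducible, i.e.\ that the hypergraph $\D(T)$ is weakly connected.

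The one genuine point is the choice of vertex set. If $\D(T)$ has isolated vertices in $[n]$, its tensor is reducible and the comparison theorem does not apply directly. So I will check that $\D(T)$ covers every vertex of $[n]$ and is connected. For $i\in T$ this is immediate. For $i\notin T$, the set $T\cup\{i\}$ extends to a $k$-set because $|T|+1=t+1\le k$; extending inside $[n]$ is possible since $n\ge k$ (implicitly $n\ge k$, otherwise $\D(T)$ is empty, and then $\mathcal B\subsetneq\D(T)$ is impossible). Every edge contains $T\neq\emptyset$ because $t\ge1$. Hence any two vertices are joined through $T$: either they lie in a common edge, or through two edges sharing $T$. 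This gives weak connectivity, hence weak irreducibility by the equivalence recalled before the lemma.

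For robustness I would also give a direct argument that avoids citing irreducibility. Let $x$ be a Perron vector of $\mathcal B$. By the symmetry of $\D(T)$ under permutations fixing $T$ and under permutations of $T$, together with the concavity structure, one could instead argue as follows. If $\rho(\mathcal B)=\rho(\D(T))$, then $x$ is also a maximizer for $P_{\D(T)}$, since $P_{\mathcal B}\le P_{\D(T)}$. Maximizers of $P_{\D(T)}$ on the sphere are positive: a zero coordinate could be raised to first order while the corresponding decrease elsewhere costs only second order, because every vertex lies in some edge and the gain is $k$th-root order. With $x>0$, any edge $F\in\D(T)\setminus\mathcal B$ contributes $k\prod_{i\in F}x_i>0$, so $P_{\D(T)}(x)>P_{\mathcal B}(x)=\rho(\D(T))$. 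This contradicts the maximality of $\rho(\D(T))$.

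The main obstacle is exactly this positivity of the Perron vector of $\D(T)$, equivalently its weak irreducibility. I would settle it by the connectivity check above and the cited theorem.
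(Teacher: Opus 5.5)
Your main argument is correct and is essentially the paper's proof: every $i\notin T$ lies in an edge containing $T\cup\{i\}$, so $\D(T)$ is weakly connected and hence weakly irreducible, and the strict part of the cited comparison theorem then gives $\rho(\mathcal B)<\rho(\D(T))$. The supplementary ``direct'' argument is not needed. If you keep it, the positivity step needs more than ``every vertex lies in some edge'': you need an edge through the zero vertex whose other coordinates are positive. You can obtain such an edge by swapping a non-$T$ vertex of an edge with positive weight, which exists since $k-t\ge1$.
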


\begin{proof}
The hypergraph $\D(T)$ is weakly connected: every vertex outside $T$ occurs
in an edge together with all vertices of $T$.  Its adjacency tensor is
therefore weakly irreducible.  Since the adjacency tensor of $\mathcal B$ is
entrywise at most and not equal to that of $\D(T)$, the strict part of the
comparison theorem above gives the claim.
\end{proof}

\begin{lemma}\label{lem:shift-rigidity}
Let $1\le t<k$ and $n\ge2k-t+2$.  If an ordinary $(i,j)$-shift of a
$t$-intersecting family $\mathcal B\subseteq\binom{[n]}k$ is a full
$t$-star, then $\mathcal B$ itself is a full $t$-star, possibly with a
different center.
\end{lemma}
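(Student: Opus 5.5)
The plan is to compare $\mathcal B$ with its shift $\mathcal S:=S_{ij}(\mathcal B)=\D(T)$ directly. I will show that $\mathcal B$ equals $\D(T)$ or $\D(T')$ with $T'=(T\setminus\{i\})\cup\{j\}$, using only that $\mathcal B$ is $t$-intersecting, that the shift is a bijection $\mathcal B\to\mathcal S$, and that a Kneser graph is connected. No cardinality-extremality result is needed. This matters because for $n$ below the Ahlswede--Khachatrian threshold the full $t$-star need not be the unique maximum-size family, so we cannot simply appeal to uniqueness for $|\mathcal B|=\binom{n-t}{k-t}$.

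\emph{Step 1 (reduction to one configuration).} Recall that $F\in\mathcal B$ is moved only when $j\in F$, $i\notin F$ and $G:=(F\setminus\{j\})\cup\{i\}\notin\mathcal B$; the image of $F$ is then $G$. An unmoved $F$ equals its image, so $F\in\D(T)$. Now take a moved $F$ whose image $G$ contains $T$. If $j\in T$ this is impossible, since $j\notin G$. If $i\notin T$, then $F\cap T=G\cap T$, so $F\in\D(T)$ already. Hence $\mathcal B=\D(T)$ unless $i\in T$ and $j\notin T$, which we now assume. Write $\mathcal B=\mathcal B_1\cup\mathcal B_2$, where $\mathcal B_1=\mathcal B\cap\D(T)$ and $\mathcal B_2=\mathcal B\setminus\D(T)$. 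Then $\mathcal B_2\subseteq\D(T')$ and every member of $\mathcal B_2$ avoids $i$. Let $\mathcal M:=\D(T)\setminus\mathcal B_1$. Bijectivity of the shift onto $\D(T)$ gives $\mathcal M=\{(F\setminus\{j\})\cup\{i\}:F\in\mathcal B_2\}$. In particular every member of $\mathcal M$ avoids $j$, so $\mathcal B_1\supseteq\D(T\cup\{j\})$.

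\emph{Step 2 (encoding by a Kneser graph).} Put $Y:=[n]\setminus(T\cup\{j\})$, so $|Y|=n-t-1$. Sets $G\in\D(T)$ with $j\notin G$ correspond bijectively to $X=G\setminus T\in\binom{Y}{k-t}$. Similarly, sets $F\in\D(T')$ with $i\notin F$ correspond to $X=F\setminus T'\in\binom Y{k-t}$. Let $\mathcal X\subseteq\binom Y{k-t}$ encode $\mathcal M$; then it also encodes $\mathcal B_2$. Take $F\in\mathcal B_2$ with code $X$ and $G\in\mathcal B_1$ with $j\notin G$ and code $X'$. Then $|F\cap G|=(t-1)+|X\cap X'|$, so $t$-intersection forces $X\cap X'\ne\emptyset$. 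Equivalently, if $X\in\mathcal X$ and $X'$ is disjoint from $X$, then $X'\in\mathcal X$. Thus $\mathcal X$ is a union of connected components of the Kneser graph $K(n-t-1,k-t)$. This graph is connected when $n-t-1\ge2(k-t)+1$, that is, when $n\ge2k-t+2$; this is exactly where the hypothesis enters.

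\emph{Step 3 (conclusion).} By connectivity, $\mathcal X=\emptyset$ or $\mathcal X=\binom Y{k-t}$. In the first case $\mathcal B=\D(T)$. In the second case $\mathcal B_1=\D(T\cup\{j\})$ and $\mathcal B_2=\{F\in\D(T'):i\notin F\}$, so $\mathcal B=\D(T')$, a full $t$-star with center $T'$.

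I expect the main obstacle to be the bookkeeping in Steps 1 and 2. One must check that moved sets and missing sets correspond exactly and that the reduction to the case $i\in T$, $j\notin T$ is exhaustive. The connectivity of $K(m,\ell)$ for $m\ge2\ell+1$ is standard and should be quoted rather than reproved. When $k-t=1$ the Kneser graph is complete on $n-t-1\ge3$ vertices, so that case needs no separate treatment.
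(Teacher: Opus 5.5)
Your proof is correct and follows essentially the same route as the paper. Both arguments reduce to the case $i\in T$, $j\notin T$ and record, for each $X\in\binom{[n]\setminus(R\cup\{i,j\})}{k-t}$ with $R=T\setminus\{i\}$, which of $R\cup X\cup\{i\}$ and $R\cup X\cup\{j\}$ lies in $\mathcal B$. Both then observe that $t$-intersection forbids disjoint sets of opposite type, and conclude from connectivity of the Kneser graph when $n-t-1\ge2(k-t)+1$. Your bookkeeping via $\mathcal B_1,\mathcal B_2,\mathcal M$, including the explicit inclusion $\D(T\cup\{j\})\subseteq\mathcal B$, spells out what the paper leaves implicit in its ``two-member shift pair'' formulation.
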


\begin{proof}
Write the shifted family as $\D(T)$, where $|T|=t$.  The output of an
$(i,j)$-shift is $(i,j)$-shifted, so the case $j\in T$ and $i\notin T$ is
impossible.  If $i,j$ are both in $T$ or both outside $T$, the membership of
each two-member shift pair is forced by $\D(T)$, and hence
$\mathcal B=\D(T)$.

It remains to consider $i\in T$ and $j\notin T$.  Let
$
 R:=T\setminus\{i\}.
$
For each $X\in\binom {[n]\setminus(R\cup\{i,j\})} {k-t}$, exactly one of
\[
 R\cup X\cup\{i\},\qquad R\cup X\cup\{j\}
\]
belongs to $\mathcal B$; these are precisely the two possible preimages of
the corresponding shifted edge.  Color each $X\in\binom {[n]\setminus(R\cup\{i,j\})} {k-t}$ by the chosen endpoint $i$ or
$j$.  If disjoint $(k-t)$-sets received opposite colors, the corresponding two
members of $\mathcal B$ would intersect in exactly $R$, of size $t-1$.
Thus no edge of the Kneser graph on $\binom {[n]\setminus(R\cup\{i,j\})} {k-t}$ joins opposite colors.

Now $|[n]\setminus(R\cup\{i,j\})|=n-t-1\ge2(k-t)+1$, and this Kneser graph is connected.  Indeed, two
$(k-t)$-sets differing in one element have a common disjoint $(k-t)$-set, and any
two $(k-t)$-sets are joined by a sequence of single-element exchanges.  Hence
all $(k-t)$-sets have the same color.  The family $\mathcal B$ is consequently
either $\D(T)$ or the full star with center $R\cup\{j\}$.
\end{proof}

The next reduction selects an extremal pair for which maximality, compression,
support minimality, and the ordering of the Perron coordinates are compatible.

\begin{proposition}\label{prop:perron-shift}
Suppose that $n\ge2k-t+2$ and there exists a spectral extremal structure that
is not a full $t$-star among all
$t$-intersecting $k$-uniform families in $[n]$. Then one may choose a non-star spectral extremal
structure $\F$ and a Perron vector $x=(x_1,\ldots,x_n)^T$ of $\F$ such that:
\begin{enumerate}[label=\textup{(\arabic*)},leftmargin=2.2em]
 \item $\F$ is inclusion-maximal and left-compressed;
 \item if $\G$ is the minimal generating antichain of $\F$, then
 $\sigma(\G)$ is smallest among all non-star spectral extremal structures
 satisfying \textup{(1)};
 \item $x_1\ge\cdots\ge x_n\ge0$;
 \item every vertex contained in an edge of $\F$ has a positive coordinate
 in $x$.
\end{enumerate}
\end{proposition}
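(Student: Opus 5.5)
We start from an arbitrary non-star spectral extremal structure $\F_0$ and modify it in three stages: enlarge it to be maximal, compress it along the Perron order, and then minimise the support length. Spectral extremality and the non-star property must survive each stage.

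\emph{Maximality.} Enlarge $\F_0$ to an inclusion-maximal $t$-intersecting family $\F_1\supseteq\F_0$. By entrywise monotonicity $\rho(\F_1)\ge\rho(\F_0)$, so $\F_1$ is extremal. It remains to check that $\F_1$ is not a full $t$-star. If $\F_1=\D(T)$, then $\F_0\subseteq\D(T)$. Since $\F_0$ is not a full star, the inclusion is proper, and Lemma~\ref{lem:star-strict-monotonicity} gives $\rho(\F_0)<\rho(\D(T))$. This contradicts the extremality of $\F_0$.

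\emph{Compression.} Fix a Perron vector $x$ of $\F_1$ and relabel so that $x_1\ge\cdots\ge x_n$. Apply an $(i,j)$-shift with $i<j$, so $x_i\ge x_j$. Replacing $j$ by $i$ in an edge does not decrease the corresponding monomial, so $P_{\mathcal S(\F_1)}(x)\ge P_{\F_1}(x)$. Hence the shifted family has spectral radius at least $\rho(\F_1)$ and is again extremal. Shifting preserves $t$-intersection and cardinality. By Lemma~\ref{lem:shift-rigidity} (this is where $n\ge 2k-t+2$ is used), the shifted family is not a full $t$-star.

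The delicate point is that after a shift we need a Perron vector of the new family that is still ordered compatibly with the next shift. I would handle this by always recomputing a Perron vector, re-permuting the ground set to sort it, and shifting again. The process is driven by a potential that strictly decreases, namely $\sum_{F}\sum_{i\in F} i$ taken in a fixed labelling. A cleaner way to organise this is to work with the set of non-star extremal families themselves: there are finitely many such families up to permutation, and we choose one minimising the potential after sorting its Perron vector. If it were not left-compressed with respect to that ordering, some shift would strictly lower the potential and still produce an extremal non-star family, a contradiction. Maximality is then restored. Because shifting preserves cardinality, the enlargement step must also strictly lower the potential or keep it compatible; I would combine the two by minimising first over maximal families with respect to the lexicographic pair (negative size, potential). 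I expect making this loop terminate while keeping (1) and (3) simultaneously to be the main technical obstacle.

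\emph{Support length and positivity.} Among the resulting families satisfying (1), choose one minimising $\sigma(\G)$. Finiteness guarantees a minimum, and re-running the compression loop within this class does not increase $\sigma$, since shifts move generators leftwards. For (4), if a vertex $v$ in an edge $F$ had $x_v=0$, consider the eigen-equation for the Lagrange stationarity at $v$. Its right-hand side contains $\prod_{u\in F\setminus\{v\}}x_u$. A weak-connectivity argument on the component of $\F$ carrying the Perron mass then shows this term is positive, contradicting $x_v^{k-1}\rho=0$. The component argument works because a maximal $t$-intersecting family is weakly connected: any two edges share $t\ge1$ vertices. Hence the Perron vector can be taken strictly positive on the vertex set of $\F$, by the weakly irreducible Perron--Frobenius theorem.
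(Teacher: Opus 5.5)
\textbf{There is a genuine gap.} It sits where you yourself flag ``the main technical obstacle'': as written, the compression loop is not shown to terminate, and its interaction with (1) and (3) is left open.

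\emph{The compression loop.} The procedure ``recompute a Perron vector, re-permute to sort it, shift again'' does not decrease $\sum_F\sum_{i\in F}i$, because each re-permutation changes the labelling in which the potential is measured. Minimising the potential ``after sorting its Perron vector'' gives a contradiction only if the shifted family still has a sorted Perron vector in the current labelling, and you never establish this. The missing observation is that extremality freezes the Perron vector. If $\F$ is extremal with Perron vector $x$, $x_1\ge\cdots\ge x_n$, and $\F'$ is an $(i,j)$-shift with $i<j$, then
\[
 \rho(\F)\ge\rho(\F')\ge P_{\F'}(x)\ge P_{\F}(x)=\rho(\F),
\]
so the same $x$ is a Perron vector of $\F'$. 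You therefore fix the labelling once, shift repeatedly with the same $x$, and the potential strictly decreases.

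\emph{Maximality.} The paper does not re-enlarge inside a loop. It starts from a non-star extremal family with the maximum number of edges. Shifting preserves size, and such a family is automatically inclusion-maximal: adding an edge gives an extremal family that is not a full star by your own argument with Lemma~\ref{lem:star-strict-monotonicity}, contradicting maximality of size. Your lexicographic pair (negative size, potential) points in this direction, but you did not carry it out.

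\emph{Obtaining (3) after minimising support.} The claim that ``re-running the compression loop within this class does not increase $\sigma$'' misses the real issue. Once you pick the family of minimal support length among those satisfying (1), getting a sorted Perron vector by relabelling the ground set could destroy left-compression and change $\sigma(\G)$; shifts are not the operation at stake. What you need is the coordinate-exchange argument the paper invokes for left-compressed families. Suppose $\F$ is left-compressed, $i<j$ and $x_i<x_j$; then swapping $x_i$ and $x_j$ does not decrease $P_\F(x)$.
\begin{itemize}
 \item Each edge $F\ni j$ with $i\notin F$ pairs with $(F\setminus\{j\})\cup\{i\}\in\F$, and the pair's contribution is symmetric in $x_i,x_j$.
 \item Unpaired edges containing $i$ but not $j$ can only gain.
\end{itemize}
Hence any Perron vector of a left-compressed family can be rearranged into nonincreasing order without relabelling. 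This gives (3) without disturbing (1) or (2).

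\emph{Positivity (4).} Your weak-irreducibility argument is fine in substance. Any maximiser vanishes on isolated vertices, and on the weakly connected nonisolated part it is the unique positive eigenvector. For a left-compressed family the nonisolated vertices form an initial segment, so positivity is compatible with the sorting.
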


\begin{proof}
Let $\rho^*$ be the largest spectral radius among all
$t$-intersecting $k$-uniform families.
We first show that there is an inclusion-maximal, left-compressed non-star
spectral extremal structure. Choose a non-star spectral extremal structure
with the maximum number of edges, and let $x$ be a Perron vector. Apply a
permutation simultaneously to the family and to $x$ so that
\[
 x_1\ge\cdots\ge x_n\ge0.
\]
For $i<j$, the ordinary $(i,j)$-shift preserves $t$-intersection and does not
decrease the spectral objective at $x$, since every monomial that is genuinely replaced has a factor $x_j$
  replaced by $x_i$, where $x_i\ge x_j$. Hence every shifted family
is again spectral extremal. By Lemma~\ref{lem:shift-rigidity}, it remains
non-star. Repeated nontrivial shifts strictly decrease
\[
 \sum_{F\in\F}\sum_{i\in F}i,
\]
so the process terminates at a left-compressed non-star spectral extremal
structure with the same number of edges.

This family is inclusion-maximal. Indeed, if an edge could be added while
preserving $t$-intersection, the resulting family would have spectral radius
at least $\rho^*$ and hence would again be spectral extremal. It cannot be a
full $t$-star, since the original family would then be a proper subfamily of
that star with the same spectral radius, contradicting
Lemma~\ref{lem:star-strict-monotonicity}. This contradicts the maximal choice
of the number of edges.

Now we choose, among all inclusion-maximal, left-compressed non-star spectral
extremal structures, a $t$-intersecting $k$-uniform family $\F$ whose minimal generating antichain $\G$
has the smallest support length. Set
\[
 s:=\sigma(\G).
\]
This gives \textup{(1)} and \textup{(2)}. Choose a Perron vector $x$ of
$\F$. Since $\F$ is left-compressed, the standard coordinate-exchange
argument allows $x$ to be chosen so that
\[
 x_1\ge\cdots\ge x_n\ge0,
\]
proving \textup{(3)}.

Finally, every vertex contained in an edge of $\F$ has a positive coordinate
in $x$. Indeed, the nonisolated part of $\F$ is weakly connected, since any
two edges of a $t$-intersecting family intersect. The Perron--Frobenius
theorem therefore gives a Perron vector that is strictly positive on every
nonisolated vertex. Extending it by zero on the isolated vertices and then
applying the preceding coordinate exchanges preserves this positivity.
Hence
\[
 i\in F\ \text{for some }F\in\F
 \qquad\Longrightarrow\qquad
 x_i>0,
\]
which proves \textup{(4)}.
\end{proof}

Fix a pair $(\F,x)$ as in Proposition~\ref{prop:perron-shift}. Write
$s=\sigma(\G)$ and set
\[
 \G^{\partial}:=\{G\in\G:s\in G\},
 \qquad
 \G_i^{\partial}:=\{G\in\G^{\partial}:|G|=i\},
\]where $\G$ is the minimal generating antichain of $\F$.
The superscript $\partial$ indicates the boundary at the last support
coordinate.

The last support coordinate forces every boundary generator to have a tight
partner, and the two ranks are complementary with respect to $s+t$.

\begin{lemma}\label{lem:tight-pairs}
For every $A\in\G^{\partial}$ there is $B\in\G^{\partial}$ such that
$|A\cap B|=t$.  Moreover, whenever $A,B\in\G^{\partial}$ satisfy
$|A\cap B|=t$, one has
\begin{equation}\label{eq:tight-union}
 A\cup B=[s],
 \qquad |A|+|B|=s+t.
\end{equation}
Consequently,
\begin{equation}\label{eq:rank-pairing}
 \G_i^{\partial}\ne\emptyset
 \quad\Longrightarrow\quad
 \G_{s+t-i}^{\partial}\ne\emptyset 
 \qquad \mbox{and}\qquad s\le2k-t.
\end{equation}
\end{lemma}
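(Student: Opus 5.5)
The plan is to prove the existence of a tight partner using the minimality of the support length $s=\sigma(\G)$ from Proposition~\ref{prop:perron-shift}(2), together with left-compression and maximality, and then to derive \eqref{eq:tight-union} by a swapping argument. For the first claim, suppose some $A\in\G^{\partial}$ satisfies $|A\cap B|\ge t+1$ for every $B\in\G^{\partial}$. Set $A':=A\setminus\{s\}$. I would check that $A'$ is still a $t$-transversal of $\F$. Every member of $\F$ contains some generator $G\in\G$, so it suffices to show $|A'\cap G|\ge t$ for every $G\in\G$. Lemma~\ref{lem:standard-generator}(3) gives $|A\cap G|\ge t$ for all $G\in\G$. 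If $s\notin G$, then $|A'\cap G|=|A\cap G|\ge t$. If $s\in G$, then $G\in\G^{\partial}$ and $|A\cap G|\ge t+1$, so $|A'\cap G|\ge t$. Thus $A'$ is a $t$-transversal. By Lemma~\ref{lem:standard-generator}(2) and inclusion-maximality, $A'$ is then a generator, which contradicts the minimality of $A$ in the antichain $\G$. Note that $A\ne\{s\}$ because $|A|\ge t\ge1$, and the case $|A|=t$ is impossible since then $|A\cap B|\ge t+1$ fails for $B=A$. So some $B\in\G^{\partial}$ has $|A\cap B|=t$. I expect $B=A$ to be allowed here, in which case $|A|=t$.

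For the second claim, take $A,B\in\G^{\partial}$ with $|A\cap B|=t$. Suppose some $j\in[s]\setminus(A\cup B)$ exists. Then $j<s$, since $s\in A$. Because $\F$ is left-compressed, the minimal generating antichain is also left-shifted: replacing $s$ by a smaller element outside a generator yields a generator. This is the step I would verify carefully. For any $F\in\D(A)$ one can shift, and this can be phrased as $\D((A\setminus\{s\})\cup\{j\})\subseteq\F$, using left-compression of $\F$ applied to each $k$-set, with a case split according to whether $s$ lies in the $k$-set. Now put $A^*:=(A\setminus\{s\})\cup\{j\}$, which is a generator, and therefore contains some $G\in\G$. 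Then $|A^*\cap B|=t-1$, because $s$ is lost and $j\notin B$. But $A^*$ contains a generator, and generators are $t$-transversals, so Lemma~\ref{lem:standard-generator}(1) together with the extension formula \eqref{eq:extension} (using $n>2k-t$) applied to $A^*$ and $B$ yields a member of $\F$ meeting another member in fewer than $t$ points. This is a contradiction, so $A\cup B=[s]$. Then $|A|+|B|=|A\cup B|+|A\cap B|=s+t$.

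The consequences follow directly. Given $A\in\G^{\partial}_i$, its partner $B$ lies in $\G^{\partial}_{s+t-i}$. Since all generators have size at most $k$, we get $s+t=|A|+|B|\le 2k$, that is, $s\le 2k-t$. The main obstacle is the shifted-generator step, namely showing that $\D(A^*)\subseteq\F$ follows from left-compression of $\F$. If the direct argument is awkward, I would instead argue through the transversal characterization: show that $A^*$ is a $t$-transversal, since compression makes every member of $\F$ intersect $A^*$ at least as well as it intersects $A$ after a shift. Then Lemma~\ref{lem:standard-generator}(2) makes $A^*$ a generator.
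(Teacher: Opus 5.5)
Your proof is correct and matches the paper's argument in substance: the first claim is the contrapositive of the paper's step (the paper takes $F\in\F$ with $|(A\setminus\{s\})\cap F|<t$ and uses a minimal generator $B\subseteq F$ as the tight partner), and the second uses the same left-compression shift with the same case split on whether $s$ lies in the $k$-set, except that you shift $A$ where the paper shifts $B$. Two small remarks: the minimality of $s$ is never actually used, and once $A^*=(A\setminus\{s\})\cup\{j\}$ is a generator you can finish directly. A minimal generator $G\subseteq A^*$ has $|G\cap B|\le t-1$, contradicting Lemma~\ref{lem:standard-generator}(3), so there is no need to rerun the extension argument.
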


\begin{proof}
Let $A\in\G^{\partial}$ and $A^-=A\setminus\{s\}$.  If $A^-$ is a $t$-transversal of $\F$, then it would be
a generator by Lemma~\ref{lem:standard-generator} and would contain a minimal
generator properly contained in $A$, contradicting the antichain property.
Choose $F\in\F$ with $|A^-\cap F|<t$.  Since $A$ is a $t$-transversal, we must
have $|A\cap F|=t$ and $s\in F$.  A minimal generator $B\subseteq F$ then
satisfies $|A\cap B|=t$ and necessarily contains $s$.

Now suppose $A,B\in\G^{\partial}$ and $|A\cap B|=t$.  If
$u\in[s]\setminus(A\cup B)$, let $B'=(B\setminus\{s\})\cup\{u\}$.  Every
$k$-set containing $B'$ belongs to $\F$: when it contains $s$ it also contains
$B$, and otherwise it is obtained by left-compressing a member containing
$B$.  Thus $B'$ is a generator and contains some $C\in\G$.  But
$|A\cap C|\le|A\cap B'|=t-1$, contradicting
Lemma~\ref{lem:standard-generator}(3).  Hence $A\cup B=[s]$, and the remaining
claims follow from the inclusion--exclusion principle.
\end{proof}

\subsection{Exact boundary traces and Perron tail symmetrization}

For a family $\mathcal B\subseteq\binom{[n]}k$ and a set $S\subseteq[n]$,
define the \emph{trace of $\mathcal B$ on $S$} by
\[
 \operatorname{Tr}_S(\mathcal B)
 :=\{B\cap S:B\in\mathcal B\}.
\]
For $A\subseteq S$, the \emph{trace class of $\mathcal B$ on $S$
corresponding to $A$} is
\begin{equation}\label{eq:trace-class-definition}
 \mathcal B_S(A):=\{B\in\mathcal B:B\cap S=A\}.
\end{equation}
Thus $\mathcal B_S(A)\ne\emptyset$ exactly when
$A\in\operatorname{Tr}_S(\mathcal B)$, and
\[
 \mathcal B
 =\dotcupbig_{A\in\operatorname{Tr}_S(\mathcal B)}\mathcal B_S(A).
\]
We also write
\[
 \C_S(A):=\left\{F\in\binom{[n]}k:F\cap S=A\right\}
 =\left\{A\cup Z:Z\in\binom{[n]\setminus S}{k-|A|}\right\},
\]
so that $\mathcal B_S(A)=\mathcal B\cap\C_S(A)$.  When $S=[m]$, we
abbreviate $\operatorname{Tr}_{[m]}$, $\mathcal B_{[m]}(A)$, and
$\C_{[m]}(A)$ to $\operatorname{Tr}_m$, $\mathcal B_m(A)$, and
$\C_m(A)$, respectively.

Retain the pair $(\F,x)$ fixed above, and let $\G$ be the minimal generating
antichain of $\F$.  Write
\[
 \G^{\circ}:=\{G\in\G:s\notin G\},
 \qquad
 \F^{\circ}:=\bigcup_{G\in\G^{\circ}}\D(G).
\]
For any subcollection $\mathcal S\subseteq\G^{\partial}$, define the
generating family and the $k$-uniform family generated by it by
\begin{equation}\label{eq:macro-generators}
 \mathcal H(\mathcal S)
 :=\G^{\circ}\cup
   \{A\setminus\{s\}:A\in\mathcal S\},
 \qquad
 \mathcal Q(\mathcal S)
 :=\bigcup_{H\in\mathcal H(\mathcal S)}\D(H).
\end{equation}
We call the pair
$\bigl(\mathcal H(\mathcal S),\mathcal Q(\mathcal S)\bigr)$ the
\emph{pull at $s$ determined by $\mathcal S$}.  This is a
support-reducing operation: it keeps the generators already supported on
$[s-1]$, replaces each selected boundary generator by the set obtained by
deleting $s$, and discards the unselected boundary generators.  Thus every
member of $\mathcal H(\mathcal S)$ lies in $[s-1]$, so the operation attempts
to lower the last support coordinate from $s$ to at most $s-1$.  The problem
is to choose $\mathcal S$ so that $\mathcal H(\mathcal S)$ remains
$t$-intersecting and $\rho(\mathcal Q(\mathcal S))\ge\rho(\F)$.
Section~\ref{sec:proof} makes this choice simultaneously over all boundary
ranks.

For $A\in\G^{\partial}$, the fact that $A$ is a generator gives
\begin{equation}\label{eq:boundary-trace-class}
 \C_s(A)
 =\left\{A\cup Z:Z\in\binom{\{s+1,\ldots,n\}}{k-|A|}\right\}.
\end{equation}
We refer to the family in~\eqref{eq:boundary-trace-class} as the boundary
trace class corresponding to $A$.

For every $A\in\G^{\partial}$, the family $\C_s(A)$ is nonempty.  Indeed, Lemma
\ref{lem:tight-pairs} gives a partner $B\in\G^{\partial}$ with
$|A|+|B|=s+t$ and $|B|\le k$.  Hence, if we set $|A|=k-q$, then
\begin{equation}\label{eq:q-range}
 0\le q\le2k-s-t<n-s.
\end{equation}

\begin{lemma}\label{lem:trace-decomposition}
Let $\F$ be a $t$-intersecting $k$-uniform family.  Then
\begin{equation}\label{eq:trace-decomposition}
 \F=\F^{\circ}\,\dot\cup\,
 \dotcupbig_{A\in\G^{\partial}}\C_s(A).
\end{equation}
\end{lemma}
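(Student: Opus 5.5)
We need to prove $\F=\F^{\circ}\,\dot\cup\,\dotcupbig_{A\in\G^{\partial}}\C_s(A)$ for the fixed pair $(\F,x)$, with $s=\sigma(\G)$ and $\G$ the minimal generating antichain. The plan is to establish three things: each boundary trace class lies in $\F$; every member of $\F$ lies either in $\F^{\circ}$ or in some $\C_s(A)$; and the pieces are pairwise disjoint.

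\emph{Inclusion of the pieces.} By definition $\F^{\circ}=\bigcup_{G\in\G^\circ}\D(G)\subseteq\F$. For $A\in\G^{\partial}$ we have $A\subseteq[s]$, so every member of $\C_s(A)$ has the form $A\cup Z$ with $Z\subseteq\{s+1,\ldots,n\}$. Such a set contains $A$, and $\D(A)\subseteq\F$ because $A$ is a generator. Hence $\C_s(A)\subseteq\D(A)\subseteq\F$; this is just \eqref{eq:boundary-trace-class}.

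\emph{Covering.} Take $F\in\F$. Since $\F=\bigcup_{G\in\G}\D(G)$, some $G\in\G$ satisfies $G\subseteq F$. If some such $G$ avoids $s$, then $F\in\F^{\circ}$. Otherwise every generator contained in $F$ contains $s$, and we set $A:=F\cap[s]$. The main step is to show that $A\in\G^{\partial}$. We have $G\subseteq A$ for some $G\in\G^{\partial}$. Suppose $A\supsetneq G$, and pick $u\in A\setminus G$; note $u\ne s$ because $s\in G$. We would like a generator avoiding $s$ inside $F$. Consider $G':=(G\setminus\{s\})\cup\{u\}$. As in the proof of Lemma~\ref{lem:tight-pairs}, every $k$-set containing $G'$ either contains $s$, and then contains $G$, or is obtained from a member containing $G$ by the compression $s\mapsto u$. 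Since $\F$ is left-compressed and $u<s$, every $k$-set containing $G'$ lies in $\F$. So $G'$ is a generator, contained in $F$ and avoiding $s$, and it contains a minimal generator that also avoids $s$. This contradicts our assumption, so $G=A=F\cap[s]$ and $F\in\C_s(A)$.

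\emph{Disjointness.} Distinct $A,A'\in\G^{\partial}$ give disjoint classes $\C_s(A),\C_s(A')$, because the trace on $[s]$ determines the class. It remains to see that $\C_s(A)\cap\F^{\circ}=\emptyset$. Suppose $F=A\cup Z\in\C_s(A)$ contains some $G\in\G^{\circ}$. Then $G\subseteq F\cap[s-1]=A\setminus\{s\}$, so $G\subsetneq A$. Both $G$ and $A$ lie in $\G$, so this contradicts the antichain property of $\G$.

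The step I expect to need the most care is the compression claim in the covering argument. One must check that replacing $s$ by $u\in F\setminus G$ (with $u\notin G$) keeps every superset of $G'$ inside $\F$. Concretely, take $H\supseteq G'$ with $s\notin H$. Then $(H\setminus\{u\})\cup\{s\}$ is a $k$-set containing $G$, hence lies in $\F$, and shifting it back from $s$ to $u$ returns $H$, which lies in $\F$ by left-compression. This is exactly the argument already used in Lemma~\ref{lem:tight-pairs}.
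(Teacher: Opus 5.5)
Your proof is correct. The inclusion and disjointness parts match the paper's, but the covering step takes a different route.

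The paper forms the same set $T=(A\setminus\{s\})\cup\{u\}$ but proves it is a $t$-transversal by testing it against every $B\in\G$. The only delicate case is $|A\cap B|=t$ with $s\in B$, where Lemma~\ref{lem:tight-pairs} gives $A\cup B=[s]$ and hence $u\in B$. The paper then invokes inclusion-maximality through Lemma~\ref{lem:standard-generator}(2) to conclude that $T$ is a generator.

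You instead show directly that every $k$-set containing $G'$ lies in $\F$, via the $(u,s)$-shift. This uses only left-compression together with $u<s$ and $u\notin G$; it is the same shifting argument that appears inside the proof of Lemma~\ref{lem:tight-pairs}.

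Your route is shorter and needs fewer hypotheses. It avoids inclusion-maximality, $t$-intersection, the assumption $n>2k-t$, and the identity $A\cup B=[s]$. So it shows the decomposition holds for every left-compressed family with $s=\sigma(\G)$. Compression itself cannot be dropped: for the full star $\D(\{2\})$ with $k\ge2$, one has $\G=\{\{2\}\}$, $s=2$ and $\F^{\circ}=\emptyset$, yet the members containing $1$ lie outside $\C_2(\{2\})$.

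The paper's route stays inside the Ahlswede--Khachatrian transversal/generator framework and uses compression only indirectly, through Lemma~\ref{lem:tight-pairs}.
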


\begin{proof}
Equation~\eqref{eq:boundary-trace-class} shows that the displayed classes lie
in $\F$.  Distinct sets $A\in\G^{\partial}$ give distinct trace classes by
\eqref{eq:trace-class-definition}.  These classes are disjoint from
$\F^{\circ}$ by the antichain property.

Take $F\in\F\setminus\F^{\circ}$ and choose
$A\in\G^{\partial}$ with $A\subseteq F$.  Suppose $F\cap[s]\not=A$, say
$u\in(F\cap[s])\setminus A$. Set $T=(A\setminus\{s\})\cup\{u\}$.  We claim
that $T$ is a $t$-transversal.  It is enough to test minimal generators
$B\in\G$.  If $|A\cap B|\ge t+1$, removing $s$ loses at most one point.  If
$|A\cap B|=t$ and $s\notin B$, nothing is lost.  Finally, if
$|A\cap B|=t$ and $s\in B$, Lemma~\ref{lem:tight-pairs} gives
$A\cup B=[s]$, so $u\in B$ replaces the deleted coordinate.  Thus
$|T\cap B|\ge t$ in every case.

By maximality and Lemma~\ref{lem:standard-generator}, $T$ is a generator and
contains a minimal generator $G\in\G^{\circ}$.  Since $T\subseteq F$, this
would imply $F\in\F^{\circ}$, a contradiction.  Hence $F\cap[s]=A$ and
$F\in\C_s(A)$.
\end{proof}

By symmetry of $\F$ under permutations of the last $n-s$ coordinates, its
Perron vector can be chosen so that
\[
 x_{s+1}=\cdots=x_n=\beta,
 \qquad x_s=\gamma.
\]
Indeed, start from any Perron vector and define $\beta$ by
\[
 (n-s)\beta^k=\sum_{a=s+1}^n x_a^k.
\]
For $0\le d\le n-s$, Maclaurin's inequality followed by the power-mean
inequality gives
\[
 \sum_{Z\in\binom{\{s+1,\ldots,n\}}{d}}\prod_{i\in Z}x_i
 \le \binom{n-s}{d}
      \left(\frac{\sum_{a=s+1}^n x_a^k}{n-s}\right)^{d/k}
 =\binom{n-s}{d}\beta^d.
\]
Decomposing $\F$ into its trace classes on $[s]$ therefore shows that
replacing $x_{s+1},\ldots,x_n$ by $\beta$ does not decrease $P_\F(x)$ and
preserves $\sum_i x_i^k$.  The resulting vector is again a Perron vector; we
continue to denote it by $x$ and put $\gamma=x_s$.

To compare $\F$ with a family obtained by the pull, define
$y=(y_1,\ldots,y_n)$ by
\begin{equation}\label{eq:tail-equalization}
 y_i=x_i\quad (i\in[s-1]),
 \qquad
 y_s=y_{s+1}=\cdots=y_n=\alpha,
 \qquad \mbox{where} \qquad(n-s+1)\alpha^k=\gamma^k+(n-s)\beta^k.
\end{equation}
We call this operation \emph{Perron tail symmetrization}.

\begin{lemma}\label{lem:tail-symmetrization}
Let $x$ be the Perron vector above, with
$x_s=\gamma$ and $x_{s+1}=\cdots=x_n=\beta$, and let $y$ be defined by
\eqref{eq:tail-equalization}.  Then
\begin{equation}\label{eq:tail-symmetrization-inequality}
 P_{\F^{\circ}}(x)\le P_{\F^{\circ}}(y).
\end{equation}
\end{lemma}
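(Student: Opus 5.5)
The plan is to decompose $\F^{\circ}$ into trace classes on $[s-1]$ and compare term by term. Every generator in $\G^{\circ}$ lies in $[s-1]$. Hence $\F^{\circ}$ is invariant under all permutations of $\{s,s+1,\ldots,n\}$, and membership of $F$ in $\F^{\circ}$ depends only on $F\cap[s-1]$. Writing $\mathcal T=\operatorname{Tr}_{s-1}(\F^{\circ})$, this gives
\[
 \F^{\circ}=\dotcupbig_{A\in\mathcal T}\C_{s-1}(A),
 \qquad
 P_{\F^{\circ}}(z)=k\sum_{A\in\mathcal T}\Bigl(\prod_{i\in A}z_i\Bigr)\, e_{k-|A|}(z_s,\ldots,z_n)
\]
for any $z\ge0$, where $e_d$ is the elementary symmetric polynomial in the $n-s+1$ tail variables. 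Since $x$ and $y$ agree on $[s-1]$, it suffices to prove, for each $0\le d\le n-s+1$,
\[
 e_d(\gamma,\beta,\ldots,\beta)\le e_d(\alpha,\ldots,\alpha)=\binom{n-s+1}{d}\alpha^d .
\]

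This inequality is the same Maclaurin plus power-mean argument used just before the lemma, now applied with $n-s+1$ variables instead of $n-s$. Maclaurin's inequality gives
\[
 e_d(w)\le\binom{m}{d}\Bigl(\frac{e_1(w)}{m}\Bigr)^d ,\qquad m=n-s+1 .
\]
The power-mean inequality with exponent $k\ge1$ then gives
\[
 \frac{e_1(w)}{m}\le\Bigl(\frac{\sum_j w_j^k}{m}\Bigr)^{1/k}.
\]
By the defining relation $(n-s+1)\alpha^k=\gamma^k+(n-s)\beta^k$ in~\eqref{eq:tail-equalization}, the right-hand side equals $\alpha$. The degenerate cases need a brief check. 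For $d=0$ both sides equal $1$. For $d>m$ both sides vanish, though~\eqref{eq:q-range} and $n>s$ make this irrelevant.

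Summing these inequalities over $A\in\mathcal T$ with the nonnegative weights $\prod_{i\in A}x_i$ yields~\eqref{eq:tail-symmetrization-inequality}.

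The only step needing care is the first one: checking that $\F^{\circ}$ is exactly a union of full trace classes on $[s-1]$, so that the tail enters only through complete elementary symmetric sums. This follows because $\D(G)$ for $G\subseteq[s-1]$ is determined by the condition $G\subseteq F\cap[s-1]$. Note that it uses only the structure of $\G^{\circ}$, not the boundary generators. No positivity or extremality of $x$ is needed, only $x\ge0$ and the tail form $x_s=\gamma$, $x_{s+1}=\cdots=x_n=\beta$.
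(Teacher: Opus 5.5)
Your proposal is correct and follows the paper's proof essentially verbatim. Both decompose $\F^{\circ}$ into the full trace classes $\C_{s-1}(A)$, reduce to comparing the elementary symmetric sums of the tail $(x_s,\ldots,x_n)$ with $\binom{m}{d}\alpha^d$ for $m=n-s+1$, and conclude by Maclaurin's inequality followed by the power-mean inequality.
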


\begin{proof}
Put $m=n-s+1$.  For every
$A\in\operatorname{Tr}_{s-1}(\F^{\circ})$, the definition of
$\F^{\circ}$ gives
\[
 \F^{\circ}_{s-1}(A)=\C_{s-1}(A).
\]
Consequently,
\begin{align*}
 P_{\F^{\circ}}(x)
 &=k\sum_{A\in\operatorname{Tr}_{s-1}(\F^{\circ})}
   \left(\prod_{i\in A}x_i\right)
   \sum_{Z\in\binom{\{s,\ldots,n\}}{k-|A|}}
   \prod_{j\in Z}x_j,\\
 P_{\F^{\circ}}(y)
 &=k\sum_{A\in\operatorname{Tr}_{s-1}(\F^{\circ})}
   \left(\prod_{i\in A}x_i\right)
   \binom{m}{k-|A|}\alpha^{k-|A|}.
\end{align*}
For $0\le d\le m$, Maclaurin's inequality and the power-mean inequality give
\[
 \sum_{Z\in\binom{\{s,\ldots,n\}}{d}}\prod_{j\in Z}x_j
 \le \binom md
      \left(\frac{\sum_{j=s}^n x_j^k}{m}\right)^{d/k}
 =\binom md\alpha^d.
\]
Termwise comparison in the preceding two sums proves
\eqref{eq:tail-symmetrization-inequality}.
\end{proof}

Let $A\in\G^{\partial}$ with $|A|=k-q$, and put
$A^-=A\setminus\{s\}$.  Then
\begin{equation}\label{eq:old-boundary-contribution}
 P_{\C_s(A)}(x)
 =k\left(\prod_{i\in A}x_i\right)\binom {n-s}{q}\beta^q.
\end{equation}
Assume temporarily that $\gamma,\beta>0$, and set
\[
 \eta:=\frac\beta\gamma,
 \qquad
 \theta:=\frac\alpha\gamma
 =\left(\frac{1+(n-s)\eta^k}{n-s+1}\right)^{1/k}.
\]
\begin{equation}\label{eq:gain-factor}
 \begin{aligned}
 P_{\C_{s-1}(A^-)}(y)
 &=k\left(\prod_{i\in A^-}x_i\right)
   \binom{n-s+1}{q+1}\alpha^{q+1}\\
 &=\Gamma_q(\eta)P_{\C_s(A)}(x),
 \qquad
 \Gamma_q(\eta):=\frac{n-s+1}{q+1}
                  \frac{\theta^{q+1}}{\eta^q}.
 \end{aligned}
\end{equation}
Thus $\Gamma_q(\eta)$ is the ratio between the two explicitly defined
polynomial contributions in~\eqref{eq:gain-factor}.  Its minimum is
independent of the Perron vector.

More generally, for $\mathcal S\subseteq\G^{\partial}$ define
\[
 \mathcal B(\mathcal S)
 :=\dotcupbig_{A\in\mathcal S}\C_s(A),
 \qquad
 \mathcal N(\mathcal S)
 :=\dotcupbig_{A\in\mathcal S}\C_{s-1}(A\setminus\{s\}).
\]
If every $A\in\mathcal S$ has size $k-q$, then linearity of $P$ and
\eqref{eq:gain-factor} give
\begin{equation}\label{eq:old-new-trace-polynomials}
 P_{\mathcal N(\mathcal S)}(y)
 =\Gamma_q(\eta)P_{\mathcal B(\mathcal S)}(x).
\end{equation}

The following calculation identifies this worst-case gain exactly.

\begin{lemma}\label{lem:universal-gain}
For $q\ge1$,
\begin{equation}\label{eq:universal-gain}
 g_q:=\min_{\eta>0}\Gamma_q(\eta)
 =\left(\frac{n-s+1}{q+1}\right)^{(k-q-1)/k}
  \left(\frac {n-s}q\right)^{q/k},
\end{equation}
and the minimum occurs at $(n-s)\eta^k=q$.  For $q=0$,
\begin{equation}\label{eq:gain-q-zero}
 g_0:=\inf_{\eta>0}\Gamma_0(\eta)=(n-s+1)^{(k-1)/k}.
\end{equation}
\end{lemma}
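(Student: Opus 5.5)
Write $N=n-s$ and $u=\eta^k>0$. By definition,
\[
 \Gamma_q(\eta)^k=\frac{(N+1)^k}{(q+1)^k}\cdot\frac{\bigl(\frac{1+Nu}{N+1}\bigr)^{q+1}}{u^q}
 =\frac{(N+1)^{k-q-1}}{(q+1)^k}\cdot\frac{(1+Nu)^{q+1}}{u^q}.
\]
Minimizing $\Gamma_q$ over $\eta>0$ is therefore the same as minimizing the one-variable function
\[
 h(u):=(q+1)\log(1+Nu)-q\log u,\qquad u>0.
\]
We will compute $\min h$ by calculus and then substitute back.

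For $q\ge1$ we have $h(u)\to+\infty$ as $u\to0^+$, because of the $-q\log u$ term. As $u\to\infty$, $h(u)\sim\log u\to+\infty$. The derivative is
\[
 h'(u)=\frac{(q+1)N}{1+Nu}-\frac qu .
\]
It vanishes exactly when $(q+1)Nu=q(1+Nu)$, that is, when $Nu=q$. This is the unique critical point and $h$ is coercive at both ends, so it is the global minimum. This gives the location $(n-s)\eta^k=q$ claimed in the statement.

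At that point $1+Nu=q+1$ and $u=q/N$. Substituting,
\[
 \Gamma_q^k=\frac{(N+1)^{k-q-1}}{(q+1)^k}(q+1)^{q+1}\Bigl(\frac Nq\Bigr)^q
 =\Bigl(\frac{N+1}{q+1}\Bigr)^{k-q-1}\Bigl(\frac Nq\Bigr)^q .
\]
Taking $k$-th roots yields~\eqref{eq:universal-gain}. Note that $N\ge1$ by~\eqref{eq:q-range}, so $N>0$ and every step above is legitimate.

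For $q=0$ the formula reduces to $\Gamma_0^k=(N+1)^{k-1}(1+Nu)$. This is strictly increasing in $u$, so its infimum is approached as $u\to0^+$ and equals $(N+1)^{k-1}$. This is~\eqref{eq:gain-q-zero}; the infimum is not attained.

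There is no real obstacle here. The only points needing care are the boundary behaviour of $h$, which guarantees that the critical point is a global minimum, and the separate treatment of $q=0$, where no interior minimizer exists.
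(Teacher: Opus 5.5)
Your proof is correct and follows essentially the same route as the paper. The paper differentiates $\log\Gamma_q$ directly in $\eta$ to get $\frac{(n-s)\eta^k-q}{\eta(1+(n-s)\eta^k)}$, which is your computation after the substitution $u=\eta^k$, and it reads off the $q=0$ case from the definition of $\Gamma_0$; your added remarks on the behaviour at both ends and on the infimum not being attained for $q=0$ are accurate.
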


\begin{proof}
For $q\ge1$, differentiation gives
\[
 \frac{\dd}{\dd\eta}\log\Gamma_q(\eta)
 =\frac{(n-s)\eta^k-q}{\eta(1+(n-s)\eta^k)}.
\]
The unique minimum is therefore $(n-s)\eta^k=q$, and substitution yields
\eqref{eq:universal-gain}.  Formula~\eqref{eq:gain-q-zero} follows directly
from~\eqref{eq:gain-factor}.
\end{proof}

If $\gamma=0$, then $\beta=0$ and $P_{\C_s(A)}(x)=0$ for every
$A\in\G^{\partial}$.  If $\gamma>0$ but $\beta=0$, then
$P_{\C_s(A)}(x)=0$ whenever
$q>0$, while the case $q=0$ is covered by~\eqref{eq:gain-q-zero}.  Hence all
subsequent inequalities between the corresponding values of $P$ extend to
these cases by continuity.

\section{Monotonicity of the Frankl-family radii}\label{sec:numerics}

Throughout this section, the standing assumption is $n>2k-t$.

We begin by making the Frankl-family radii explicit.  This orbit reduction is
used only for numerical comparisons; the later support-reduction argument
does not assume that an arbitrary intersecting family has comparable
symmetry.

\begin{proposition}\label{prop:candidate-orbit-formula}
For every $0\le r\le k-t$, the spectral radius of the Frankl family $\A_r$ is
\begin{equation}\label{eq:candidate-orbit-formula}
 \rho_r=
 \max_{0\le u\le1}
 k\sum_{j=t+r}^{k}
 \binom{t+2r}{j}\binom{n-t-2r}{k-j}
 \left(\frac{u}{t+2r}\right)^{j/k}
 \left(\frac{1-u}{n-t-2r}\right)^{(k-j)/k}.
\end{equation}
Here and below, an inadmissible binomial coefficient is zero, and a factor
with exponent zero is interpreted as $1$.
\end{proposition}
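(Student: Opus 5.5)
The plan is to exploit the symmetry group of $\A_r$, namely $\mathrm{Sym}([t+2r])\times\mathrm{Sym}([n]\setminus[t+2r])$, and reduce the maximization in~\eqref{eq:spectral-polynomial} to vectors that are constant on each of the two orbits $S:=[t+2r]$ and $S^c$. The reduction uses the same device as the tail equalization preceding Lemma~\ref{lem:tail-symmetrization}, applied now to both blocks at once. Put $p:=t+2r$. If $r=k-t$, then $n-p=n-2k+t>0$ and the formula still makes sense (terms with $j<k$ vanish because $j\ge t+r=k$ forces $j=k$). Throughout, inadmissible binomials are zero.

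First, write $P_{\A_r}$ by trace classes on $S$. A $k$-set $F$ lies in $\A_r$ if and only if $|F\cap S|\ge t+r$, so
\[
 P_{\A_r}(x)=k\sum_{j=t+r}^{k} e_j(x_S)\,e_{k-j}(x_{S^c}),
\]
where $e_d$ denotes the elementary symmetric polynomial of degree $d$ in the indicated coordinates. This holds because $\A_r$ is exactly the union of the full classes $\C_S(A)$ over $|A|\ge t+r$.

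Next, fix the masses $u:=\sum_{i\in S}x_i^k$ and $1-u=\sum_{i\notin S}x_i^k$. Maclaurin's inequality followed by the power-mean inequality, exactly as in the proof of Lemma~\ref{lem:tail-symmetrization}, gives
\[
 e_j(x_S)\le\binom{p}{j}\Bigl(\frac{u}{p}\Bigr)^{j/k},
 \qquad
 e_{k-j}(x_{S^c})\le\binom{n-p}{k-j}\Bigl(\frac{1-u}{n-p}\Bigr)^{(k-j)/k}.
\]
Equality holds when each block is constant. All coefficients are nonnegative, so termwise multiplication bounds $P_{\A_r}(x)$ by the bracketed function $\Phi_r(u)$ of~\eqref{eq:candidate-orbit-formula}. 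Hence $\rho_r\le\max_{0\le u\le1}\Phi_r(u)$. Conversely, for any $u\in[0,1]$, the vector with $x_i=(u/p)^{1/k}$ on $S$ and $x_i=((1-u)/(n-p))^{1/k}$ off $S$ is feasible and attains $\Phi_r(u)$. This gives the reverse inequality. The maximum exists because $\Phi_r$ is continuous on $[0,1]$.

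The only delicate points are the conventions at the boundary of the range. When $u=0$ or $u=1$, a zero base with exponent zero must be read as $1$, which is exactly the stated convention. When $k-j>n-p$, the binomial coefficient vanishes, so Maclaurin is only invoked for admissible degrees $d\le$ block size. I expect no real obstacle here. The one step worth stating carefully is the justification that Maclaurin plus power mean is valid with the $\ell_k$ normalization, i.e.\ $e_d(z)/\binom{m}{d}\le (\sum z_i/m)^d\le(\sum z_i^k/m)^{d/k}$ for $z\ge0$. This is precisely the inequality already used before Lemma~\ref{lem:tail-symmetrization}. As a sanity check, the endpoint cases recover~\eqref{eq:endpoint-radii}: for $r=0$, optimize $u$ in the single term $j=t$ combined with the terms $j>t$, and for $r=k-t$ only $j=k$ survives, giving $u=1$.
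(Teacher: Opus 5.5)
Your proof is correct, but it follows a different route from the paper's.

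The paper substitutes $y_i=x_i^k$ and notes that each monomial $\prod_{i\in F}y_i^{1/k}$ is a geometric mean, hence concave. So $P_{\A_r}$ is concave on the simplex $\{y\ge0:\sum_i y_i=1\}$ and invariant under $\mathrm{Sym}([t+2r])\times\mathrm{Sym}([n]\setminus[t+2r])$. Averaging any maximizer over this group (a Jensen step) gives a maximizer that is constant on the two orbits. The formula is then read off by counting the $\binom{t+2r}{j}\binom{n-t-2r}{k-j}$ edges with $j$ core vertices.

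You instead factor $P_{\A_r}$ through the trace decomposition as $k\sum_j e_j(x_S)e_{k-j}(x_{S^c})$. You bound each factor separately by Maclaurin plus power mean at fixed block masses $u$ and $1-u$, which is exactly the estimate behind Lemma~\ref{lem:tail-symmetrization}. The block-constant vector then gives the reverse inequality.

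The paper's argument needs only invariance under some permutation group, so it would give orbit-constant maximizers for any symmetric family. Yours needs invariance under the full symmetric group of each block; for $\A_r$ this is just the fact that membership depends only on $|F\cap S|$. In return, you avoid the change of variables and the concavity step, and the paper stays on a single symmetrization inequality.

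There is a minor slip in your sanity check. For $r=0$ the terms with $j>t$ are not ``combined'' with the $j=t$ term; they vanish because $\binom{t}{j}=0$. Maximizing the single remaining term at $u=t/k$ recovers~\eqref{eq:endpoint-radii}.
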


\begin{proof}
Let $y_i=x_i^k$.  For every edge $F$, the corresponding monomial becomes
$\prod_{i\in F}y_i^{1/k}$, the geometric mean of the $k$ coordinates indexed
by $F$, and is therefore concave on the nonnegative orthant.  Hence
$P_{\A_r}$, expressed in the variables $y$, is concave.  It is also invariant
under permutations within the core $[t+2r]$ and within its complement.
Averaging $y$ over this automorphism group and applying concavity cannot
decrease the polynomial.  Thus some maximizing vector has constant $k$th
powers on each of the two vertex orbits.

Let
\[
 u:=\sum_{i=1}^{t+2r}x_i^k.
\]
The common $k$th powers on the core and its complement are then $u/(t+2r)$ and
$(1-u)/(n-t-2r)$, respectively.  There are
$\binom{t+2r}{j}\binom{n-t-2r}{k-j}$ edges having exactly $j$ core vertices.
Summing their contributions and maximizing over $0\le u\le1$ gives
\eqref{eq:candidate-orbit-formula}.
\end{proof}

This section isolates the numerical content of the Frankl-family sequence.
We first use a lower bound on the universal pull gain to construct a boundary pull from  $\A_r$ to $\A_{r-1}$ that
does not decrease spectral radius.  We then prove that for $t\ge2$ the single
condition $\rho_0\ge\rho_1$ propagates through the whole list of Frankl
families.  This is the numerical half of
Theorem~\ref{thm:first-candidate}.  The boundary-rank
bookkeeping used in the structural pull is placed in Section~\ref{sec:proof},
and the case $t=1$ is postponed to Section~\ref{sec:t-one}.

For the proof only, it is convenient to name domination of all the Frankl
families.  We say that the full $t$-star \emph{dominates the Frankl families}
if
\begin{equation}\label{eq:candidate-dominance}
 \rho_0\ge\rho_r\qquad\text{for every }1\le r\le k-t.
\end{equation}
This auxiliary notion will be eliminated from the statement of the main
theorem by Proposition~\ref{prop:propagation}.

Dominance over the Frankl families has two immediate numerical consequences.

\begin{lemma}\label{lem:numerical-consequences}
Assume $1\le t<k$ and that the full $t$-star dominates the Frankl families.
Then
\begin{equation}\label{eq:ground-set-gap}
 n\ge2k-t+2
\end{equation}
and
\begin{equation}\label{eq:basic-numerical-condition}
 n-k-1\ge t(k-t)
 \left(\frac{k-t}{n-t}\right)^{1/k}.
\end{equation}
\end{lemma}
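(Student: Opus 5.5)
The plan is to derive \eqref{eq:basic-numerical-condition} first, using only $\rho_0\ge\rho_1$ and a test-vector comparison, and then to obtain \eqref{eq:ground-set-gap} by checking that \eqref{eq:basic-numerical-condition} fails at the single excluded value $n=2k-t+1$ (for $t\ge2$). For $t=1$ I would instead compare $\rho_0$ with $\rho_{k-1}$ directly.

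\emph{Perron vector of $\A_0$.} Apply Proposition~\ref{prop:candidate-orbit-formula} with $r=0$. Only the term $j=t$ survives, so we must maximize $\binom{n-t}{k-t}(u/t)^{t/k}\bigl((1-u)/(n-t)\bigr)^{(k-t)/k}$ over $u\in[0,1]$. Its logarithm is concave in $u$, with maximum at $u=t/k$. So a Perron vector $x$ of $\A_0$ has $x_i=a$ for $i\in[t]$ and $x_i=b$ for $i>t$, where
\[
 a^k=\frac1k,\qquad b^k=\frac{k-t}{k(n-t)},\qquad \frac ba=\left(\frac{k-t}{n-t}\right)^{1/k}.
\]

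\emph{Comparison of $\A_1$ with $\A_0$ at $x$.} Since $\rho(\A_1)\ge P_{\A_1}(x)$, it suffices to compute $P_{\A_1}(x)-P_{\A_0}(x)$.
\begin{itemize}
 \item Members of $\A_0\setminus\A_1$ are the sets containing $[t]$ but neither $t+1$ nor $t+2$. There are $\binom{n-t-2}{k-t}$ of them, each with monomial $a^tb^{k-t}$.
 \item Members of $\A_1\setminus\A_0$ are the sets $([t+2]\setminus\{i\})\cup Z$ with $i\in[t]$ and $Z\subseteq[n]\setminus[t+2]$, $|Z|=k-t-1$. There are $t\binom{n-t-2}{k-t-1}$ of them, each with monomial $a^{t-1}b^{k-t+1}$.
\end{itemize}
Hence
\[
 \rho_1\ \ge\ \rho_0-k\,a^{t-1}b^{k-t}\Bigl[a\binom{n-t-2}{k-t}-t\,b\binom{n-t-2}{k-t-1}\Bigr].
\]
Dominance forces the bracket to be nonnegative, which means $a\binom{n-t-2}{k-t}\ge t\,b\binom{n-t-2}{k-t-1}$. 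The ratio of the two binomials is $(n-k-1)/(k-t)$, so this reads $(n-k-1)\ge t(k-t)\,b/a$. Substituting the value of $b/a$ gives exactly \eqref{eq:basic-numerical-condition}.

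\emph{Ruling out $n=2k-t+1$.} Since $n>2k-t$ is standing, only $n=2k-t+1$ has to be excluded.

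For $t\ge2$, insert $n=2k-t+1$ into \eqref{eq:basic-numerical-condition}. The left side is $k-t$, and the right side is $t(k-t)\bigl((k-t)/(2k-2t+1)\bigr)^{1/k}$. The ratio $(k-t)/(2k-2t+1)$ is at least $1/3$, and $k\ge3$ because $2\le t<k$. Therefore
\[
 t\left(\frac{k-t}{2k-2t+1}\right)^{1/k}\ \ge\ 2\cdot3^{-1/3}>1,
\]
so \eqref{eq:basic-numerical-condition} fails, contradicting dominance.

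For $t=1$ the estimate is too weak, so I use the endpoint family instead. Here $n=2k$, and \eqref{eq:endpoint-radii} gives
\[
 \left(\frac{\rho_{k-1}}{\rho_0}\right)^k=\frac{k^k}{(2k-1)(k-1)^{k-1}}.
\]
This exceeds $1$ because $\bigl(k/(k-1)\bigr)^{k-1}\cdot k/(2k-1)>1$; for instance, $(1+1/(k-1))^{k-1}\ge2$ while $k/(2k-1)>1/2$. Thus $\rho_{k-1}>\rho_0$, again contradicting dominance.

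The only delicate point is the constant check in the $t\ge2$ case, including the boundary situation $k-t=1$. The $t=1$ case is just the inequality already recorded in the introduction.
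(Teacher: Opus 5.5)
Your proof is correct. Your derivation of \eqref{eq:basic-numerical-condition} matches the paper's: you evaluate $P_{\A_1}$ at the two-orbit Perron vector of the star, where $b/a=\bigl((k-t)/(n-t)\bigr)^{1/k}$, and cancel the symmetric difference. Where you depart is in obtaining \eqref{eq:ground-set-gap}.

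The paper shows $\rho_{k-t}>\rho_0$ at $n=2k-t+1$ for every $t$. It holds $k-t$ fixed and takes your $t=1$ computation as the base case. It then checks that the ratio $\rho_{k-t}/\rho_0$ is multiplied by a factor exceeding $1$ under $(k,t,n)\mapsto(k+1,t+1,n+1)$.

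You reverse the order. You prove \eqref{eq:basic-numerical-condition} first, then note that at $n=2k-t+1$ it reduces to $1\ge t\bigl((k-t)/(2(k-t)+1)\bigr)^{1/k}$, which fails once $t\ge2$. This is exactly the argument the paper uses later, in the proof of Proposition~\ref{prop:propagation}, to show that $\rho_0\ge\rho_1$ alone already forces \eqref{eq:ground-set-gap}.

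\textbf{What your route buys.} It is shorter, avoids the inductive scaling computation, and shows that for $t\ge2$ only the comparison with $\A_1$ is needed.

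\textbf{What it costs.} It needs a case split. For $t=1$, inequality \eqref{eq:basic-numerical-condition} does hold at $n=2k$, so you rightly fall back on the endpoint family $\A_{k-1}$. That fallback is genuinely necessary: the $(n,k)=(8,4)$ example in Section~\ref{sec:t-one} has $\rho_0>\rho_1$ but $\rho_3>\rho_0$.

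\textbf{What the paper's route buys.} It treats all $t$ uniformly. It also identifies the complete family $\binom{[2k-t]}{k}$ as the competitor that beats the star at the boundary, whereas your argument shows that for $t\ge2$ even $\A_1$ already does.
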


\begin{proof}
Suppose first that $n=2k-t+1$.  Fixing $k-t$ and comparing the two endpoint
values in~\eqref{eq:endpoint-radii}, at the smallest possible value
$k=(k-t)+1$ we have
\[
 \frac{\rho_{k-t}}{\rho_0}
 =\frac{k-t+1}{2(k-t)+1}
  \left(\frac{2(k-t)+1}{k-t}\right)^{(k-t)/(k-t+1)}>1,
\]
because this inequality is equivalent to
$((k-t)+1)^{k-t+1}>(2(k-t)+1)(k-t)^{k-t}$, which follows from
\[
 \left(1+\frac1{k-t}\right)^{k-t}\ge2>
 \frac{2(k-t)+1}{k-t+1}.
\]
Under the replacement
\[
(k,t,n)\longmapsto(k+1,t+1,n+1),
\]
the ratio $\rho_{k-t}/\rho_0$ is multiplied by
\[
\left(1+\frac{k-t}{k}\right)
\left(\frac{k-t}{2(k-t)+1}\right)^{(k-t)/[k(k+1)]}>1.
\]
Indeed, after raising both sides to the power $k(k+1)$, it is enough to show
\[
\left(1+\frac{k-t}{k}\right)^{k(k+1)}
>
\left(\frac{2(k-t)+1}{k-t}\right)^{k-t}.
\]
By Bernoulli's inequality,
\[
\left(1+\frac{k-t}{k}\right)^{k(k+1)}
\ge (k-t+1)^{k+1}.
\]
Moreover,
\[
(k-t+1)^{k+1}>3^{k-t}
\ge
\left(\frac{2(k-t)+1}{k-t}\right)^{k-t}.
\]
Note that the first strict inequality is immediate when $k-t=1$, while for
$k-t\ge2$ it follows from $k+1\ge k-t+2$ and $k-t+1\ge3$.

Hence $\rho_{k-t}>\rho_0$ whenever $n=2k-t+1$, contradicting dominance over
the Frankl families.  This proves~\eqref{eq:ground-set-gap}.

For~\eqref{eq:basic-numerical-condition}, evaluate $\A_1$ at a Perron vector
of the star.  Its first $t$ coordinates  have a common value $a$, the other
$n-t$ coordinates have a common value $b$, and
\[
 \frac ba=\left(\frac{k-t}{n-t}\right)^{1/k}.
\]
Passing from $\A_0$ to $\A_1$, the followings hold: \begin{align*}
\A_0\setminus\A_1&=\{F\in\binom{[n]}k:F\cap[t+2]=t\}\\
\A_1\setminus\A_0&=\{F\in\binom{[n]}k:F\cap[t]\in\binom{[t]}{t-1},[t+1,t+2]\subseteq F\}.
\end{align*}  Since $\rho_1\le\rho_0$, evaluation at
this vector gives
\[
 t\binom{n-t-2}{k-t-1}a^{t-1}b^{k-t+1}
 \le \binom{n-t-2}{k-t}a^tb^{k-t}.
\]
Cancellation yields $t(k-t)(b/a)\le n-k-1$, which is exactly
\eqref{eq:basic-numerical-condition}.
\end{proof}

The same necessary estimate explains the leading term in
Corollary~\ref{cor:explicit-threshold}.  Fix $t$ and suppose that
$n=c(k-t)+O_t(1)$ as $k-t\to\infty$.  Then
\eqref{eq:basic-numerical-condition}, divided by $k-t$, becomes
\[
 c-1+o(1)
 \ge t\left(\frac1{c+o(1)}\right)^{1/k}
 =t+o(1).
\]
Consequently $c\ge t+1$. Thus no threshold of the form with a smaller coefficient of $k-t$ can be valid for all sufficiently large  $k$.

The next lemma is the numerical estimate needed for adjacent Frankl-family
comparisons; it will also be reused in the diagonal stage of the boundary
pull.

\begin{lemma}\label{lem:middle-envelope}
Let $t\ge2$.  Assume~\eqref{eq:ground-set-gap} and
\eqref{eq:basic-numerical-condition} hold.  For $1\le r\le k-t$, let
\[
 L_{t,r}:=\min\left\{r,\frac{t+2r-1}{r}\right\}.
\]
Then the corresponding universal gain in Lemma~\ref{lem:universal-gain}
satisfies
\begin{equation}\label{eq:middle-envelope}
 g_{k-t-r}>L_{t,r}.
\end{equation}
\end{lemma}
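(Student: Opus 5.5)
The plan is to read the statement in the setting where it is used: the pull from $\A_r$ to $\A_{r-1}$. There the support length is $s=t+2r$, the boundary generators have size $t+r=k-q$, and so $q=k-t-r$ and $n-s=n-t-2r$. With this identification,
\[
 g_q=a^{(k-q-1)/k}\,b^{q/k},
 \qquad a:=\frac{n-t-2r+1}{q+1},\qquad b:=\frac{n-t-2r}{q},
\]
by Lemma~\ref{lem:universal-gain}. When $q=0$, that is $r=k-t$, we instead have $g_0=(n-t-2r+1)^{(k-1)/k}$.

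The first step is an elementary comparison of the two bases. We have $b\ge a$ exactly when $n-s\ge q$. This holds with room to spare, since \eqref{eq:ground-set-gap} gives $n-t-2r\ge 2q+2$. Because the exponents sum to $(k-1)/k$, we get $g_q\ge a^{(k-1)/k}$. It therefore suffices to prove the single inequality
\[
 a^{k-1}>L_{t,r}^{\,k},
\]
and this inequality is monotone in $n$.

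The second step lower-bounds $a$ using the hypotheses. Condition \eqref{eq:ground-set-gap} alone gives $a\ge(2q+3)/(q+1)>2$. That settles the cases with $L_{t,r}\le2$, which include $r=1$ and also every $r$ with $(t+2r-1)/r\le 2$, up to the exponent loss; the loss is harmless since $a>2$ strictly and we compare $a^{k-1}$ with $2^k$ using $a\ge 2+1/(q+1)$ and $k\ge q+2$.

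For larger $L_{t,r}$ the plan is to use \eqref{eq:basic-numerical-condition}. Write it as $n-t\ge (k-t+1)+t(k-t)c$, where $c=((k-t)/(n-t))^{1/k}$. Then
\[
 a\ge \frac{(k-t+1)+t(k-t)c-2r+1}{k-t-r+1}.
\]
Bounding this below by an expression in $t,r$ and $c$, the numerator is at least of order $(t+1)(k-t)c$ while the denominator is at most $k-t$. This suggests $a\gtrsim (t+1)c$, up to lower-order corrections that must be tracked. Compare this with $L_{t,r}\le\min\{r,\,2+(t-1)/r\}\le 1+\sqrt{t+1}$, which is far below $t+1$ for $t\ge2$. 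The target is then
\[
 \bigl((t+1)c'\bigr)^{k-1}>(1+\sqrt{t+1})^k
\]
for an explicit $c'$ close to $c$.

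The main obstacle is the factor $c<1$ together with the exponent deficit $(k-1)/k$. When $n$ is huge, $c$ is small, but then $a$ itself is large because $n$ appears directly in the numerator. So I would split into two regimes. In the first, $n-t\ge (t+1)(k-t+1)$; then $a\ge(n-t-2r+1)/(k-t-r+1)$ is directly at least about $t+1-o(1)$ without using $c$. In the second, $n$ is near the threshold, so $c\ge ((k-t)/((t+1)(k-t+1)))^{1/k}\ge (2(t+1))^{-1/k}$. Then $c^{k}$ loses only a factor $2(t+1)$, which is absorbed because $(t+1)^{k-1}/(1+\sqrt{t+1})^k$ grows geometrically once $k\ge t+1$.

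The small cases ($k-t$ and $t$ both tiny, e.g.\ $t=2$, $k\le 4$) must be checked by hand using $L_{2,1}=1$, $L_{2,2}=5/2$ and the exact form of $g_q$ rather than the $a^{(k-1)/k}$ bound. Here I would retain the factor $b^{q/k}$, which is strictly larger than $a^{q/k}$.
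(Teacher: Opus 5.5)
Your reduction $g_q\ge a^{(k-1)/k}$, obtained from $b\ge a$, is exactly the paper's first step. However, both quantitative claims that follow it fail as written.

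\textbf{The case $L_{t,r}\le2$.} For $t\ge2$ the condition $(t+2r-1)/r\le2$ never holds. The case with $L_{t,r}=2$ is $r=2$, for every $t$, and your ground-set-gap argument does not cover it. Hypothesis \eqref{eq:ground-set-gap} alone allows $n=2k-t+2$, where $a=2+\frac1{q+1}$ exactly, and then $a^{k-1}>2^k$ is false. With $k=q+t+2$ the ratio $a^{k-1}/2^k$ tends to $e^{1/2}/2<1$. Concretely, $t=r=2$, $q=4$, $k=8$ gives $(11/5)^7\approx249.4<256$. So $b$ cannot be dropped here. Keeping it repairs the step: for $q\ge1$, $b\ge2+2/q$ gives $b^q\ge2^{q+1}$, hence $g_q^k=a^{t+1}b^q>2^k$. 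This is how the paper treats $r=2$ when $t=2$.

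\textbf{The regime governed by \eqref{eq:basic-numerical-condition}.} Your chain gives $a^{k-1}\ge((t+1)c)^{k-1}\ge(t+1)^{k-2}/2$. You therefore need $(t+1)^{k-2}>2(1+\sqrt{t+1})^k$, which holds only for $k\ge31$ when $t=2$, $k\ge13$ when $t=3$, and $k\ge9$ when $t=4$. The cause is that for small $t$ the margin between $t+1$ and $L_{t,r}$ is thin ($3$ versus $L_{2,3}=7/3$). A constant multiplicative loss is then absorbed only for large $k$, not ``once $k\ge t+1$''.

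Even with the exact $L$ and your sharper bound $c\ge((k-t)/((t+1)(k-t+1)))^{1/k}$, the case $t=2$, $k=6$, $r=3$ fails. It gives only $a\ge3(4/15)^{1/6}\approx2.41$, so $a^5\approx81<(7/3)^6\approx161$. The residual cases are therefore not ``$t=2$, $k\le4$''; those have $r\le2$. They are all $r\ge3$ with $t=2$ and $5\le k\le30$, plus similar ranges for $t=3,4$. Your proposal gives no argument for these cases.

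\textbf{What is missing.} You need a sharper, $r$-sensitive use of \eqref{eq:basic-numerical-condition} in place of $a\gtrsim(t+1)c$. The paper does this in two ways.
\begin{itemize}
\item For $t\ge3$ it argues by contradiction, so that the lower bound on $c$ is self-consistent. Put $\tau=k/(k-1)$. If $g\le L$, then $(n-k-1)/(k-t)<L^\tau-1$ and $(n-t)/(k-t)<L^\tau+1$, so \eqref{eq:basic-numerical-condition} forces $t<\Phi:=(L^\tau-1)(L^\tau+1)^{1/k}$. This is then refuted using the exact $L_{t,r}$, split according to whether $(r-1)^2\le t$ or $(r-1)^2\ge t$. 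The bound $k-1\ge t+r-1$ gives $r^\tau\le r+1$ or $L^\tau\le L+1$, respectively.
\item For $t=2$ this contradiction argument also fails; for example $r=3$, $k=5$ gives $\Phi\approx2.47$. Instead the paper derives $n-(2k-t)\ge k-t-1$ from \eqref{eq:basic-numerical-condition}. This yields $a\ge3$ for $r\ge3$, hence $g\ge3^{4/5}>7/3\ge L_{2,r}$.
\end{itemize}
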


\begin{proof}
We separate $t\ge3$ and $t=2$.

\smallskip
\noindent\emph{Case 1: $t\ge3$.}
Let
\[
 \tau:=\frac{k}{k-1}.
\]
When $k-t-r>0$, the two factors in~\eqref{eq:universal-gain} satisfy
\[
 \begin{aligned}
 &\frac{n-t-2r+1}{k-t-r+1}
 =2+\frac{n-(2k-t)-1}{k-t-r+1}
 \ge2+\frac{n-(2k-t)-1}{k-t}=1+\frac{n-k-1}{k-t},\\
 &\frac{n-t-2r}{k-t-r}
 >\frac{n-t-2r+1}{k-t-r+1}.
 \end{aligned}
\]
The second comparison is strict and has positive exponent, so the resulting
lower bound for $g_{k-t-r}$ is strict.  When $k-t-r=0$,
\[
 n-t-2r+1-\left(1+\frac{n-k-1}{k-t}\right)
 =\frac{(k-t-1)(n-(2k-t)-1)}{k-t},
\]
so the same strict bound follows from~\eqref{eq:gain-q-zero} unless
$k-t=r=1$;
in that exceptional
subcase $L_{t,r}=1$ and $g_0>1$ directly.  Hence, apart from this already settled
subcase,
\begin{equation}\label{eq:g-H-bound}
 g_{k-t-r}>\left(1+\frac{n-k-1}{k-t}\right)^{(k-1)/k}.
\end{equation}
Condition~\eqref{eq:basic-numerical-condition} is equivalent to
\begin{equation}\label{eq:H-condition}
 \frac{n-k-1}{k-t}\left(\frac{n-t}{k-t}\right)^{1/k}\ge t.
\end{equation}
Suppose~\eqref{eq:middle-envelope} does not hold. Then~\eqref{eq:g-H-bound} would give
$\frac{n-k-1}{k-t}<L_{t,r}^\tau-1$.  Since
\[
 \frac{n-t}{k-t}=1+\frac{n-k-1}{k-t}+\frac1{k-t}<L_{t,r}^{\tau}+1,
\]
~\eqref{eq:H-condition} would imply
\begin{equation}\label{eq:Phi-contradiction}
 t<\Phi:=(L_{t,r}^\tau-1)(L_{t,r}^\tau+1)^{1/k}.
\end{equation}

Assume first that $L_{t,r}=r$, equivalently $(r-1)^2\le t$.  Since
$k-1\ge t+r-1\ge r(r-1)$, one has $r^\tau\le r+1$.  For $r\ge3$ this follows
from
\[
 (k-1)\log(1+1/r)\ge\frac{r(r-1)}{r+1}\ge\log r;
\]
the $r=1,2$ cases are immediate.  Therefore
$
 \Phi\le r(r+2)^{1/k}.
$
For $r=1$ this is already less than $t$.  For $r=2$ it is at most
$2\cdot4^{1/5}<3\le t$; for $r=3$ it is at most
$3\cdot5^{1/7}<4\le t$; and for $r\ge4$ it is less than
$2r\le(r-1)^2\le t$.  Each alternative contradicts
\eqref{eq:Phi-contradiction}.

Now assume
$L_{t,r}=2+(t-1)/r$, equivalently $(r-1)^2\ge t$.  Let $a=\sqrt t$. So
$r\ge a+1$ and $L_{t,r}\le a+1$.  We claim
$L_{t,r}^\tau\le L_{t,r}+1$.  It is enough to show
$
 \log L_{t,r}\le(t+r-1)\log(1+1/L_{t,r}).
$
When $t\ge4$, use
$\log(1+1/L_{t,r})\ge1/(L_{t,r}+1)$ and
\begin{equation}\label{eq:add}
 \log(a+1)\le\frac{a(a+1)}{a+2}.
\end{equation}
Note that \eqref{eq:add} holds since the difference between the right- and left-hand sides of the last inequality
is positive at $a=2$ and has derivative
\[
 \frac{a^3+4a^2+2a-2}{(a+1)(a+2)^2}>0.
\]
Since $k\ge a^2+a+1$ and
$(a+3)^{1/(a^2+a+1)}\le4/3$, we obtain
\[
 \Phi\le L_{t,r}(L_{t,r}+2)^{1/k}
 \le\frac43(a+1)\le a^2=t,
\]
 a contradiction with \eqref{eq:Phi-contradiction}.  If $t=3$, then $r\ge3$ by $(r-1)^2\ge t$.
The expression $\Phi$ increases with $L_{t,r}$ and decreases with $k$: for fixed
$L_{t,r}>1$, both $L_{t,r}^{k/(k-1)}$ and the outer exponent $1/k$ decrease
with $k$.  Hence the worst case is $r=3$, $k=6$, and $L_{3,3}=8/3$.  Since
$(8/3)^{6/5}<13/4$,
\[
 \Phi<\frac94\left(\frac{17}{4}\right)^{1/6}<3=t,
\] a contradiction with \eqref{eq:Phi-contradiction}.

\smallskip
\noindent\emph{Case 2: $t=2$.}
First, we show that ~\eqref{eq:basic-numerical-condition} and
\eqref{eq:ground-set-gap} imply
\begin{equation}\label{eq:t-two-gap}
 n-(2k-t)\ge k-t-1.
\end{equation}
This is immediate for $k-t\le3$.  If $k-t\ge4$ and
$n-(2k-t)\le k-t-2$, the left side of
\eqref{eq:basic-numerical-condition} is at most $2(k-t)-3$, whereas the
right side is greater than $2(k-t)3^{-1/k}$.  But
\[
 3^{-1/k}>1-\frac{3}{2(k-t)},
\]
because $(1-3/(2(k-t)))^k<e^{-3/2}<1/3$, a contradiction.

If $r=1$, then $L_{2,1}=1$ and the desired strict lower bound follows from
$g_{k-t-r}>1$.  When $r=2$, $L_{2,2}=2$.  If $k-t-r=0$,
then $g_0\ge3^{3/4}>2$.  When $k-t-r\ge1$,  applying
\eqref{eq:ground-set-gap} to~\eqref{eq:universal-gain} yields
\[
 \begin{aligned}
 g_{k-t-r}^k
 &\ge
 \left(2+\frac1{k-t-r+1}\right)^3
 \left(2+\frac2{k-t-r}\right)^{k-t-r}\\
 &\ge2^{k-1}\left(1+\frac1{2(k-t-r)+2}\right)
              \left(1+\frac1{k-t-r}\right)^{k-t-r}\\
 &>2^k.
 \end{aligned}
\]
Finally, if $r\ge3$,~\eqref{eq:t-two-gap} gives
\[
 \frac{n-t-2r+1}{k-t-r+1}\ge3,
\]
and hence
\[
 g_{k-t-r}\ge3^{(k-1)/k}
 \ge3^{4/5}>\frac73\ge2+\frac1r=L_{2,r}.
\]

This completes the proof.
\end{proof}

We next translate the gain estimate into a combinatorial operation on a
Frankl family.  The relevant condition ensures that the sum of the values
$P_{\C_{s_r-1}(A\setminus\{s_r\})}(y)$ over a suitable boundary
subcollection is at least the value of $P$ on the old boundary layer.

\begin{lemma}\label{lem:adjacent-pull}
Fix $1\le r\le k-t$, put $s_r=t+2r$, and let
$\G_r=\binom{[s_r]}{t+r}$ be the minimal generating antichain of
$\A_r$.  For $a\in[s_r-1]$, define
\[
 \mathcal H_{r,a}:=
 \{G\in\G_r:s_r\notin G\}\cup
 \{G\setminus\{s_r\}:G\in\G_r,\ s_r\in G,\ a\notin G\},
 \qquad
 \mathcal Q_{r,a}:=\bigcup_{H\in\mathcal H_{r,a}}\D(H).
\]
Thus $\mathcal Q_{r,a}$ is obtained by trying to remove the last core
coordinate $s_r$: among the boundary generators, it pulls precisely those
omitting $a$ and discards the others.  If
\begin{equation}\label{eq:adjacent-pull-condition}
 g_{k-t-r}\ge B_{t,r}:=\frac{t+2r-1}{r}
\end{equation}
holds, then there is a coordinate $a\in[s_r-1]$ such that
$\mathcal Q_{r,a}$ is a copy of $\A_{r-1}$ and
\[
 \rho(\mathcal Q_{r,a})\ge\rho(\A_r).
\]
Moreover, if~\eqref{eq:adjacent-pull-condition} is strict, then
$\rho_{r-1}>\rho_r$.
\end{lemma}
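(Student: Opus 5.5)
We first check that each $\mathcal Q_{r,a}$ is a copy of $\A_{r-1}$. Then we pick $a$ by averaging, using the exact gain identity~\eqref{eq:old-new-trace-polynomials} and the tail symmetrization of Lemma~\ref{lem:tail-symmetrization}.

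\emph{Structure.} Write $s=s_r=t+2r$ and $q=k-t-r$. The boundary generators $G\ni s_r$ with $a\notin G$ give pulled sets $G\setminus\{s_r\}$, which are exactly the $(t+r-1)$-subsets of $[s_r-1]\setminus\{a\}$. Any non-boundary generator of size $t+r$ on $[s_r-1]$ either omits $a$, and then contains such a $(t+r-1)$-set, or contains $a$. Hence $\mathcal Q_{r,a}$ consists of the $k$-sets $F$ with $|F\cap([s_r-1]\setminus\{a\})|\ge t+r-1$. The set $[s_r-1]\setminus\{a\}$ has size $t+2r-2=t+2(r-1)$, and $t+r-1=t+(r-1)$. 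So $\mathcal Q_{r,a}$ is a copy of $\A_{r-1}$; in particular it is $t$-intersecting, although this is not needed here.

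\emph{Spectral comparison.} Take a Perron vector $x$ of $\A_r$. By the orbit symmetry in Proposition~\ref{prop:candidate-orbit-formula} we may take $x$ constant on $[s_r]$ and constant on the tail; relabeling, $x_1=\dots=x_{s}$, $\gamma=x_s$, and $\beta$ on the tail. The decomposition of Lemma~\ref{lem:trace-decomposition} holds for $\A_r$ directly, since $\A_r$ is maximal with minimal generating antichain $\G_r$:
\[
\A_r=\F^\circ\,\dot\cup\,\mathcal B(\G^\partial),
\]
where $\G^\partial=\{G\in\G_r:s\in G\}$ and all these boundary generators have size $k-q$. Define $y$ by tail symmetrization~\eqref{eq:tail-equalization}. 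Then $P_{\F^\circ}(x)\le P_{\F^\circ}(y)$, and for each $a$,
\[
P_{\mathcal Q_{r,a}}(y)\ \ge\ P_{\F^\circ}(y)+P_{\mathcal N(\mathcal S_a)}(y)=P_{\F^\circ}(y)+\Gamma_q(\eta)P_{\mathcal B(\mathcal S_a)}(x),
\]
where $\mathcal S_a=\{G\in\G^\partial:a\notin G\}$. The classes $\C_{s-1}(G\setminus\{s\})$ are disjoint from $\F^\circ$ because they have trace of size $t+r-1$ on $[s-1]$. Averaging over $a\in[s-1]$: each boundary generator omits exactly $(s-1)-(t+r-1)=r$ of the $s-1$ coordinates. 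Hence
\[
\frac1{s-1}\sum_a P_{\mathcal B(\mathcal S_a)}(x)=\frac{r}{t+2r-1}\,P_{\mathcal B(\G^\partial)}(x)=B_{t,r}^{-1}P_{\mathcal B(\G^\partial)}(x).
\]
By~\eqref{eq:adjacent-pull-condition}, $\Gamma_q(\eta)\ge g_q\ge B_{t,r}$, so some $a$ satisfies
\[
P_{\mathcal Q_{r,a}}(y)\ge P_{\F^\circ}(x)+P_{\mathcal B(\G^\partial)}(x)=\rho(\A_r).
\]
Since $\sum y_i^k=1$, this gives $\rho(\mathcal Q_{r,a})\ge\rho_r$. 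In fact, because the Perron vector is constant on the core, every $a$ gives the same value. The degenerate cases $\gamma=0$ or $\beta=0$ are handled by the continuity remark after Lemma~\ref{lem:universal-gain}. When $q=0$ we use the infimum $g_0$ in place of a minimum.

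\emph{Strictness.} If $g_q>B_{t,r}$, the inequality above is strict provided $P_{\mathcal B(\G^\partial)}(x)>0$. This holds because every vertex of $\A_r$ lies in an edge, and the Perron vector is positive on nonisolated vertices since $\A_r$ is weakly connected; so $\gamma>0$, and $\beta>0$ whenever $q>0$. Therefore $\rho_{r-1}\ge P_{\mathcal Q_{r,a}}(y)>\rho_r$.

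The main obstacle is the accounting rather than the estimate: we must check that $\mathcal Q_{r,a}$ contains $\F^\circ$ together with the new classes disjointly, and that the new classes $\C_{s-1}(A^-)$ are evaluated exactly by~\eqref{eq:gain-factor}. This requires $A^-\cup Z$, with $Z\subseteq\{s,\dots,n\}$, to have trace exactly $A^-$ on $[s-1]$, which holds by definition. We also need the averaging count $r/(t+2r-1)$ to be correct.
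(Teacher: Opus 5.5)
Your proposal is correct and follows essentially the same route as the paper. Both proofs use the trace decomposition of $\A_r$, Perron tail symmetrization, averaging over the omitted coordinate $a$ (each boundary generator omits exactly $r$ of the $s_r-1$ points), and the universal gain bound $\Gamma_{k-t-r}(\eta)\ge g_{k-t-r}\ge B_{t,r}$. The differences are cosmetic: your orbit-symmetric Perron vector makes every choice of $a$ equivalent, and you obtain $P_{\mathcal B(\G^\partial)}(x)>0$ from positivity of the Perron vector rather than from the strict Perron--Frobenius comparison. Also, in the structural step you leave implicit that a core generator containing $a$ still contains a pulled set, namely the one obtained by deleting $a$.
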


\begin{proof}
Here the gain $g_{k-t-r}$ is computed with $n-t-2r$ tail coordinates.
The minimal generators of $\A_r$ are all $(t+r)$-subsets of its core
$[t+2r]$.
Let $x$ be a Perron vector of $\A_r$ after the tail equalization above, and
let $y$ be defined from $x$ by~\eqref{eq:tail-equalization}, with $s=s_r$.
Put
\[
 \G_r^{\partial}:=\{A\in\G_r:s_r\in A\},
 \qquad
 \mathcal B_r^{\partial}
 :=\dotcupbig_{A\in\G_r^{\partial}}\C_{s_r}(A),
 \qquad
 \mathcal B_{r,a}^{\partial}
 :=\dotcupbig_{\substack{A\in\G_r^{\partial}\\a\notin A}}\C_{s_r}(A).
\]
Every $A\in\G_r^{\partial}$ omits exactly $r$ points of $[s_r-1]$.  Since
the families $\C_{s_r}(A)$ are pairwise disjoint, linearity of $P$ gives
\[
 \sum_{a=1}^{s_r-1}
 P_{\mathcal B_{r,a}^{\partial}}(x)
 =rP_{\mathcal B_r^{\partial}}(x).
\]
Choose $a\in[s_r-1]$ such that
\begin{equation}\label{eq:adjacent-selected-contribution}
 P_{\mathcal B_{r,a}^{\partial}}(x)
 \ge\frac{r}{s_r-1}P_{\mathcal B_r^{\partial}}(x).
\end{equation}

By definition, $\mathcal H_{r,a}$ deletes the entire boundary layer and
replaces each boundary generator $A$ omitting $a$ by
$A\setminus\{s_r\}$.  These inserted sets are precisely all
$(t+r-1)$-subsets of $[s_r-1]\setminus\{a\}$.  Every old nonboundary generator
contains one of the newly inserted sets, so after deleting redundant generators the resulting
family is exactly $\A_{r-1}$ on the core
$[s_r-1]\setminus\{a\}$.

Set
\[
 \mathcal N_{r,a}
 :=\dotcupbig_{\substack{A\in\G_r^{\partial}\\a\notin A}}
   \C_{s_r-1}(A\setminus\{s_r\}).
\]
Formula~\eqref{eq:gain-factor} and
Lemma~\ref{lem:universal-gain} give
\[
 P_{\mathcal N_{r,a}}(y)
 =\Gamma_{k-t-r}(\eta)P_{\mathcal B_{r,a}^{\partial}}(x)
 \ge g_{k-t-r}P_{\mathcal B_{r,a}^{\partial}}(x).
\]
Therefore~\eqref{eq:adjacent-selected-contribution} and
\eqref{eq:adjacent-pull-condition} imply
\[
 P_{\mathcal N_{r,a}}(y)
 \ge \frac{r}{s_r-1}g_{k-t-r}
       P_{\mathcal B_r^{\partial}}(x)
 \ge P_{\mathcal B_r^{\partial}}(x).
\]
Together with Lemmas~\ref{lem:trace-decomposition} and
\ref{lem:tail-symmetrization}, this yields
$P_{\mathcal Q_{r,a}}(y)\ge P_{\A_r}(x)=\rho_r$.  Since
$\mathcal Q_{r,a}$ is a copy of $\A_{r-1}$, it follows that
$\rho_{r-1}\ge\rho_r$.  Moreover,
$P_{\mathcal B_r^{\partial}}(x)>0$ by the
Perron--Frobenius comparison above, so strictness in
\eqref{eq:adjacent-pull-condition} gives $\rho_{r-1}>\rho_r$.
\end{proof}

We can now prove the propagation statement that reduces the main theorem to
the first comparison $\rho_0\ge\rho_1$.

\begin{proposition}\label{prop:propagation}
Let $2\le t<k$ and $n>2k-t$.  If $\rho_0\ge\rho_1$, then
\begin{equation}\label{eq:candidate-sequence}
 \rho_0\ge\rho_1\ge\cdots\ge\rho_{k-t}.
\end{equation}
\end{proposition}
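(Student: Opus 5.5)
The plan is to combine Lemma~\ref{lem:adjacent-pull} with Lemma~\ref{lem:middle-envelope}. Lemma~\ref{lem:middle-envelope} gives $g_{k-t-r}>L_{t,r}$, and Lemma~\ref{lem:adjacent-pull} needs $g_{k-t-r}\ge B_{t,r}=(t+2r-1)/r$. The two bounds coincide, $L_{t,r}=B_{t,r}$, exactly when $(t+2r-1)/r\le r$, that is, when $(r-1)^2\ge t$. For those $r$ the comparison $\rho_{r-1}>\rho_r$ follows at once. The remaining small values of $r$, where $L_{t,r}=r<B_{t,r}$, need a separate argument. There is also a logical obstacle: Lemma~\ref{lem:middle-envelope} assumes \eqref{eq:ground-set-gap} and \eqref{eq:basic-numerical-condition}, and Lemma~\ref{lem:numerical-consequences} derives these from dominance over \emph{all} Frankl families, which is the conclusion we want.

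\emph{Step 1: obtain the hypotheses from $\rho_0\ge\rho_1$ alone.} The proof of \eqref{eq:basic-numerical-condition} in Lemma~\ref{lem:numerical-consequences} uses only $\rho_1\le\rho_0$, by evaluating $\A_1$ at the star's Perron vector. So that half transfers verbatim. For \eqref{eq:ground-set-gap}, I need to show directly that $n=2k-t+1$ forces $\rho_1>\rho_0$. I would use the orbit formula of Proposition~\ref{prop:candidate-orbit-formula}, or evaluate $P_{\A_1}$ at the star's Perron vector. When $n=2k-t+1$, the cancellation identity from Lemma~\ref{lem:numerical-consequences} requires $n-k-1=k-t\ge t(k-t)((k-t)/(n-t))^{1/k}$. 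For $t\ge2$ this fails, because $((k-t)/(2k-2t+1))^{1/k}>1/2$ when $k\ge2$. This step looks routine: it is a one-line check that \eqref{eq:basic-numerical-condition} already excludes $n=2k-t+1$ when $t\ge2$.

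\emph{Step 2: the ranges of $r$.} Once both conditions hold, Lemma~\ref{lem:middle-envelope} applies for every $1\le r\le k-t$. If $(r-1)^2\ge t$, Lemma~\ref{lem:adjacent-pull} gives $\rho_{r-1}>\rho_r$ directly. If $(r-1)^2<t$, the bound $g>r$ is weaker than what the pull needs. Here I would strengthen the selection inside Lemma~\ref{lem:adjacent-pull}: instead of averaging over $a\in[s_r-1]$, average over the Perron weights. The Perron coordinates are ordered, and by symmetry of $\A_r$ the first $s_r-1$ core coordinates can be taken equal. In that case every $a$ contributes equally, but the total is still the factor $r/(s_r-1)$, so symmetry alone does not help. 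The alternative I would pursue is a direct bound for these small $r$: show that $g_{k-t-r}\ge B_{t,r}$ still holds, with $B_{t,r}\le(t+1)/1$ when $r=1$. From the estimate \eqref{eq:g-H-bound} together with \eqref{eq:H-condition}, one gets roughly $g^{k/(k-1)}\gtrsim 1+t\,((k-t)/(n-t))^{1/k}$. This should be compared with $B_{t,r}^{k/(k-1)}$ using the same $\Phi$-style contradiction, now with $L$ replaced by $B_{t,r}$.

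\emph{Main obstacle.} The case $r=1$ is the hardest. There $B_{t,1}=t+1$, and the pull condition $g_{k-t-1}\ge t+1$ essentially mirrors the $\rho_0$ versus $\rho_1$ comparison itself. In that case I would bypass Lemma~\ref{lem:adjacent-pull}, since $\rho_0\ge\rho_1$ is exactly the hypothesis. So only the range $2\le r$ with $(r-1)^2<t$ needs the sharpened gain estimate. Chaining $\rho_{r-1}\ge\rho_r$ over all $r$ then yields \eqref{eq:candidate-sequence}.
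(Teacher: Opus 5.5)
Your outline matches the paper's proof. Step~1 is exactly the paper's opening: \eqref{eq:basic-numerical-condition} comes from $\rho_0\ge\rho_1$, and $n=2k-t+1$ is excluded because $2^k(k-t)>2(k-t)+1$. The case $r=1$ is the hypothesis. For $2\le r$ with $(r-1)^2\ge t$ one has $L_{t,r}=B_{t,r}$, so Lemma~\ref{lem:middle-envelope} feeds Lemma~\ref{lem:adjacent-pull}. The gap is the remaining case $2\le r$, $t>(r-1)^2$, which is most of the paper's proof. You leave it as ``compare using the same $\Phi$-style contradiction,'' and that method fails in part of this range. The $\Phi$ argument uses only the crude bound \eqref{eq:g-H-bound} and \eqref{eq:H-condition}. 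For some admissible parameters these two inequalities are compatible with the crude bound lying below $B_{t,r}$, so no contradiction can come from them. Take $(t,r,k,n)=(2,2,4,9)$. Both \eqref{eq:ground-set-gap} and \eqref{eq:basic-numerical-condition} hold, since $4\ge 4(2/7)^{1/4}\approx 2.92$. Yet \eqref{eq:g-H-bound} gives only $g_0>3^{3/4}\approx 2.28<5/2=B_{2,2}$, and correspondingly $\Phi_{2,2,4}\approx 3.46>2=t$. The true value is $g_0=4^{3/4}\approx 2.83$, so the statement holds, but your tool cannot see it. The same failure occurs for $r=2$ and $t=3,4,5$ at small $k$; for example $t=5$, $k=7$ gives $\Phi\approx 5.22>5$.

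The paper uses your $\Phi$-contradiction, with $B_{t,r}$ in place of $L_{t,r}$, only where it works. For $r\ge 3$ one has $t\ge 5$ and $k\ge t+3$, giving $\Phi\le (2t+5)/4\le t$. It also works for $r=2$ with $t\ge 6$. For the four pairs $(t,r)\in\{(2,2),(3,2),(4,2),(5,2)\}$, the paper drops \eqref{eq:g-H-bound} and works with the exact gain \eqref{eq:universal-gain}. First it extracts sharper gap conditions from \eqref{eq:basic-numerical-condition}:
\begin{itemize}
\item $n-(2k-t)\ge t+1$ when $k=t+2$;
\item $n-(2k-t)\ge 2t-1$ when $k=t+3$;
\item \eqref{eq:small-t-gap}, $n-(2k-t)\ge (t-1)(k-t-2)+1$, when $k\ge t+4$.
\end{itemize}
It then uses a monotonicity-in-$u$ argument for the ratio $g_{k-t-2}^k/B_{t,2}^k$, plus a direct check when $t=2$ and $k-t-2=2$. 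Without this refined analysis or a substitute, your Step~2 does not prove $g_{k-t-2}\ge B_{t,2}$, and hence does not prove $\rho_1\ge\rho_2$, for $2\le t\le 5$. So the chain \eqref{eq:candidate-sequence} is not yet established.
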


\begin{proof}
The comparison $\rho_0\ge\rho_1$ alone suffices to yield
\eqref{eq:basic-numerical-condition}; this follows from the second half of the proof of
Lemma~\ref{lem:numerical-consequences}.  It also forces
\eqref{eq:ground-set-gap}.  Indeed, if $n=2k-t+1$, then
\eqref{eq:basic-numerical-condition} becomes
\[
 1\ge t\left(\frac{k-t}{2(k-t)+1}\right)^{1/k},
\]
whereas
\[
 t^k(k-t)\ge2^{k-t+2}(k-t)>2(k-t)+1.
\]

Fix $2\le r\le k-t$, and let
\[
 B_{t,r}=2+\frac{t-1}{r}.
\]
We prove $g_{k-t-r}\ge B_{t,r}$.  If $B_{t,r}\le r$, this follows from
Lemma~\ref{lem:middle-envelope}.  We may therefore assume
\begin{equation}\label{eq:B-large}
 B_{t,r}>r,
 \qquad\text{equivalently}\qquad t>(r-1)^2.
\end{equation}
Let
\[
 \qquad \kappa:=\frac{k}{k-1}.
\]
The estimates~\eqref{eq:g-H-bound} and~\eqref{eq:H-condition} use only
\eqref{eq:ground-set-gap} and~\eqref{eq:basic-numerical-condition}.  Suppose $g_{k-t-r}<B_{t,r}$. Then
\begin{equation}\label{eq:Phi-B}
 t<\Phi_{t,r,k}:=(B_{t,r}^\kappa-1)(B_{t,r}^\kappa+1)^{1/k}.
\end{equation}

Assume first that $r\ge3$.  Condition~\eqref{eq:B-large} gives $t\ge5$,
$B_{t,r}\le(t+5)/3$, and $k\ge t+3$.  The elementary inequalities
\[
 \left(\frac{t+5}{3}\right)^{1/(t+2)}\le\frac65,
 \qquad
 \left(\frac{2t+15}{5}\right)^{1/(t+3)}\le\frac54
 \qquad(t\ge5)
\]
hold because the logarithms of the right-hand sides minus the left-hand sides
are positive at $t=5$, and their derivatives are respectively
$\log(6/5)-1/(t+5)>0$ and
$\log(5/4)-2/(2t+15)>0$.  Therefore
\[
 B_{t,r}^\kappa\le\frac{2(t+5)}5,
 \qquad
 \Phi_{t,r,k}\le\frac{2t+5}{5}\cdot\frac54
 =\frac{2t+5}{4}\le t,
\]
a contradiction with~\eqref{eq:Phi-B}.

Now let $r=2$.  If $t\ge7$, then $B_{t,r}=(t+3)/2$, $k\ge t+2$, and
\[
 \left(\frac{t+3}{2}\right)^{1/(t+1)}\le\frac54,
 \qquad
 \left(\frac{5t+23}{8}\right)^{1/(t+2)}\le\frac43.
\]
Indeed, after taking logarithms, both inequalities hold at $t=7$ and remain
valid because
$\log(5/4)-1/(t+3)>0$ and
$\log(4/3)-5/(5t+23)>0$.
It follows that
\[
 \Phi_{t,2,k}\le\frac{5t+7}{8}\cdot\frac43
 =\frac{5t+7}{6}\le t,
\]
a contradiction with~\eqref{eq:Phi-B}.
For $t=6$, writing $B_{t,r}=9/2$ and using $k\ge8$ gives
\[
 B_{t,r}^\kappa\le B_{t,r}^{8/7}<\frac{28}{5},
 \qquad
 (B_{t,r}^\kappa+1)^{1/k}<\left(\frac{33}{5}\right)^{1/8}<\frac{13}{10},
\]
Thus $\Phi_{6,2,k}<(23/5)(13/10)<6=t$, a contradiction
with~\eqref{eq:Phi-B}.  The remaining pairs are
\[
 (t,r)\in\{(2,2),(3,2),(4,2),(5,2)\}.
\]

For these four pairs, $B_{t,r}=(t+3)/2$.  If $k-t-2=0$, then
\eqref{eq:basic-numerical-condition} forces
$n-(2k-t)\ge t+1$.  Indeed, if $n-(2k-t)\le t$, then
\[
 n-(2k-t)+1
 \ge2t\left(\frac2{4+n-(2k-t)}\right)^{1/(t+2)}
 \ge2t\left(\frac2{t+4}\right)^{1/(t+2)}>t+1,
\]
contradicting $n-(2k-t)+1\le t+1$.  Hence
\[
 g_0=(n-(2k-t)+1)^{(t+1)/(t+2)}
 \ge(t+2)^{(t+1)/(t+2)}>B_{t,r}.
\]
If $k-t-2=1$, the same argument forces
$n-(2k-t)\ge2t-1$.  Indeed, if $n-(2k-t)\le2t-2$, then
\[
 n-(2k-t)+2
 \ge3t\left(\frac3{6+n-(2k-t)}\right)^{1/(t+3)}
 \ge3t\left(\frac3{2t+4}\right)^{1/(t+3)}>2t.
\]
This contradicts $n-(2k-t)+2\le2t$.  We then obtain
\[
 g_1^{t+3}
 =\left(\frac{n-(2k-t)+3}{2}\right)^{t+1}
  \bigl(n-(2k-t)+2\bigr)
 \ge(t+1)^{t+1}(2t+1)>B_{t,r}^{t+3}.
\]

It remains to consider $k-t-2\ge2$.  For $2\le t\le5$,
\eqref{eq:basic-numerical-condition} implies
\begin{equation}\label{eq:small-t-gap}
 n-(2k-t)\ge(t-1)(k-t-2)+1.
\end{equation}
Indeed, if $n-(2k-t)\le(t-1)(k-t-2)$, the left side of
\eqref{eq:basic-numerical-condition} is at most $t(k-t-2)+1$, whereas its
right side is greater than
\[
 t(k-t)
 \left(\frac{k-t}{(t+1)(k-t-2)+4}\right)^{1/k}
 >t(k-t)(t+1)^{-1/k}>t(k-t-2)+1.
\]
For the last step use $e^{-u}>1-u$ and
$t\log(t+1)<2t-1$ for $2\le t\le5$.

From~\eqref{eq:small-t-gap},
\[
 g_{k-t-2}^{k}\ge
 \left(t+1-\frac{t-1}{k-t-1}\right)^{t+1}
 \left(t+1+\frac1{k-t-2}\right)^{k-t-2}.
\]
For an integer $u\ge2$, the ratio
\[
 \frac{\left(t+1-\frac{t-1}{u+1}\right)^{t+1}
       \left(t+1+\frac1u\right)^u}
      {B_{t,r}^{t+u+2}}
\]
increases with $u$: the first factor in the numerator increases, while
\[
 \frac{(t+1+1/u)^u}{B_{t,r}^u}
 =\left(\frac{t+1}{B_{t,r}}\right)^u
  \left(1+\frac1{(t+1)u}\right)^u
\]
also increases.  For $t=3,4,5$ the displayed ratio already exceeds $1$ at
$u=2$, as witnessed respectively by
\[
 \left(\frac{10}{3}\right)^4\left(\frac92\right)^2>3^7,
 \qquad
 4^5\left(\frac{11}{2}\right)^2>\left(\frac72\right)^8,
 \qquad
 \left(\frac{14}{3}\right)^6\left(\frac{13}{2}\right)^2>4^9.
\]
For $t=2$, the same monotonicity starts at $u=3$, where
\[
 \left(\frac{11}{4}\right)^3\left(\frac{10}{3}\right)^3>
 \left(\frac52\right)^7.
\]
In the remaining case $t=2$ and $k-t-2=2$,
\eqref{eq:basic-numerical-condition} gives $n-(2k-t)\ge4$ (the value $3$
would require $6\ge8(4/11)^{1/6}>6$).  Consequently,
\[
 g_2^6\ge3^3 4^2=432>\left(\frac52\right)^6.
\]
Thus $g_{k-t-r}\ge B_{t,r}$ in every case.
Lemma~\ref{lem:adjacent-pull} gives
$\rho_{r-1}\ge\rho_r$, and iteration proves~\eqref{eq:candidate-sequence}.
\end{proof}

\section{The support-reducing boundary pull and the Frankl-family comparison principle}
\label{sec:proof}

Retain the pair $(\F,x)$, its minimal generating antichain $\G$, and
$s=\sigma(\G)$ from Section~\ref{sec:generators}.  We use throughout the
single pull notation $\mathcal H(\mathcal S),\mathcal Q(\mathcal S)$ defined
in~\eqref{eq:macro-generators}.  The aim of this section is now explicit:
choose $\mathcal S\subseteq\G^{\partial}$ so that
$\mathcal H(\mathcal S)$ is $t$-intersecting and
$\rho(\mathcal Q(\mathcal S))\ge\rho(\F)$.  Since every member of
$\mathcal H(\mathcal S)$ lies in $[s-1]$, such a choice lowers the support
parameter from $s$ to at most $s-1$.

By Lemma~\ref{lem:tight-pairs}, the boundary ranks occur in complementary
pairs $p,q$ satisfying $p+q=s+t$.  For $p<q$, put
$\mathcal S_p=\G_p^{\partial}$ and $\mathcal S_q=\G_q^{\partial}$.
Lemma~\ref{lem:profitable-off-middle} gives an $\ell\in\{p,q\}$ satisfying
\eqref{eq:off-middle-polynomial}.  If $p=q$, that orbit contains only one rank;
Lemmas~\ref{lem:frequent-omission} and~\ref{cor:profitable-middle} select a
subcollection of its shadows with a common omitted coordinate.  Finally,
Lemmas~\ref{lem:macro-intersection} and~\ref{lem:spectral-ledger} combine all
these local choices and verify the two requirements above.

Throughout the section, assume that the full $t$-star dominates the Frankl
families.  Whenever a diagonal boundary rank has parameter $r$, we additionally
assume
\begin{equation}\label{eq:candidate-separation}
 \rho(\F)>\rho_r.
\end{equation}
This holds in both applications below: in the value argument
$\rho(\F)>\rho_0\ge\rho_r$, while in the rigidity argument
$\rho(\F)=\rho_0>\rho_r$.  Dominance over the Frankl families also gives
$n\ge2k-t+2$ by Lemma~\ref{lem:numerical-consequences}.

\subsection{The off-diagonal boundary pull}\label{subsec:off-diagonal}

The involution on boundary ranks supplied by~\eqref{eq:rank-pairing} is
$i\mapsto s+t-i$.  We first treat one of its two-element orbits.

For a boundary layer of rank $k-q$, let
\[
 \mathcal B_q:=\mathcal B(\G_{k-q}^{\partial}),
 \qquad
 \Lambda_q(\eta):=\Gamma_q(\eta)-1.
\]
\begin{lemma}\label{lem:profitable-off-middle}
Let $p<q$ be boundary ranks with
\[
 p+q=s+t.
\]
Put $\mathcal S_p:=\G_p^{\partial}$ and
$\mathcal S_q:=\G_q^{\partial}$.  Then, for some $\ell\in\{p,q\}$,
\begin{equation}\label{eq:off-middle-polynomial}
 P_{\mathcal N(\mathcal S_\ell)}(y)
 =\Gamma_{k-\ell}(\eta)P_{\mathcal B_{k-\ell}}(x)
 \ge P_{\mathcal B_{k-p}}(x)+P_{\mathcal B_{k-q}}(x).
\end{equation}
If
$P_{\mathcal B_{k-p}}(x)+P_{\mathcal B_{k-q}}(x)>0$, then
\eqref{eq:off-middle-polynomial} is strict.
\end{lemma}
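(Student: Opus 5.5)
Write $a:=P_{\mathcal B_{k-p}}(x)$ and $b:=P_{\mathcal B_{k-q}}(x)$, and put $q_1:=k-p$, $q_2:=k-q$. The equality in \eqref{eq:off-middle-polynomial} is just \eqref{eq:old-new-trace-polynomials} applied to the single-rank collection $\mathcal S_\ell$. So everything reduces to showing
\[
 \max\{\Gamma_{q_1}(\eta)\,a,\ \Gamma_{q_2}(\eta)\,b\}\ge a+b,
\]
with strict inequality when $a+b>0$. The degenerate cases $\gamma=0$ or $\beta=0$ are handled by the continuity remark after Lemma~\ref{lem:universal-gain}, so I will assume $\gamma,\beta>0$.

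The plan is a weighted averaging argument. Suppose $a+b>0$ and both alternatives fail strictly, so that $\Gamma_{q_1}a<a+b$ and $\Gamma_{q_2}b<a+b$. Dividing and adding gives
\[
 a+b<(a+b)\Bigl(\frac1{\Gamma_{q_1}(\eta)}+\frac1{\Gamma_{q_2}(\eta)}\Bigr).
\]
Hence it suffices to prove the $\eta$-free inequality
\[
 \frac1{g_{q_1}}+\frac1{g_{q_2}}<1,
\]
using $\Gamma_q\ge g_q$ from Lemma~\ref{lem:universal-gain}. The same bound then yields strictness. Indeed, for whichever index $\ell$ satisfies $\Gamma_\ell c_\ell\ge (a+b)\,\Gamma_\ell/(\Gamma_{q_1}+\dots)$, the weights $1/\Gamma$ summing to less than $1$ force some $\Gamma_{q_i}c_i>a+b$. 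When $a+b=0$ the claim is trivial.

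The core step is the numerical bound $g_{q_i}>2$ for $i=1,2$, which is more than enough. I will use the following facts.
\begin{itemize}
 \item By \eqref{eq:q-range}, $q_1+q_2=2k-s-t$, and $q_1\neq q_2$ because $p<q$.
 \item $n\ge2k-t+2$, by Lemma~\ref{lem:numerical-consequences}.
 \item For $q\ge1$, $(n-s)/q>(n-s+1)/(q+1)$. This gives $g_q\ge\bigl((n-s+1)/(q+1)\bigr)^{(k-1)/k}$, and the same bound holds for $q=0$ by \eqref{eq:gain-q-zero}.
\end{itemize}
Since $q\le 2k-s-t$, I would bound
\[
 \frac{n-s+1}{q+1}\ \ge\ \frac{n-s+1}{2k-s-t+1}.
\]
This ratio is increasing in $s$ because $n>2k-t$. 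I would then use the smallest admissible $s$: the tight pair of Lemma~\ref{lem:tight-pairs} has two boundary generators of sizes at most $k$, and both differ from the intersection, so $s\ge t+2$. This gives the lower bound
\[
 \frac{n-t-1}{2k-2t-1}\ \ge\ 1+\frac{n-k-1}{k-t},
\]
which is exactly the quantity appearing in \eqref{eq:g-H-bound}. Then I would feed in \eqref{eq:H-condition} with $t\ge2$. Writing $h=(n-k-1)/(k-t)$, that condition reads $h\,(h+1+1/(k-t))^{1/k}\ge2$. From this I want to derive $(1+h)^{(k-1)/k}>2$, arguing as in Case~2 of Lemma~\ref{lem:middle-envelope}: the factor $(h+2)^{1/k}$ is close to $1$, so $h$ is essentially at least $2$, while $(1+h)^{(k-1)/k}\ge 3^{(k-1)/k}>2$ once $k\ge3$.

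The main obstacle is this final numerical step. The crude route via $g_q>2$ may leak at small $k$ or when $h$ is just below $2$. If it does, I would fall back on using the two gains jointly. Since $q_1\ne q_2$, the smaller of the two $q_i$ is at most $(2k-s-t-1)/2$, so its ratio $(n-s+1)/(q+1)$ is roughly doubled. That makes $1/g_{\min}$ small enough to absorb a $1/g_{\max}$ that is merely below $1$, for which $g_q>1$ is already available exactly as in Lemma~\ref{lem:middle-envelope}.
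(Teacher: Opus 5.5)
Your reduction is the right one and matches the paper's. The lemma is equivalent to the $\eta$-free inequality $1/g_{k-p}+1/g_{k-q}<1$; the paper writes this as $(g_u-1)(g_v-1)>1$, i.e.\ $c_u+c_v<1$, with $u=k-p$, $v=k-q$. Your averaging argument also gives strictness when $a+b>0$, since $a\le(a+b)/\Gamma_{k-p}$ and $b\le(a+b)/\Gamma_{k-q}$ cannot both hold.

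The gap is in the core numerical step, in two places.

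First, the displayed bound $\frac{n-t-1}{2k-2t-1}\ge 1+\frac{n-k-1}{k-t}$ is false whenever $k-t\ge2$. The right side equals $\frac{n-t-1}{k-t}$, and $2k-2t-1>k-t$. Using only $q\le 2k-s-t$ and $s\ge t+2$, you get about half of the quantity in \eqref{eq:g-H-bound}. That bound belongs to the diagonal case, where $s=t+2r$ and $q=k-t-r$ are tied together.

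Second, the target itself, $g_{q_i}>2$ for each $i$, is not true in the generality required. Even using $q_i\le k-t-1$, the worst single off-diagonal layer is $v=0$, $u=k-t-1$, $s=k+1$, with ratio $(n-k)/(k-t)$.
\begin{itemize}
\item For $t=1$, this lemma is also invoked (Proposition~\ref{prop:t-one-candidate-comparison}), and dominance holds for every $n\ge2k+1$. At $n=2k+1$ one gets $g_{k-2}=\bigl(\tfrac{k+1}{k-1}\bigr)^{1/k}\bigl(\tfrac{k}{k-2}\bigr)^{(k-2)/k}\to1$; for example $g_8\approx1.22$ at $k=10$. Your argument also needs $t\ge2$ through \eqref{eq:H-condition}, so it does not cover this case at all.
\item For $t=2$, the two inputs you use do not suffice. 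The triple $(t,k,n)=(2,22,62)$ satisfies \eqref{eq:ground-set-gap} and \eqref{eq:basic-numerical-condition}. Yet with $s=23$, $p=3$, $q=22$ one has $g_{19}=2^{1/11}(39/19)^{19/22}\approx1.98<2$.
\end{itemize}

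The missing idea is that the two gains must be estimated jointly, using the coupling $u+v=2k-s-t$ and $N=n-s\ge u+v+2$. When one deficit is large, its gain may be only $1+O(1/u)$, and the other layer must pay for it. Your fallback gestures at this, but ``roughly doubled'' is not a quantitative bound, and the margin really is thin. In the extremal case $N=u+v+2$, $k=u+1$, $v=0$, one has $c_u\approx1-2/u$ and $c_v\approx1/u$. Moreover, at $(k,u,v,N)=(2,1,0,3)$ the joint inequality is false, since $3^{-1/2}+\tfrac12>1$. That configuration has to be excluded combinatorially through the antichain property.

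The paper handles this as follows.
\begin{itemize}
\item It shows that each $c_j$ decreases in $N$ and in $k$, which reduces everything to $N=u+v+2$, $k=u+1$.
\item It bounds $c_u+c_v$ there by weighted H\"older and uses $E(u,v)\ge E(u,0)>\log2$ for $u\ge3$.
\item It checks the small cases by hand.
\end{itemize}
This uses only $n\ge2k-t+2$, so it covers $t=1$ as well.
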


\begin{proof}
Put $u:=k-p$, $v:=k-q$, and $N:=n-s$.  Then $u>v\ge0$ and
Lemma~\ref{lem:tight-pairs} gives
\begin{equation}\label{eq:off-diagonal-parameters}
 u+v=2k-s-t,
 \qquad N\ge u+v+2.
\end{equation}
We first prove the numerical inequality
\begin{equation}\label{eq:pair-product}
 (g_u-1)(g_v-1)>1.
\end{equation}
The only formal exception to this inequality is
$(k,u,v,N)=(2,1,0,3)$; it would give complementary boundary generators of
sizes $1$ and $2$, forcing $t=1$, $s=2$, and the two generators
$\{2\}$ and $[2]$, contrary to the antichain property.

Let $c_j=1/g_j$.  Inequality~\eqref{eq:pair-product} is equivalent to
\begin{equation}\label{eq:c-sum}
 c_u+c_v<1,
\end{equation}
where
\[
 c_j=\left(\frac jN\right)^{j/k}
     \left(\frac{j+1}{N+1}\right)^{(k-j-1)/k},
\]
with the first factor interpreted as $1$ for $j=0$.  For fixed $j$, this
quantity decreases with $N$.  It also decreases with $k$; for $j=0$ this is
immediate.  For $j\ge1$, the latter assertion is equivalent, with $a=j/N$
and $b=(j+1)/(N+1)$, to $a^j\ge b^{j+1}$, or
\[
 \frac{(N+1)^{j+1}}{N^j}\ge\frac{(j+1)^{j+1}}{j^j};
\]
this follows because $(x+1)^{j+1}/x^j$ increases for $x\ge j$.  The worst
case is therefore $N=u+v+2$ and $k=u+1$.

In this case, weighted H\"older gives
\[
 \begin{aligned}
 c_u+c_v
 &\le 2^{1/(u+1)}
 \left(\frac{u+v}{N}\right)^{v/(u+1)}
 \left(\frac uN+\frac{v+1}{N+1}\right)^{(u-v)/(u+1)}\\
 &=2^{1/(u+1)}
 \left(1-\frac2N\right)^{v/(u+1)}
 \left(1-\frac{u+2v+4}{N(N+1)}\right)^{(u-v)/(u+1)}.
 \end{aligned}
\]
Using $\log(1-x)\le-x$, the logarithm of the right-hand side, multiplied by
$u+1$, is at most $\log2-E(u,v)$, where
\[
 E(u,v):=\frac{2v}{N}
 +\frac{(u-v)(u+2v+4)}{N(N+1)}.
\]
A direct simplification gives
\[
 E(u,v)-E(u,0)
 =-\frac{v\bigl(vu^2+4vu-u^3-4u^2-8u-12\bigr)}
 {(u+2)(u+3)(u+v+2)(u+v+3)}\ge0,
\]
because $v\le u-1$.  Hence, for $u\ge3$,
\[
 E(u,v)\ge E(u,0)=\frac{u(u+4)}{(u+2)(u+3)}
 \ge\frac7{10}>\log2,
\]
which proves~\eqref{eq:c-sum}.  For $u=2$, the cases $v=0,1$ reduce to
\[
 2^{-2/3}+5^{-2/3}<1,
 \qquad
 (2/5)^{2/3}+15^{-1/3}<1.
\]
For $u=1,v=0$, monotonicity in $k$ and $N$ reduces all nonexceptional cases
to $(k,N)=(2,4)$ or $(3,3)$, where
\[
 \frac12+\frac1{\sqrt5}<1,
 \qquad
 6^{-1/3}+4^{-2/3}<1.
\]
This proves~\eqref{eq:pair-product}.

Since $\Gamma_j(\eta)\ge g_j$, the numerical inequality gives
\[
 \Lambda_u(\eta)\Lambda_v(\eta)>1.
\]
It follows that at least one of
\begin{equation}\label{eq:off-middle-ledger}
 \Lambda_u(\eta)P_{\mathcal B_u}(x)-P_{\mathcal B_v}(x)\ge0,
 \qquad
 \Lambda_v(\eta)P_{\mathcal B_v}(x)-P_{\mathcal B_u}(x)\ge0
\end{equation}
holds.  Indeed, if
$P_{\mathcal B_u}(x),P_{\mathcal B_v}(x)>0$ and both quantities were
nonpositive, multiplication would contradict the product inequality.
Therefore at least one of them is strictly positive.  If exactly one of the
two polynomial values is positive, select its layer; since
$\Gamma_j(\eta)>1$, the corresponding inequality is again strict.  If both
values are zero, either choice works.  Selecting the $p$-rank when the first
quantity is nonnegative or the $q$-rank when the second is nonnegative gives
\eqref{eq:off-middle-polynomial}, with the asserted strictness.
\end{proof}

Thus, from every two-element orbit of the boundary-rank involution, one whole
rank can be selected so that~\eqref{eq:off-middle-polynomial} holds.  The only
remaining orbit is a fixed point, which exists exactly when the middle rank
below is integral.

\subsection{The diagonal stage}\label{subsec:diagonal}

The remaining fixed point exists only when $s-t$ is even.  In that case let
\begin{equation}\label{eq:diagonal-parameters}
 r:=\frac{s-t}{2}.
\end{equation}
Then $1\le r\le k-t$, and the corresponding boundary deficit is
$k-t-r$.  For
$E\in\G_{t+r}^{\partial}$ define its omitted set
\[
 C_E:=[s-1]\setminus(E\setminus\{s\}),
 \qquad |C_E|=r.
\]
For each $a\in[s-1]$, the corresponding candidate output pulls exactly the
middle-rank generators omitting $a$:
\[
 \mathcal S_a'
 :=\{E\in\G_{t+r}^{\partial}:a\in C_E\},
 \qquad
 \mathcal H_a'
 :=\G^{\circ}\cup
   \{E\setminus\{s\}:E\in\mathcal S_a'\},
 \qquad
 \mathcal Q_a'
 :=\bigcup_{H\in\mathcal H_a'}\D(H).
\]
Define
\[
 \mathcal B'
 :=\mathcal B(\G_{t+r}^{\partial}),
 \qquad
 \mathcal B_a'
 :=\mathcal B(\mathcal S_a'),
 \qquad
 \mathcal N_a'
 :=\mathcal N(\mathcal S_a').
\]
Whenever $P_{\mathcal B'}(x)>0$, put
\begin{equation}\label{eq:delta-definition}
 \delta_a
 :=\frac{P_{\mathcal B_a'}(x)}
          {P_{\mathcal B'}(x)},
 \qquad a\in[s-1].
\end{equation}

\begin{lemma}\label{lem:frequent-omission}
Assume that the full $t$-star dominates the Frankl families and
$\rho(\F)>\rho_r$, where $r$ is given by
\eqref{eq:diagonal-parameters}.  If
$P_{\mathcal B'}(x)>0$,
then there is a coordinate $a_0\in[s-1]$ such that the candidate generating
family $\mathcal H_{a_0}'$ satisfies
\begin{equation}\label{eq:delta-bound}
 \delta_{a_0}=\max_{a\in[s-1]}\delta_a
 \ge\max\left\{\frac r{s-1},\frac1r\right\}.
\end{equation}
\end{lemma}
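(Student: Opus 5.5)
The first lower bound in \eqref{eq:delta-bound} comes from averaging. The second, $\delta_{a_0}\ge 1/r$, is a covering statement, and I would get it from the left-compression of $\F$ together with the separation hypothesis $\rho(\F)>\rho_r$.

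\emph{Averaging.} Every $E\in\G_{t+r}^{\partial}$ has $|C_E|=r$, and the classes $\C_s(E)$ are pairwise disjoint. Linearity of $P$ therefore gives
\[
 \sum_{a=1}^{s-1}P_{\mathcal B_a'}(x)=rP_{\mathcal B'}(x),
 \qquad\text{that is,}\qquad \sum_{a\in[s-1]}\delta_a=r .
\]
Choosing $a_0$ to maximize $\delta_a$ (a maximum over a finite set) gives $\delta_{a_0}\ge r/(s-1)$. This is the same computation as in the proof of Lemma~\ref{lem:adjacent-pull}.

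\emph{Reduction to a cover.} It suffices to find an $r$-set $C^*\subseteq[s-1]$ such that every middle-rank class of positive weight has $C_E\cap C^*\ne\emptyset$. Then
\[
 \sum_{a\in C^*}\delta_a\ \ge\ \sum_{E}\frac{P_{\C_s(E)}(x)}{P_{\mathcal B'}(x)}=1,
\]
so some $a\in C^*$ has $\delta_a\ge 1/r$.

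\emph{Structure of the omitted sets.} Since $\F$ is left-compressed and maximal, the generator property is preserved by replacing an element of $E\setminus\{s\}$ by a smaller element outside $E$. In terms of omitted sets, the family $\mathcal C:=\{C_E:E\in\G_{t+r}^{\partial}\}\subseteq\binom{[s-1]}r$ is closed under moving an element upward.

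If $[r]\in\mathcal C$, then $\mathcal C=\binom{[s-1]}r$. In that case every $(t+r)$-subset of $[s]$ containing $s$ is a generator. Compression then makes every $(t+r)$-subset of $[s]$ a generator, so $\F\supseteq\A_r$. Since $\A_r$ is maximal $t$-intersecting, this gives $\F=\A_r$, contradicting $\rho(\F)>\rho_r$. Hence $[r]\notin\mathcal C$, and by up-closure no member of $\mathcal C$ is contained in any set that is ``below'' $[r]$ in the shifting order.

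\emph{Choosing $C^*$.} I would try $C^*$ equal to the $r$ smallest coordinates that can occur in omitted sets; the first candidate is $C^*=[r]$ itself. A set $C\in\mathcal C$ disjoint from $[r]$ lies in $\{r+1,\ldots,s-1\}$. Combining Lemma~\ref{lem:tight-pairs} (tight middle-rank pairs have disjoint omitted sets and union $[s]$) with the $t$-intersection of $\G$ (Lemma~\ref{lem:standard-generator}(3)), I would try to show that such $C$ force $[r]$, or a downward shift of it, into $\mathcal C$. Where $C\cap[r]=\emptyset$ cannot be excluded outright, I would bound its weight. Moving an element of $C$ downward multiplies $P_{\C_s(E)}(x)$ by a ratio of Perron coordinates of the form $x_{\mathrm{old}}/x_{\mathrm{new}}\le1$, since $x_1\ge\cdots\ge x_n$. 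So the mass of sets missing $C^*$ should be dominated by the mass of sets meeting it.

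\emph{Main obstacle.} The step I expect to be hardest is this last one: the combinatorial covering by $r$ points. Tight partners do produce disjoint omitted sets, so a naive intersecting-family argument fails. The pieces that must be combined are the weighted comparison along shifts and the exclusion of the complete middle layer through $\rho(\F)>\rho_r$. If no exact $r$-point cover exists, the fallback is a fractional cover of total weight $r$ built from the up-closed structure of $\mathcal C$, which yields the same averaging conclusion $\max_a\delta_a\ge1/r$.
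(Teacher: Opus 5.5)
\textbf{There is a genuine gap: the bound $\delta_{a_0}\ge 1/r$ is never established.} Your averaging step is correct and is exactly how the paper gets $\delta_{a_0}\ge r/(s-1)$. The $1/r$ bound is the real content of the lemma, and for it the proposal ends with a plan (``I would try to show'') and a fractional cover that is never constructed. The concrete plan also points the wrong way, for three reasons.

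\emph{The omitted sets are not up-closed.} Compression makes the set of \emph{all} generators closed under downward shifts. A shifted middle-rank set, however, need not be a \emph{minimal} generator, so $\{C_E:E\in\G_{t+r}^{\partial}\}$ need not be closed under moving an element upward.

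\emph{$[r]$ need not be a cover.} Compression pushes generators toward small coordinates, so omitted sets sit high. Since $x_1\ge\cdots\ge x_n$, the classes whose omitted sets avoid $[r]$ are the heaviest, so your weight comparison also runs against you. Concretely, take $t=r=2$, $s=6$, $k\ge4$ and $n$ large. The maximal left-compressed family with minimal generators $[3]$ and all $\{a,b,c,d\}$ with $\{a,b\}\subseteq[3]$, $\{c,d\}\subseteq\{4,5,6\}$ has the middle rank as its whole boundary. Its omitted sets are the $\{d,e\}$ with $d\in[3]$, $e\in\{4,5\}$. So $\{3,4\}$ occurs but its upward shift $\{4,5\}$ does not, because $\{1,2,3,6\}\supseteq[3]$ is not minimal. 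Moreover $E=\{1,2,c,6\}$ has $C_E=\{3,e\}$, which is disjoint from $[2]$. The cover that works here is $\{4,5\}=[5]\setminus[3]$, the complement of a short generator.

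\emph{The separation hypothesis is used in the wrong direction.} You spend $\rho(\F)>\rho_r$ on excluding $\F\supseteq\A_r$. What is actually needed is its consequence $\F\not\subseteq\A_r$.

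\textbf{The missing idea} is to take $Q=[s-1]\setminus T$ for a generator $T$ shorter than the middle rank. One then averages $\delta$ over $Q$: $\sum_{p\in Q}\delta_p$ is the $P$-weighted average of $|C_E\cap Q|$. There are two cases.

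\emph{Some off-middle boundary rank occurs.} Let $A\in\G^{\partial}_{t+a}$, with $1\le a<r$, come from the smaller member of a complementary pair, and put $T=A\setminus\{s\}$. Tight pairs have complementary ranks (Lemma~\ref{lem:tight-pairs}), so $|A\cap E|\ge t+1$ for every middle-rank $E$. Hence $|C_E\cap Q|\ge r-a+1$, while $|Q|=2r-a$. This gives $\max_{p\in Q}\delta_p\ge(r-a+1)/(2r-a)\ge1/r$.

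\emph{No off-middle rank occurs.} Then some $T\in\G^{\circ}$ has size $t+r-m$ with $m\ge1$. Otherwise every edge meets $[s]$ in at least $t+r$ points, so $\F\subseteq\A_r$, contradicting $\rho(\F)>\rho_r$. Now $t$-intersection gives $|C_E\cap Q|\ge m$, with $|Q|=r+m-1$. Hence $\max_{p\in Q}\delta_p\ge m/(r+m-1)\ge1/r$.
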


\begin{proof}
Since every $C_E$ has size $r$ and the families $\C_s(E)$ are pairwise
disjoint,
\[
 \sum_{p=1}^{s-1}\delta_p
 =\frac{1}{P_{\mathcal B'}(x)}
   \sum_{E\in\G_{t+r}^{\partial}}
   |C_E|P_{\C_s(E)}(x)
 =r.
\]
This gives the first term in~\eqref{eq:delta-bound}.

Suppose first that an off-middle boundary rank occurs.  Take a generator
$A\in\G_{t+a}^{\partial}$ from the smaller member of a complementary pair,
where $1\le a<r$, and let
\[
 T=A\setminus\{s\},
 \qquad Q=[s-1]\setminus T.
\]
For a middle-rank generator $E$, equality $|A\cap E|=t$ would force
$|A|+|E|=s+t$ by Lemma~\ref{lem:tight-pairs}, and hence $a=r$, a
contradiction.  Thus $|T\cap(E\setminus\{s\})|\ge t$, so
\[
 |C_E\cap Q|\ge r-a+1,
 \qquad |Q|=2r-a.
\]
Consequently,
\[
 \sum_{p\in Q}\delta_p
 =\frac{1}{P_{\mathcal B'}(x)}
   \sum_{E\in\G_{t+r}^{\partial}}
   |C_E\cap Q|P_{\C_s(E)}(x)
 \ge r-a+1,
\]
and therefore
$\max_p\delta_p\ge(r-a+1)/(2r-a)\ge1/r$.

Now suppose there is no off-middle boundary rank.  Since
$\rho(\F)>\rho_r$, not all nonboundary generators can have size at
least $t+r$; otherwise every edge of $\F$ would contain at least $t+r$ points
of $[s]$, and $\F$ would be a subfamily of $\A_r$.  Choose
$T\in\G^{\circ}$ with $|T|=t+r-m$, $m\ge1$, and let
$Q=[s-1]\setminus T$.  Since every middle-rank generator $E$ $t$-intersects
$T$,
\[
 |C_E\cap Q|\ge m,
 \qquad |Q|=r+m-1.
\]
Consequently,
\[
 \sum_{p\in Q}\delta_p
 =\frac{1}{P_{\mathcal B'}(x)}
   \sum_{E\in\G_{t+r}^{\partial}}
   |C_E\cap Q|P_{\C_s(E)}(x)
 \ge m,
\]
and hence $\max_p\delta_p\ge m/(r+m-1)\ge1/r$.
\end{proof}

For $t\ge2$, the universal gain estimate converts
\eqref{eq:delta-bound} into the required comparison of polynomial values.

\begin{corollary}\label{cor:profitable-middle}
Let $t\ge2$.  Under the assumptions of
Lemma~\ref{lem:frequent-omission}, there exists $a_0\in[s-1]$ such that
\begin{equation}\label{eq:middle-ledger}
 P_{\mathcal N_{a_0}'}(y)
 =\Gamma_{k-t-r}(\eta)
  P_{\mathcal B_{a_0}'}(x)
 \ge P_{\mathcal B'}(x).
\end{equation}
The final inequality in~\eqref{eq:middle-ledger} is strict.  Consequently, if the
middle rank is the entire boundary, then
\[
 \rho(\mathcal Q_{a_0}')>\rho(\F).
\]
\end{corollary}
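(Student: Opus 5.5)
Take $a_0$ as in Lemma~\ref{lem:frequent-omission}, so that $\delta_{a_0}\ge\max\{r/(s-1),1/r\}$. Since $s-1=t+2r-1$, this maximum is at least $1/L_{t,r}$, where $L_{t,r}=\min\{r,(t+2r-1)/r\}$ is the quantity of Lemma~\ref{lem:middle-envelope}. The equality in~\eqref{eq:middle-ledger} is~\eqref{eq:old-new-trace-polynomials} applied to $\mathcal S_{a_0}'$. Every member of $\mathcal S_{a_0}'$ has size $t+r=k-(k-t-r)$, so $q=k-t-r$. For the inequality, Lemma~\ref{lem:universal-gain} gives
\[
 \Gamma_{k-t-r}(\eta)P_{\mathcal B_{a_0}'}(x)\ge g_{k-t-r}\,\delta_{a_0}P_{\mathcal B'}(x).
\]
The whole point is therefore $g_{k-t-r}\delta_{a_0}>1$. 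This follows once we show $g_{k-t-r}>L_{t,r}$, with tail length $N=n-s=n-t-2r$.

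That strict bound is Lemma~\ref{lem:middle-envelope}, and it needs two hypotheses. The first is~\eqref{eq:ground-set-gap}. The second is~\eqref{eq:basic-numerical-condition}. Both follow from the standing dominance assumption via Lemma~\ref{lem:numerical-consequences}. One check is still needed: Lemma~\ref{lem:middle-envelope} computes $g_{k-t-r}$ with $n-t-2r$ tail coordinates, and here $s=t+2r$ by~\eqref{eq:diagonal-parameters}. So the parameters coincide exactly, and we get $g_{k-t-r}\delta_{a_0}>g_{k-t-r}/L_{t,r}\cdot L_{t,r}\delta_{a_0}\ge$ (strictly) $P_{\mathcal B'}(x)/P_{\mathcal B'}(x)$. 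Here $P_{\mathcal B'}(x)>0$ is assumed, and this gives the strict inequality. The degenerate cases $\gamma=0$ or $\beta=0$ are handled by the continuity remark after Lemma~\ref{lem:universal-gain}. Note that $\beta=0$ with $q>0$ would force $P_{\mathcal B'}(x)=0$, which is excluded.

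For the consequence, suppose the middle rank is the whole boundary, so $\G^\partial=\G_{t+r}^\partial$. Lemma~\ref{lem:trace-decomposition} then gives
\[
 P_\F(x)=P_{\F^\circ}(x)+P_{\mathcal B'}(x).
\]
Now $\mathcal Q_{a_0}'\supseteq\F^\circ\cup\mathcal N_{a_0}'$. Here $\F^\circ$ and $\mathcal N_{a_0}'$ are disjoint: a set in $\C_{s-1}(E\setminus\{s\})$ containing some $G\in\G^\circ$ would put $G\subseteq E\setminus\{s\}$, contradicting the antichain property. Combining nonnegativity of $y$, Lemma~\ref{lem:tail-symmetrization} and~\eqref{eq:middle-ledger}, we get
\[
 P_{\mathcal Q_{a_0}'}(y)\ge P_{\F^\circ}(y)+P_{\mathcal N_{a_0}'}(y)>P_{\F^\circ}(x)+P_{\mathcal B'}(x)=\rho(\F).
\]
The vector $y$ has $\sum y_i^k=\sum x_i^k=1$ by~\eqref{eq:tail-equalization}. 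Hence $\rho(\mathcal Q_{a_0}')\ge P_{\mathcal Q_{a_0}'}(y)>\rho(\F)$.

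The main obstacle is making sure $P_{\mathcal B'}(x)>0$ in the final claim. If the middle rank is the entire boundary, then $\G^\partial\neq\emptyset$ because $s=\sigma(\G)$. Each boundary class is nonempty by~\eqref{eq:q-range}, and all of its vertices lie in edges of $\F$, so they have positive coordinates by Proposition~\ref{prop:perron-shift}(4). Positivity therefore holds.
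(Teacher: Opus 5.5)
Your proposal is correct and follows the paper's proof essentially step for step. You choose $a_0$ from Lemma~\ref{lem:frequent-omission} and use $1/\delta_{a_0}\le L_{t,r}$ together with $\Gamma_{k-t-r}(\eta)\ge g_{k-t-r}>L_{t,r}$ from Lemma~\ref{lem:middle-envelope}, then combine Lemma~\ref{lem:tail-symmetrization} with the trace decomposition and the disjointness $\mathcal N_{a_0}'\cap\F^{\circ}=\emptyset$, exactly as the paper does. (One displayed chain is miswritten and should read $g_{k-t-r}\delta_{a_0}>L_{t,r}\delta_{a_0}\ge1$. Your closing positivity check is redundant, since $P_{\mathcal B'}(x)>0$ is already a hypothesis. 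Neither point affects the argument.)
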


\begin{proof}
Choose $a_0$ as in Lemma~\ref{lem:frequent-omission}.  Then
\[
 \frac1{\delta_{a_0}}
 \le\min\left\{\frac{s-1}{r},r\right\}=L_{t,r}.
\]
Dominance over the Frankl families gives~\eqref{eq:ground-set-gap} and
\eqref{eq:basic-numerical-condition}; hence
Lemma~\ref{lem:middle-envelope} yields
$\Gamma_{k-t-r}(\eta)\ge g_{k-t-r}>L_{t,r}$.  By
\eqref{eq:delta-definition},
\[
 P_{\mathcal B_{a_0}'}(x)
 =\delta_{a_0}P_{\mathcal B'}(x).
\]
Together with~\eqref{eq:old-new-trace-polynomials}, this proves the strict
form of~\eqref{eq:middle-ledger}.

If this is the entire boundary, then
$\mathcal N_{a_0}'\cap\F^{\circ}=\emptyset$ by the antichain
property.  Lemma~\ref{lem:tail-symmetrization},
\eqref{eq:trace-decomposition}, and~\eqref{eq:middle-ledger} give
\[
 \begin{aligned}
 P_{\mathcal Q_{a_0}'}(y)
 &\ge P_{\F^{\circ}}(y)+P_{\mathcal N_{a_0}'}(y)\\
 &>P_{\F^{\circ}}(x)+P_{\mathcal B'}(x)
  =P_\F(x)=\rho(\F),
 \end{aligned}
\]
which proves the last assertion.
\end{proof}

The off-diagonal selection chooses an entire rank, whereas the diagonal
selection chooses the generators sharing a common omission.  These are the
two components of a single pull; we combine them next.

\subsection{The simultaneous boundary pull}\label{subsec:simultaneous-pull}

Assume that the full $t$-star dominates the Frankl families and that
\eqref{eq:candidate-separation} holds whenever a middle rank is present.  For
each unequal complementary pair of boundary ranks, use
Lemma~\ref{lem:profitable-off-middle} to select a rank satisfying
\eqref{eq:off-middle-polynomial}.  If a middle rank is present and
$P_{\mathcal B'}(x)>0$, use
Corollary~\ref{cor:profitable-middle} to choose a common omitted coordinate and
select precisely the middle-rank generators omitting it; the $t=1$
counterpart is given in Section~\ref{sec:t-one}.  If
$P_{\mathcal B'}(x)=0$, discard the middle rank.  Let
$\mathcal S\subseteq\G^{\partial}$ be the collection of all selected boundary
generators, and write
\[
 \mathcal H:=\mathcal H(\mathcal S),
 \qquad
 \mathcal Q:=\mathcal Q(\mathcal S)
\]
as in~\eqref{eq:macro-generators}.  We call this the
\emph{simultaneous boundary pull}: all boundary-rank orbits are treated at
once, and its output is visibly generated inside $[s-1]$.

The first property required of this operation is that the selected shadows
remain a valid $t$-intersecting generating family.

\begin{lemma}\label{lem:macro-intersection}
If $s>t$, then the generating family $\mathcal H(\mathcal S)$ is
$t$-intersecting.  Consequently, $\mathcal Q(\mathcal S)$ is a
$t$-intersecting $k$-uniform family generated inside $[s-1]$.
\end{lemma}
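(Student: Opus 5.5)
The plan is to check the $t$-intersection property pair by pair over the three kinds of members of $\mathcal H(\mathcal S)$: old generators in $\G^{\circ}$, and shadows $A\setminus\{s\}$ of selected boundary generators $A\in\mathcal S$. Throughout I use that $\G$ itself is $t$-intersecting, by Lemma~\ref{lem:standard-generator}(3), since $n>2k-t$.

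\emph{Pairs not involving two shadows.} Two members of $\G^{\circ}$ $t$-intersect because $\G$ does. For $G\in\G^{\circ}$ and $A\in\mathcal S$, we have $s\notin G$, so
\[
 |G\cap(A\setminus\{s\})|=|G\cap A|\ge t .
\]

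\emph{Two shadows.} Let $A,B\in\mathcal S$. Both contain $s$, so $|(A\setminus\{s\})\cap(B\setminus\{s\})|=|A\cap B|-1$. It therefore suffices to show that no two selected generators form a tight pair, that is, $|A\cap B|\ne t$.

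First take $A\ne B$ with $|A\cap B|=t$. By Lemma~\ref{lem:tight-pairs}, $|A|+|B|=s+t$ and $A\cup B=[s]$. I then split according to the ranks.
\begin{enumerate}[label=\textup{(\roman*)},leftmargin=2.2em]
 \item If $|A|\ne|B|$, the ranks $|A|$ and $|B|$ form an off-diagonal orbit $\{p,q\}$ of the involution $i\mapsto s+t-i$. The simultaneous pull selects only one whole rank from each such orbit (Lemma~\ref{lem:profitable-off-middle}), so $A$ and $B$ cannot both be selected.
 \item If $|A|=|B|$, then both have the middle rank $t+r$ with $r=(s-t)/2$. The selected middle-rank generators all omit a common coordinate $a_0\in[s-1]$. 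Hence $A\cup B\subseteq[s]\setminus\{a_0\}\ne[s]$, contradicting~\eqref{eq:tight-union}.
\end{enumerate}
If the middle rank was discarded, case (ii) does not arise at all.

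The degenerate case $A=B$ needs $|A\setminus\{s\}|\ge t$, i.e.\ $|A|\ge t+1$. By Lemma~\ref{lem:tight-pairs}, $A$ has a partner $B'$ with $A\cup B'=[s]$ and $|A\cap B'|=t$. If $|A|=t$, then $A\subseteq B'$, and the antichain property forces $A=B'=[s]$, so $s=t$, contrary to the hypothesis $s>t$.

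\emph{Passing to $\mathcal Q(\mathcal S)$.} All members of $\mathcal H(\mathcal S)$ lie in $[s-1]$ and have size at most $k$. If $F\supseteq H$ and $F'\supseteq H'$ with $H,H'\in\mathcal H(\mathcal S)$, then
\[
 |F\cap F'|\ge|H\cap H'|\ge t .
\]
So $\mathcal Q(\mathcal S)$ is a $t$-intersecting $k$-uniform family generated inside $[s-1]$.

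The only real content, and the step I regard as the crux, is excluding tight selected pairs. It rests on combining the rank complementarity and the union property $A\cup B=[s]$ from Lemma~\ref{lem:tight-pairs} with the two selection rules: one whole rank per off-diagonal orbit, and a common omitted coordinate on the diagonal.
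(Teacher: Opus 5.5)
Your proof is correct and follows essentially the same route as the paper: pairs of an old generator and a new generator lose nothing because $s\notin G$, tight pairs among selected shadows are ruled out by the rank complementarity of Lemma~\ref{lem:tight-pairs} together with the one-rank-per-orbit selection, and the degenerate size-$t$ case is excluded using $s>t$. The differences are minor: for two middle-rank generators you contradict $A\cup B=[s]$ using the common omitted coordinate, where the paper counts $2(t+r-1)-(t+2r-2)=t$ directly, and you exclude $|A|=t$ via the tight partner and the antichain property rather than the paper's left-compression argument.
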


\begin{proof}
No boundary generator has size exactly $t$ when $s>t$.  Indeed, a
$t$-element generator would be contained in every generator by
Lemma~\ref{lem:standard-generator}(3); the antichain would then consist only
of this set.  Since $\F$ is left-compressed, that set would be $[t]$, forcing
$s=t$.

An old generator $G\in\G^{\circ}$ and a new generator
$A^-=A\setminus\{s\}$ satisfy
\[
 |G\cap A^-|=|G\cap A|\ge t,
\]
because $s\notin G$.  Consider two new generators $A^-,B^-$.  If $A,B$ lie
in selected off-middle ranks and $|A\cap B|=t$, then
Lemma~\ref{lem:tight-pairs} forces their ranks to be complementary.  The
construction selects at most one rank from each unequal complementary pair,
so this is impossible.  Hence $|A\cap B|\ge t+1$, and deleting their common
point $s$ leaves $|A^-\cap B^-|\ge t$.  The same argument applies to one
off-middle generator and one middle-rank generator, because the complement
of the middle rank is the middle rank itself.

Finally, two selected middle-rank generators both omit $p_0$.  Their shadows
have size $t+r-1$ inside the $(t+2r-2)$-element set
$[s-1]\setminus\{p_0\}$, and therefore intersect in at least
\[
 2(t+r-1)-(t+2r-2)=t
\]
points.
\end{proof}

The second required property is that this reduction of the support does not
decrease the spectral polynomial at the constructed vector.

\begin{lemma}\label{lem:spectral-ledger}
With $x$ and the symmetrized vector $y$ from
\eqref{eq:tail-equalization},
\begin{equation}\label{eq:simultaneous-pull-objective}
 P_{\mathcal Q}(y)\ge P_{\F}(x)=\rho(\F),
 \qquad
 \rho(\mathcal Q)\ge\rho(\F).
\end{equation}
If $s>t$, both inequalities in
\eqref{eq:simultaneous-pull-objective} are strict.  Hence, for $s>t$,
$\mathcal Q(\mathcal S)$ is a $t$-intersecting family generated inside
$[s-1]$ with spectral radius strictly larger than that of $\F$.
\end{lemma}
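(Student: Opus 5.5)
The plan is to write a single polynomial ledger comparing $P_{\mathcal Q}(y)$ with $P_\F(x)$, split along the trace decomposition. By Lemma~\ref{lem:trace-decomposition},
\[
 P_\F(x)=P_{\F^{\circ}}(x)+\sum_{\text{orbits}}\Bigl(\sum_{\ell\in\text{orbit}}P_{\mathcal B_{k-\ell}}(x)\Bigr),
\]
where the orbits are the two-element orbits $\{p,q\}$ of the involution $i\mapsto s+t-i$ on boundary ranks, together with possibly the fixed point $t+r$.

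On the other side, $\mathcal Q=\bigcup_{H\in\mathcal H}\D(H)$ contains $\F^{\circ}$ and contains each $\mathcal N(\mathcal S_\ell)$ for the selected ranks, together with $\mathcal N'_{a_0}$ for the diagonal selection. So the first step is to show that these pieces are pairwise disjoint subfamilies of $\mathcal Q$. Each new class $\C_{s-1}(A\setminus\{s\})$ consists of sets whose trace on $[s-1]$ is exactly $A^-=A\setminus\{s\}$, so distinct selected $A$ give disjoint classes. Such a class is also disjoint from $\F^{\circ}$: if some $F$ with $F\cap[s-1]=A^-$ contained $G\in\G^{\circ}$, then $G\subseteq A^-\subsetneq A$, which contradicts the antichain property. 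Since $P$ has nonnegative coefficients and $y\ge0$, this gives
\[
 P_{\mathcal Q}(y)\ge P_{\F^{\circ}}(y)+\sum_{\text{off-diag}}P_{\mathcal N(\mathcal S_\ell)}(y)+P_{\mathcal N'_{a_0}}(y),
\]
where the last term is present only when the middle rank occurs and $P_{\mathcal B'}(x)>0$.

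Next I apply the three local estimates term by term. Lemma~\ref{lem:tail-symmetrization} gives $P_{\F^{\circ}}(y)\ge P_{\F^{\circ}}(x)$. For each off-diagonal orbit, Lemma~\ref{lem:profitable-off-middle} gives that the selected $P_{\mathcal N(\mathcal S_\ell)}(y)$ is at least the sum of both old layers of that orbit. For the diagonal orbit, Corollary~\ref{cor:profitable-middle} applies when $P_{\mathcal B'}(x)>0$, and the discarded layer contributes $0$ otherwise. The cases $\gamma=0$ or $\beta=0$ are handled by the continuity remark after Lemma~\ref{lem:universal-gain}. Summing the estimates gives $P_{\mathcal Q}(y)\ge P_\F(x)=\rho(\F)$. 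Since $\sum_i y_i^k=\sum_i x_i^k=1$ by~\eqref{eq:tail-equalization}, we get $\rho(\mathcal Q)\ge P_{\mathcal Q}(y)$, and the second inequality follows.

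The main obstacle is strictness when $s>t$, which needs at least one strictly positive boundary contribution. The boundary $\G^{\partial}$ is nonempty because $s=\sigma(\G)$. Take $A\in\G^{\partial}$ with $|A|=k-q$. By~\eqref{eq:q-range}, $\C_s(A)\neq\emptyset$, and every edge of $\C_s(A)$ lies in $\F$, so all its vertices are nonisolated. Proposition~\ref{prop:perron-shift}(4) then gives positive coordinates there, hence $P_{\C_s(A)}(x)>0$. So some orbit has positive total contribution. If that orbit is off-diagonal, the strict clause of Lemma~\ref{lem:profitable-off-middle} applies. If it is the diagonal one, then $P_{\mathcal B'}(x)>0$ and the strict form of~\eqref{eq:middle-ledger} in Corollary~\ref{cor:profitable-middle} applies. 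Either way one summand is strict, so $P_{\mathcal Q}(y)>\rho(\F)$ and therefore $\rho(\mathcal Q)>\rho(\F)$. Finally, Lemma~\ref{lem:macro-intersection} shows that $\mathcal Q$ is $t$-intersecting and generated inside $[s-1]$, which is the concluding assertion.
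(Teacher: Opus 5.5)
Your proposal is correct and follows the paper's proof essentially step for step: the same trace-decomposition ledger, the disjointness of the pulled classes from each other and from $\F^{\circ}$, the orbitwise use of Lemma~\ref{lem:profitable-off-middle} and the diagonal selection, tail symmetrization, and strictness from a positive nonempty boundary layer via Proposition~\ref{prop:perron-shift}(4). One thing to add: Corollary~\ref{cor:profitable-middle} is stated only for $t\ge2$, and the paper later applies this lemma with $t=1$, so for $t=1$ you must handle the diagonal orbit by Corollary~\ref{cor:t-one-profitable-middle}, i.e.\ inequality~\eqref{eq:t-one-middle-ledger}, exactly as the paper does.
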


\begin{proof}
For every $A\in\G^{\partial}$, all vertices in $A$ are nonisolated and hence
have positive coordinates by Proposition~\ref{prop:perron-shift}(4).  If
$|A|=k-q$ with $q>0$, then $q<n-s$ by~\eqref{eq:q-range}, and every tail
vertex belongs to a member of $\C_s(A)$; thus $\beta>0$.  If $q=0$, the
factor $\beta^q$ in~\eqref{eq:old-boundary-contribution} is absent.  Hence
\begin{equation}\label{eq:positive-boundary-contribution}
 P_{\C_s(A)}(x)>0
 \qquad(A\in\G^{\partial}).
\end{equation}

By~\eqref{eq:trace-decomposition} and the definition of $\mathcal B_q$,
\begin{equation}\label{eq:old-ledger}
 P_{\F}(x)=P_{\F^{\circ}}(x)+\sum_qP_{\mathcal B_q}(x).
\end{equation}
For distinct selected generators $A$, the families
$\C_{s-1}(A\setminus\{s\})$ are pairwise disjoint.  Moreover,
\begin{equation}\label{eq:new-traces-disjoint}
 \mathcal N(\mathcal S)\cap\F^{\circ}=\emptyset,
\end{equation}
because a member of the intersection would imply that a nonboundary minimal
generator is properly contained in a boundary generator.

For every unequal complementary pair, inequality
\eqref{eq:off-middle-polynomial} compares the selected part of
$P_{\mathcal N(\mathcal S)}(y)$ with the two corresponding terms in
\eqref{eq:old-ledger}.  For the middle rank, the corresponding comparison is
\eqref{eq:middle-ledger} when $t\ge2$ and
\eqref{eq:t-one-middle-ledger} when $t=1$.  Summing these inequalities over
the boundary-rank orbits gives
\begin{equation}\label{eq:new-versus-old-boundary}
 P_{\mathcal N(\mathcal S)}(y)
 \ge\sum_qP_{\mathcal B_q}(x).
\end{equation}
Using~\eqref{eq:new-traces-disjoint}, Lemma~\ref{lem:tail-symmetrization},
\eqref{eq:new-versus-old-boundary}, and~\eqref{eq:old-ledger}, we obtain
\[
 \begin{aligned}
 P_{\mathcal Q}(y)
 &\ge P_{\F^{\circ}}(y)+P_{\mathcal N(\mathcal S)}(y)\\
 &\ge P_{\F^{\circ}}(x)+\sum_qP_{\mathcal B_q}(x)
  =P_{\F}(x)=\rho(\F).
 \end{aligned}
\]
Since $\sum_i y_i^k=1$, we also have
$\rho(\mathcal Q)\ge P_{\mathcal Q}(y)$, as required.

If $s>t$, the boundary $\G^{\partial}$ is nonempty.  Every one of
its nonempty layers has positive polynomial value by
\eqref{eq:positive-boundary-contribution}.  For each unequal
complementary pair, inequality~\eqref{eq:off-middle-polynomial} is strict by
Lemma~\ref{lem:profitable-off-middle}; the middle-rank inequality is strict
by Corollary~\ref{cor:profitable-middle} when $t\ge2$ and by
Corollary~\ref{cor:t-one-profitable-middle} when $t=1$.  At least one such
rank orbit is present, so $P_{\mathcal Q}(y)>P_{\F}(x)$ and hence
$\rho(\mathcal Q)>\rho(\F)$.
\end{proof}

\subsection{The Frankl-family comparison step}

The support-reduction argument is most transparent when isolated in the
following intermediate proposition.  Its hypothesis still asks the star to
dominate all the Frankl families; Proposition
\ref{prop:propagation} will immediately reduce that hypothesis to the single
comparison $\rho_0\ge\rho_1$.  The strict form records the equality case.

\begin{proposition}\label{prop:candidate-comparison}
Let $2\le t\le k$ and $n>2k-t$.  If the full $t$-star is
dominant among the Frankl families, then it is spectrally extremal among all $t$-intersecting
subfamilies of $\binom{[n]}k$.  If, moreover,
\begin{equation}\label{eq:strict-candidate-dominance}
 \rho_0>\rho_r\qquad(1\le r\le k-t),
\end{equation}
then every spectral extremal structure is a full $t$-star, up to permutation.
Conversely, a spectrally extremal full $t$-star must dominate the Frankl
families.
\end{proposition}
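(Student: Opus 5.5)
The plan is a minimal-counterexample argument built on Proposition~\ref{prop:perron-shift} and the simultaneous boundary pull. The converse is immediate, since each $\A_r$ is $t$-intersecting, so a spectrally extremal full star satisfies $\rho_0\ge\rho_r$ for every $r$. The case $t=k$ is trivial: a $k$-intersecting family has at most one member. So assume $2\le t<k$ and that the star dominates the Frankl families. Lemma~\ref{lem:numerical-consequences} then gives $n\ge 2k-t+2$, which is needed for Proposition~\ref{prop:perron-shift}.

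\emph{Extremality.} Suppose the star is not extremal. Then some spectral extremal structure is not a full star, and it satisfies $\rho(\F)>\rho_0$. Proposition~\ref{prop:perron-shift} supplies a pair $(\F,x)$ with $\F$ inclusion-maximal and left-compressed, $s=\sigma(\G)$ minimal among such non-star extremal structures, $x$ monotone, and $x$ positive on nonisolated vertices. After Perron tail equalization we form $y$ as in~\eqref{eq:tail-equalization}. Hypothesis~\eqref{eq:candidate-separation} holds, because $\rho(\F)>\rho_0\ge\rho_r$. If $s=t$, the antichain is $\{[t]\}$ and $\F=\A_0$, a contradiction. So $s>t$, and Lemmas~\ref{lem:macro-intersection} and~\ref{lem:spectral-ledger} produce a $t$-intersecting $\mathcal Q$ with $\rho(\mathcal Q)>\rho(\F)$. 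This contradicts maximality of $\rho(\F)$, so $\rho(\F)\le\rho_0$ for every $t$-intersecting $\F$. Here only the global maximum of $\rho$ matters, so the support-minimality clause is not even needed.

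\emph{Uniqueness under~\eqref{eq:strict-candidate-dominance}.} By the first part, $\rho^*=\rho_0$. Suppose some extremal structure is not a full star. Apply Proposition~\ref{prop:perron-shift} again to get $(\F,x)$ with $\rho(\F)=\rho_0$. Now~\eqref{eq:candidate-separation} holds because $\rho(\F)=\rho_0>\rho_r$; this strictness is exactly what Lemma~\ref{lem:frequent-omission} and Corollary~\ref{cor:profitable-middle} require in the diagonal stage. As before, $s=t$ is impossible since $\F$ is not a star. Hence $s>t$, and the strict form of Lemma~\ref{lem:spectral-ledger} gives a $t$-intersecting $\mathcal Q$ with $\rho(\mathcal Q)>\rho(\F)=\rho_0$. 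This contradicts the extremality just proved, so every extremal structure is a full $t$-star up to permutation.

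\emph{Main obstacle.} The main obstacle is confirming that the hypotheses of the pull lemmas are met in both applications. First, the middle-rank selection needs $P_{\mathcal B'}(x)>0$ or else discards that rank; positivity follows from~\eqref{eq:positive-boundary-contribution}. Second, we must check that the strictness in Lemma~\ref{lem:spectral-ledger} genuinely uses $\rho(\F)>\rho_r$ only through Lemma~\ref{lem:frequent-omission}. In that lemma the argument shows that, with no off-middle rank, $\F\subseteq\A_r$ up to the compression, whence $\rho(\F)\le\rho_r$, contradicting the separation. I would verify this containment carefully, since it is the one place where the strict dominance~\eqref{eq:strict-candidate-dominance} replaces the weak one.
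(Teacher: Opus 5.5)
Your proposal is correct and follows the paper's proof essentially step for step. The shared steps are: the choice of $(\F,x)$ via Proposition~\ref{prop:perron-shift} after Lemma~\ref{lem:numerical-consequences} supplies $n\ge 2k-t+2$; ruling out $s=t$; the strict simultaneous boundary pull from Lemmas~\ref{lem:macro-intersection} and~\ref{lem:spectral-ledger}, with \eqref{eq:candidate-separation} coming from $\rho(\F)>\rho_0\ge\rho_r$ in the value argument and from $\rho(\F)=\rho_0>\rho_r$ in the rigidity argument; and the trivial converse. Your remark that support-minimality is never actually used in the contradiction is accurate. The diagonal pull also needs \eqref{eq:basic-numerical-condition}, which the same Lemma~\ref{lem:numerical-consequences} supplies, and in Lemma~\ref{lem:frequent-omission} the containment is simply $\F\subseteq\A_r$ on the core $[s]=[t+2r]$, with no compression needed.
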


\begin{proof}
The case $t=k$ is immediate, so assume $t<k$.  Let
\[
 \rho^*:=\max\left\{\rho(\mathcal B):
 \mathcal B\subseteq\binom{[n]}k\text{ is $t$-intersecting}\right\}.
\]
Suppose for contradiction that $\rho^*>\rho_0$.  Choose an
inclusion-maximal left-compressed spectral extremal structure $\F$ whose minimal
generating antichain has the smallest possible support length $s$, as in
Proposition~\ref{prop:perron-shift}.  Lemma~\ref{lem:tight-pairs} gives
$s\le2k-t$.

If $s=t$, then every generator has size at least $t$ and lies in the
$t$-set $[s]$.  Hence $\G=\{[t]\}$ and $\F=\A_0$, contradicting
$\rho(\F)=\rho^*>\rho_0$.  Thus $s>t$.

Dominance over the Frankl families gives~\eqref{eq:ground-set-gap} and
\eqref{eq:basic-numerical-condition} by
Lemma~\ref{lem:numerical-consequences}.  Hence the off-diagonal selection of
Lemma~\ref{lem:profitable-off-middle} and the diagonal selection of
Corollary~\ref{cor:profitable-middle} are both available; in the latter case
\eqref{eq:candidate-separation} follows from
$\rho(\F)=\rho^*>\rho_0\ge\rho_r$.  Construct
$\mathcal Q$ by the simultaneous boundary pull.  By
Lemma~\ref{lem:macro-intersection}, it is $t$-intersecting and is generated
inside $[s-1]$.  Since $s>t$, Lemma~\ref{lem:spectral-ledger} gives
\[
 \rho(\mathcal Q)>\rho(\F)=\rho^*,
\]
contradicting the definition of $\rho^*$.

Therefore $\rho^*\le\rho_0$, and dominance over the Frankl families gives
equality.

Now assume~\eqref{eq:strict-candidate-dominance} and suppose that a non-star
spectral extremal structure exists.  Dominance over the Frankl families gives
$n\ge2k-t+2$.  Use the second alternative of
Proposition~\ref{prop:perron-shift} to choose an inclusion-maximal,
left-compressed non-star extremal structure $\F$.  Its support length satisfies $s>t$,
because $s=t$ would force $\G=\{[t]\}$ and $\F=\A_0$.  Moreover,
\[
 \rho(\F)=\rho_0>\rho_r\qquad(1\le r\le k-t),
\]
so~\eqref{eq:candidate-separation} holds for any middle rank.  The simultaneous
boundary pull is therefore available.  Lemmas~\ref{lem:macro-intersection}
and~\ref{lem:spectral-ledger} produce a $t$-intersecting family $\mathcal Q$
with
\[
 \rho(\mathcal Q)>\rho(\F)=\rho_0,
\]
contradicting extremality.  Hence every extremal structure is a full $t$-star.

Conversely, a spectrally extremal full $t$-star must dominate every $\A_r$,
because each $\A_r$ is itself $t$-intersecting.
\end{proof}

The intermediate proposition and the numerical propagation now give the main
result in exactly the form stated in the introduction.

\begin{proof}[Proof of Theorem~\ref{thm:first-candidate}]
Assume first that $\rho_0\ge\rho_1$.  Proposition
\ref{prop:propagation} gives
\[
 \rho_0\ge\rho_1\ge\cdots\ge\rho_{k-t},
\]
so the full $t$-star dominates the Frankl families.  Proposition
\ref{prop:candidate-comparison} then makes it spectrally extremal.

If $\rho_0>\rho_1$, the same propagation gives
$\rho_0>\rho_r$ for every $1\le r\le k-t$.  The strict part of
Proposition~\ref{prop:candidate-comparison} therefore shows that every
extremal structure is a full $t$-star.  If $\rho_0=\rho_1$, then $\A_1$ has the same
radius as the now extremal family $\A_0$, so both are extremal structures and
uniqueness fails.

Conversely, if the full $t$-star is spectrally extremal, it must in particular
dominate the admissible $t$-intersecting family $\A_1$, and hence
$\rho_0\ge\rho_1$.  The same argument shows directly why the condition is
sharp: when $\rho_0<\rho_1$, the family $\A_1$ is a strict counterexample.
\end{proof}

\begin{proof}[Proof of Corollary~\ref{cor:explicit-threshold}]
The right-hand side of~\eqref{eq:explicit-threshold} exceeds $2k-t$, so the
standing range of Theorem~\ref{thm:first-candidate} and the pull lemmas is
satisfied.  Assumption~\eqref{eq:explicit-threshold} gives
\begin{equation}\label{eq:explicit-tail-bounds}
 \begin{aligned}
 n-t-1&\ge (t+1)(k-t)
 +\left\lceil(t+1)\log(t+1)\right\rceil,\\
 n-t-2&\ge (t+1)(k-t)
 +\left\lceil(t+1)\log(t+1)\right\rceil-1.
 \end{aligned}
\end{equation}

Suppose first that $k-t\ge2$.  For the pull from $\A_1$ to $\A_0$,
Lemma~\ref{lem:universal-gain} gives
\begin{equation}\label{eq:first-pull-gain}
 g_{k-t-1}^k
 =\left(\frac{n-t-1}{k-t}\right)^t
  \left(\frac{n-t-2}{k-t-1}\right)^{k-t-1}.
\end{equation}
Moreover,
\[
 \frac{n-t-2}{k-t-1}-\frac{n-t-1}{k-t}
 =\frac{n-k-1}{(k-t)(k-t-1)}>0
\]
by~\eqref{eq:explicit-tail-bounds}.  Therefore, using
\[
 \left\lceil(t+1)\log(t+1)\right\rceil
 \ge(t+1)\log(t+1),
\]
\begin{equation}\label{eq:closed-gain-lower}
 g_{k-t-1}^k
 > \left(\frac{n-t-1}{k-t}\right)^{k-1}
 \ge (t+1)^{k-1}
      \left(1+\frac{\log(t+1)}{k-t}\right)^{k-1}.
\end{equation}
We verify that the last factor is at least $t+1$.  If $t\ge3$, then
$t-1\ge\log(t+1)$, and $\log(1+x)\ge x/(1+x)$ gives
\[
 (k-1)\log\left(1+\frac{\log(t+1)}{k-t}\right)
 \ge
 \frac{(k-1)\log(t+1)}{k-t+\log(t+1)}
 \ge\log(t+1).
\]
If $t=2$, then
$\log3<4/3\le2(k-t)/(k-t+1)$ for $k-t\ge2$.  Using
$\log(1+x)\ge x-x^2/2$, we obtain
\[
 (k-t+1)\log\left(1+\frac{\log3}{k-t}\right)
 \ge \log3+
 \frac{\log3}{2(k-t)^2}
 \bigl(2(k-t)-(k-t+1)\log3\bigr)
 \ge\log3.
\]
Thus~\eqref{eq:closed-gain-lower} yields $g_{k-t-1}>t+1$.

It remains to treat $k-t=1$.  Here $k=t+1$, and
Lemma~\ref{lem:universal-gain} gives
\[
 g_0^k=(n-t-1)^t
 \ge (t+1)^t(1+\log(t+1))^t.
\]
For $t\ge3$, the inequalities
$\log(1+\log(t+1))>\log(t+1)/(1+\log(t+1))$ and
$t>1+\log(t+1)$ imply
\[
 t\log(1+\log(t+1))>\log(t+1).
\]
For $t=2$, the same conclusion follows directly from
$(1+\log3)^2>3$.  Hence $g_0^k>(t+1)^{t+1}$, so again
$g_0>t+1$.

In both cases the strict form of the hypothesis of
Lemma~\ref{lem:adjacent-pull} holds for
$r=1$, because $B_{t,1}=t+1$.  Pulling $\A_1$ to $\A_0$ therefore gives
$\rho_0>\rho_1$: the boundary trace classes of $\A_1$ have positive values
under $P$ at its Perron vector, and the corresponding polynomial inequality
is strict.  Theorem~\ref{thm:first-candidate} now proves the
corollary.
\end{proof}

\section{The case \texorpdfstring{$t=1$}{t=1}}\label{sec:t-one}

For ordinary intersection, the first comparison does not control the whole
Frankl-family sequence at the endpoint $n=2k$: the star may beat $\A_1$ while
losing to a later Frankl family.  For example, formula
\eqref{eq:candidate-orbit-formula} gives, at $(n,k)=(8,4)$,
$\rho_0\approx18.539>\rho_1\approx18.486$, whereas $\rho_3=20$.  We first
record the special gain estimate that completes the diagonal pull for $t=1$.
Once $n\ge2k+1$, the same estimate also makes every adjacent Frankl-family
comparison point in the same direction.

\begin{lemma}
\label{lem:t-one-gain}
Let $t=1$ and $n\ge2k+1$.  For $1\le r\le k-1$, let
$L_{1,r}:=\min\{r,2\}$.  Then the corresponding universal gain satisfies
$g_{k-r-1}>L_{1,r}$.  In particular, $g_{k-r-1}>2$ whenever $r\ge2$.
\end{lemma}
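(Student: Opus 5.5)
With $t=1$ the diagonal rank $r$ has support length $s=1+2r$, so the tail has $N:=n-s=n-2r-1$ coordinates and the deficit is $q:=k-r-1\ge0$. The plan is to insert $n\ge2k+1$ directly into the formula of Lemma~\ref{lem:universal-gain}:
\[
 g_q^k=\Bigl(\frac{N+1}{q+1}\Bigr)^{k-q-1}\Bigl(\frac Nq\Bigr)^{q}
 \quad(q\ge1),\qquad g_0^k=(N+1)^{k-1}.
\]
The hypothesis gives $N+1=n-2r\ge 2k+1-2r=2(q+1)+1$. Hence
\[
 \frac{N+1}{q+1}\ge 2+\frac1{q+1},
 \qquad
 \frac Nq\ge \frac{2q+2}{q}=2+\frac2q .
\]
Both bases are therefore strictly larger than $2$.

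Since the exponents $k-q-1=r$ and $q$ sum to $k-1$, this yields
\[
 g_q^k> 2^{k-1}\Bigl(1+\frac1{2(q+1)}\Bigr)^{r}\Bigl(1+\frac1q\Bigr)^{q}.
\]
For $q\ge1$, the factor $(1+1/q)^q\ge2$ gives $g_q^k>2^k$, so $g_q>2$. For $q=0$ we have $r=k-1\ge1$ and $g_0^k\ge(3)^{k-1}$. This exceeds $2^k$ once $k\ge3$, because $(3/2)^{k-1}\ge 9/4>2$.

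For $k=2$ the case $q=0$ forces $r=1$, where we only need $g_0>L_{1,1}=1$; this holds since $g_0=(N+1)^{1/2}\ge\sqrt3$. For every $r\ge2$ the bound $g_{k-r-1}>2\ge L_{1,r}$ is exactly what is claimed. For $r=1$ the same estimate gives $g>2>1$, except possibly when $q=0$ and $k=2$, which was just handled.

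This argument mirrors the $r=2$ subcase of the $t=2$ case in Lemma~\ref{lem:middle-envelope}, with $n\ge 2k+1=2k-t+2$ playing the role of~\eqref{eq:ground-set-gap}. The only delicate point is the bookkeeping of the boundary cases $q=0$ and small $k$, where the crude factor $(1+1/q)^q\ge2$ is unavailable. There one has to check directly that $3^{k-1}>2^k$, and that $r=1$ needs only $g>1$.
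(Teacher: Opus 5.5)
Your proposal is correct and follows the paper's proof. It uses the same substitution $n-2r\ge 2(q+1)+1$ into Lemma~\ref{lem:universal-gain}, the same factorization $g_q^k\ge 2^{k-1}\bigl(1+\frac{1}{2(q+1)}\bigr)^{r}\bigl(1+\frac1q\bigr)^{q}$ for $q\ge1$, and the same check $3^{k-1}>2^k$ when $q=0$, merely organized by $q$ rather than by $r$. One small slip: that displayed bound is $\ge$, not $>$, since equality occurs at $n=2k+1$; the strictness of $g_q^k>2^k$ therefore comes from the factor $\bigl(1+\frac{1}{2(q+1)}\bigr)^{r}>1$, which holds because $r\ge1$.
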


\begin{proof}
For $r=1$ the desired strict bound is $1$.  If $k-r-1\ge1$, both factors in
\eqref{eq:universal-gain} exceed $1$; if $k-r-1=0$, the same conclusion
follows from~\eqref{eq:gain-q-zero}.

Now let $r\ge2$, so $L_{1,r}=2$.  If $k-r-1=0$, then $k=r+1$, and
 \[
 g_0=(n-2r)^{r/(r+1)}\ge3^{r/(r+1)}>2,
 \]
where the last inequality follows from $3^r>2^{r+1}$ for $r\ge2$.
If $k-r-1\ge1$, formula~\eqref{eq:universal-gain}, together with
$n-1-2r\ge2(k-r-1)+2$, gives
\[
 \begin{aligned}
 g_{k-r-1}^k
 &\ge
 \left(2+\frac1{k-r}\right)^r
 \left(2+\frac2{k-r-1}\right)^{k-r-1}\\
 &\ge2^{k-1}\left(1+\frac1{2(k-r)}\right)
              \left(1+\frac1{k-r-1}\right)^{k-r-1}\\
 &>2^k.
 \end{aligned}
\]
Here the last inequality uses
$(1+1/(k-r-1))^{k-r-1}\ge2$ and
$1+1/(2(k-r))>1$.
This proves the assertion.
\end{proof}

For $t=1$, this special gain estimate supplies exactly the same combinatorial
diagonal pull as in the preceding section.

\begin{corollary}
\label{cor:t-one-profitable-middle}
Let $t=1$.  Under the assumptions of
Lemma~\ref{lem:frequent-omission}, there exists $p_0\in[s-1]$ such that
\begin{equation}\label{eq:t-one-middle-ledger}
 P_{\mathcal N_{p_0}'}(y)
 =\Gamma_{k-r-1}(\eta)
  P_{\mathcal B_{p_0}'}(x)
 \ge P_{\mathcal B'}(x).
\end{equation}
The final inequality in~\eqref{eq:t-one-middle-ledger} is strict.
\end{corollary}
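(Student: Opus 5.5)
This follows the proof of Corollary~\ref{cor:profitable-middle}, with Lemma~\ref{lem:t-one-gain} replacing Lemma~\ref{lem:middle-envelope}. Under the assumptions of Lemma~\ref{lem:frequent-omission} we have $P_{\mathcal B'}(x)>0$. That lemma gives a coordinate $p_0\in[s-1]$ with
\[
 \delta_{p_0}\ge\max\left\{\frac r{s-1},\frac1r\right\}.
\]
By~\eqref{eq:delta-definition}, $P_{\mathcal B_{p_0}'}(x)=\delta_{p_0}P_{\mathcal B'}(x)$. By~\eqref{eq:old-new-trace-polynomials} applied to $\mathcal S'_{p_0}$, all of whose members have size $t+r=k-(k-r-1)$, we get
\[
 P_{\mathcal N_{p_0}'}(y)=\Gamma_{k-r-1}(\eta)P_{\mathcal B_{p_0}'}(x),
\]
which is the equality in~\eqref{eq:t-one-middle-ledger}. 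It therefore suffices to prove $\Gamma_{k-r-1}(\eta)\,\delta_{p_0}>1$.

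For the bound on $1/\delta_{p_0}$, take $t=1$ and $s=1+2r$. Then $s-1=2r$ and $r/(s-1)=1/2$, so
\[
 \frac1{\delta_{p_0}}\le\min\{2,r\}=L_{1,r}.
\]
For the gain, I would note that the standing hypothesis that the star dominates the Frankl families gives $n\ge2k-t+2=2k+1$ by Lemma~\ref{lem:numerical-consequences}. This is exactly the range of Lemma~\ref{lem:t-one-gain}. The gain $g_{k-r-1}$ is computed with $n-s=n-1-2r$ tail coordinates, which matches the convention in that lemma. Hence
\[
 \Gamma_{k-r-1}(\eta)\ge g_{k-r-1}>L_{1,r}\ge\frac1{\delta_{p_0}}.
\]
Multiplying by $P_{\mathcal B_{p_0}'}(x)=\delta_{p_0}P_{\mathcal B'}(x)>0$ gives the strict inequality in~\eqref{eq:t-one-middle-ledger}.

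The degenerate cases are handled as in the remark after Lemma~\ref{lem:universal-gain}, by continuity or directly. If $\beta=0$ with $k-r-1=0$, then~\eqref{eq:gain-q-zero} applies. If $\beta=0$ with $k-r-1>0$, then $P_{\mathcal B'}(x)=0$, which the hypothesis excludes. By~\eqref{eq:positive-boundary-contribution}, $P_{\mathcal B'}(x)>0$ in any case.

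The only real point is the applicability of Lemma~\ref{lem:t-one-gain}, namely checking that $n\ge2k+1$ holds here. This follows from dominance, and also from the restriction of Theorem~\ref{thm:t-one}(1) to $n\ge2k+1$. The remaining step is the arithmetic $\frac r{s-1}=\frac12$ when $t=1$.
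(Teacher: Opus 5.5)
Your proof is correct and is essentially the paper's own argument. You get $n\ge 2k+1$ from dominance via Lemma~\ref{lem:numerical-consequences}, and the bound $1/\delta_{p_0}\le\min\{(s-1)/r,r\}=\min\{2,r\}=L_{1,r}$ from Lemma~\ref{lem:frequent-omission}; Lemma~\ref{lem:t-one-gain} together with \eqref{eq:delta-definition} and \eqref{eq:old-new-trace-polynomials} then gives the strict inequality. Rely on the dominance route alone for $n\ge 2k+1$: the corollary's hypotheses do not include that range directly, and this corollary is used to prove Theorem~\ref{thm:t-one}(1), so appealing to that theorem's range does not justify it.
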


\begin{proof}
Dominance over the Frankl families gives $n\ge2k+1$ by
Lemma~\ref{lem:numerical-consequences}.  Choose $p_0$ attaining the maximum
in~\eqref{eq:delta-bound}.  Then
\[
 \frac1{\delta_{p_0}}
 \le\min\left\{\frac{s-1}{r},r\right\}=L_{1,r}.
\]
Lemma~\ref{lem:t-one-gain} yields
$\Gamma_{k-r-1}(\eta)\ge g_{k-r-1}>L_{1,r}$.  Equations
\eqref{eq:delta-definition} and~\eqref{eq:old-new-trace-polynomials} give
\[
 P_{\mathcal N_{p_0}'}(y)
 =\Gamma_{k-r-1}(\eta)\delta_{p_0}
  P_{\mathcal B'}(x)
 >P_{\mathcal B'}(x),
\]
as required.
\end{proof}

The two boundary selections give the $t=1$ version of the intermediate
Frankl-family comparison step.  It is recorded here only for use in the proof of
Theorem~\ref{thm:t-one}.

\begin{proposition}\label{prop:t-one-candidate-comparison}
Let $k\ge2$ and $n\ge2k$.  If the full star dominates the Frankl families,
then it is spectrally extremal among all intersecting subfamilies of
$\binom{[n]}k$.
If, moreover, $n\ge2k+1$ and
\begin{equation}\label{eq:t-one-strict-candidate-dominance}
 \rho_0>\rho_r\qquad(1\le r\le k-1),
\end{equation}
then every spectral extremal structure is a full star, up to permutation.
\end{proposition}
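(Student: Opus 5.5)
The argument will mirror the proof of Proposition~\ref{prop:candidate-comparison}, with Corollary~\ref{cor:t-one-profitable-middle} in place of Corollary~\ref{cor:profitable-middle}. The one new feature is the endpoint $n=2k$, where the hypothesis itself must be shown to be vacuous. Assume the full star dominates the Frankl families. By Lemma~\ref{lem:numerical-consequences} with $t=1$, dominance forces $n\ge2k+1$. So although the statement allows $n=2k$, at $n=2k$ the hypothesis can never hold, and both assertions are empty there. This is consistent with the computation $\rho_{k-1}>\rho_0$ recorded in the introduction. Hence we may assume $n\ge2k+1$ throughout, which is exactly the range in which Lemma~\ref{lem:t-one-gain} and Corollary~\ref{cor:t-one-profitable-middle} are available.

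For the extremality statement, put $\rho^*$ for the maximum spectral radius of an intersecting family and suppose $\rho^*>\rho_0$. By Proposition~\ref{prop:perron-shift}, whose proof only used $n\ge2k-t+2=2k+1$ and the general reductions, we choose an inclusion-maximal, left-compressed extremal $\F$ with minimal support length $s$ and an ordered Perron vector $x$; here we use the variant of that selection that does not require $\F$ to be non-star, since $\rho^*>\rho_0$ already rules out the star. If $s=1$, then $\G=\{[1]\}$ and $\F=\A_0$, a contradiction, so $s>1=t$.

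We then build the simultaneous boundary pull. For each off-diagonal pair of ranks we use Lemma~\ref{lem:profitable-off-middle}; its proof is purely numerical in $N=n-s$, $k$, $u$, $v$ and does not use $t\ge2$. Its exceptional case $(k,u,v,N)=(2,1,0,3)$ is excluded by the antichain property, exactly as argued there. If a middle rank with parameter $r$ occurs, we have $\rho(\F)=\rho^*>\rho_0\ge\rho_r$, so \eqref{eq:candidate-separation} holds. Lemma~\ref{lem:frequent-omission}, which has no restriction on $t$, together with Corollary~\ref{cor:t-one-profitable-middle} then supplies the common omitted coordinate. Lemma~\ref{lem:macro-intersection} makes $\mathcal Q$ intersecting. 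Lemma~\ref{lem:spectral-ledger}, whose $t=1$ middle-rank input is \eqref{eq:t-one-middle-ledger}, gives $\rho(\mathcal Q)>\rho(\F)=\rho^*$, a contradiction. Therefore $\rho^*=\rho_0$.

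For uniqueness, assume $n\ge2k+1$ together with \eqref{eq:t-one-strict-candidate-dominance}, and suppose a non-star extremal structure exists. Proposition~\ref{prop:perron-shift}, valid since $n\ge2k+1$, yields a non-star, maximal, compressed $\F$ of minimal support. As before $s>1$, and $\rho(\F)=\rho_0>\rho_r$ supplies \eqref{eq:candidate-separation} for any middle rank. The same simultaneous pull then gives an intersecting family with radius strictly above $\rho_0$, contradicting extremality. The main point needing care is not a new estimate but checking that each cited lemma from Section~\ref{sec:proof} really holds for $t=1$, in particular the off-diagonal product inequality \eqref{eq:pair-product} at small $k$ such as $k=2$. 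The only genuinely $t$-dependent input, Lemma~\ref{lem:middle-envelope}, is replaced wholesale by Lemma~\ref{lem:t-one-gain}.
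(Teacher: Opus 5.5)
Your proposal is correct and follows the paper's proof essentially step for step. It uses the same maximal, left-compressed extremal family of minimal support from Proposition~\ref{prop:perron-shift}, the same simultaneous boundary pull (Lemma~\ref{lem:profitable-off-middle} for unequal complementary pairs and Corollary~\ref{cor:t-one-profitable-middle} for the middle rank), and reaches the same contradiction via Lemmas~\ref{lem:macro-intersection} and~\ref{lem:spectral-ledger}. Your one difference is a small gain in clarity: you obtain $n\ge2k+1$ from Lemma~\ref{lem:numerical-consequences} before invoking Proposition~\ref{prop:perron-shift}, which needs it, whereas the paper records this fact only after making that choice.
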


\begin{proof}
Suppose, as in the proof of Proposition~\ref{prop:candidate-comparison}, that a
spectral extremal structure $\F$ satisfies
$\rho(\F)>\rho_0$, and choose it with inclusion-maximality, left-compression,
and minimal support length $s$.  Lemma~\ref{lem:tight-pairs} gives
$s\le2k-1$.  If $s=1$, then $\G=\{\{1\}\}$ and $\F=\A_0$, contrary to
$\rho(\F)>\rho_0$; hence $s>1$.  Dominance over the Frankl families gives
$n\ge2k+1$ by
Lemma~\ref{lem:numerical-consequences}.

Lemma~\ref{lem:profitable-off-middle} handles every unequal complementary
pair, while Corollary~\ref{cor:t-one-profitable-middle} handles the possible
diagonal rank.  Use these selections in the simultaneous boundary pull of
Subsection~\ref{subsec:simultaneous-pull}.  Lemma~\ref{lem:macro-intersection}
gives a $1$-intersecting family $\mathcal Q$ generated inside $[s-1]$, and
Lemma~\ref{lem:spectral-ledger} gives
\[
 \rho(\mathcal Q)>\rho(\F),
\]
contradicting the maximality of $\F$.  Hence the full star is spectrally
extremal.

Now assume~\eqref{eq:t-one-strict-candidate-dominance} and suppose that a
non-star spectral extremal structure exists.  Since $n\ge2k+1$, the second alternative
of Proposition~\ref{prop:perron-shift} gives an inclusion-maximal,
left-compressed non-star extremal structure $\F$ with minimal support length $s$.  We
have $s>1$, since $s=1$ would force $\G=\{\{1\}\}$ and $\F=\A_0$.  Moreover,
\[
 \rho(\F)=\rho_0>\rho_r\qquad(1\le r\le k-1),
\]
so the separation hypothesis holds for every possible middle rank.
The simultaneous boundary pull is therefore available.  Its boundary is
nonempty, and Lemmas~\ref{lem:macro-intersection} and
\ref{lem:spectral-ledger} produce an intersecting family $\mathcal Q$ with
\[
 \rho(\mathcal Q)>\rho(\F)=\rho_0,
\]
contradicting extremality.  Thus every extremal structure is a full star.
\end{proof}

The same estimate also determines the order of all Frankl families once
$n\ge2k+1$.

\begin{proposition}
\label{prop:t-one-sequence}
Let $k\ge2$ and $n\ge2k+1$.  Then the full star dominates every Frankl
family.  Unless $(n,k)=(5,2)$, their radii are
strictly decreasing:
\begin{equation}\label{eq:t-one-strict-sequence}
 \rho_0>\rho_1>\cdots>\rho_{k-1}.
\end{equation}
In the exceptional case, $\rho_0=\rho_1=2$.
\end{proposition}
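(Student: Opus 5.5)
The approach is to use the adjacent pull of Lemma~\ref{lem:adjacent-pull}, specialized to $t=1$. For each $1\le r\le k-1$, that lemma gives $\rho_{r-1}\ge\rho_r$ as soon as $g_{k-1-r}\ge B_{1,r}=(2r)/r=2$. The inequality is strict when this gain condition is strict. Note that Lemma~\ref{lem:adjacent-pull} only uses $n>2k-t$ and the universal gain; it does not assume dominance. So the whole chain reduces to comparing $g_{k-1-r}$ with $2$ for every $r$, with $n\ge 2k+1$.

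For $r\ge2$, Lemma~\ref{lem:t-one-gain} already gives $g_{k-r-1}>2$. Its proof uses only $n\ge2k+1$ (through $n-1-2r\ge2(k-r-1)+2$ and $n-2r\ge3$), not dominance. Hence $\rho_{r-1}>\rho_r$ strictly for all $2\le r\le k-1$.

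The remaining comparison is $r=1$, i.e.\ $\rho_0$ versus $\rho_1$, with gain $g_{k-2}$ against threshold $2$. I compute this from \eqref{eq:universal-gain} with $N=n-3$ tail coordinates and $q=k-2$. This gives
\[
g_{k-2}^k=\left(\frac{n-2}{k-1}\right)^{1}\left(\frac{n-3}{k-2}\right)^{k-2}
\]
for $k\ge3$, and $g_0^2=n-2$ for $k=2$.

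For $k=2$ and $n\ge6$, we get $g_0=\sqrt{n-2}\ge2$ with equality only at $n=6$. Here I would avoid relying on the pull and instead settle the graph case directly. $\A_0=K_{1,n-1}$ has $\rho_0=2\sqrt{n-1}$ in this normalization, since $P=2\sum_{\text{edges}}x_ix_j$. $\A_1=K_3$ plus isolated vertices has $\rho_1=2$. So $\rho_0\ge\rho_1$ iff $n\ge2$, with equality iff $n=2$... This needs rechecking: the paper's normalization gives $\rho(K_3)=k\cdot 3\cdot 3^{-1}=2$, while $\rho(K_{1,4})=2\cdot\max 4ab$ with $a^2+4b^2=1$, which equals $2$ at $n=5$. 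So I would compute both radii via Proposition~\ref{prop:candidate-orbit-formula} or \eqref{eq:endpoint-radii}. This gives $\rho_0=(n-1)^{1/2}$ and $\rho_1=\rho_{k-t}=\binom{2}{1}=2$. The comparison is then strict iff $n>5$, with equality exactly at $(n,k)=(5,2)$.

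For $k\ge3$, $n\ge2k+1$ gives $(n-2)/(k-1)\ge(2k-1)/(k-1)>2$ and $(n-3)/(k-2)\ge(2k-2)/(k-2)>2$. Therefore $g_{k-2}>2$ strictly, and $\rho_0>\rho_1$.

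Chaining these comparisons gives \eqref{eq:t-one-strict-sequence} outside $(5,2)$ and domination throughout. The main obstacle is only verifying that the $n\ge2k+1$ estimates in Lemma~\ref{lem:t-one-gain} and Lemma~\ref{lem:adjacent-pull} are independent of the dominance hypothesis, and handling the boundary case $k=2$ by the explicit formula.
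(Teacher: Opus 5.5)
Your overall route is the paper's: the strict adjacent pull with $B_{1,r}=2$, Lemma~\ref{lem:t-one-gain} for $r\ge2$, and a direct computation of $\rho_0=\sqrt{n-1}$ against $\rho_1=2$ when $k=2$. Your final values there are correct, though you should delete the intermediate claim $\rho_0=2\sqrt{n-1}$.

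The genuine gap is in the case $k\ge3$, $r=1$. You have
\[
 g_{k-2}^k=\frac{n-2}{k-1}\left(\frac{n-3}{k-2}\right)^{k-2}.
\]
Knowing only that both bases exceed $2$ gives $g_{k-2}^k>2\cdot2^{k-2}=2^{k-1}$, that is, $g_{k-2}>2^{(k-1)/k}$, which is strictly less than $2$. The exponents in \eqref{eq:universal-gain} sum to $(k-1)/k$, not $1$, so one factor of $2$ is missing. Your conclusion ``therefore $g_{k-2}>2$'' does not follow. For instance, at $(n,k)=(7,3)$ your bounds give only $g_1^3>4$, while you need $g_1^3>8$ (the true value is $10$).

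The missing idea is to use how far the second base exceeds $2$, not just that it does. Since $(n-3)/(k-2)\ge 2+2/(k-2)$,
\[
 \left(\frac{n-3}{k-2}\right)^{k-2}\ge 2^{k-2}\left(1+\frac1{k-2}\right)^{k-2}\ge 2^{k-1}.
\]
Combined with $(n-2)/(k-1)\ge 2+1/(k-1)>2$, this gives $g_{k-2}^k>2^k$. Hence $g_{k-2}>2=B_{1,1}$, and $\rho_0>\rho_1$ by the strict part of Lemma~\ref{lem:adjacent-pull}. This is exactly the step the paper uses, and the same device appears in the proof of Lemma~\ref{lem:t-one-gain}. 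With it inserted, your argument is complete.
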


\begin{proof}
For every $2\le r\le k-1$, $B_{1,r}=2$, and
Lemma~\ref{lem:t-one-gain} gives $g_{k-r-1}>2$.  Hence the strict part of
Lemma~\ref{lem:adjacent-pull} gives $\rho_{r-1}>\rho_r$.

It remains to compare $\rho_0$ and $\rho_1$.  If $k\ge3$, then
$n-3\ge2(k-2)+2$, so formula~\eqref{eq:universal-gain} gives
\[
 g_{k-2}^k\ge
 \left(2+\frac1{k-1}\right)
 \left(2+\frac2{k-2}\right)^{k-2}
 >2^{k-1}\left(1+\frac1{k-2}\right)^{k-2}
 \ge2^k.
\]
Thus $g_{k-2}>2=B_{1,1}$ and the strict part of
Lemma~\ref{lem:adjacent-pull} gives $\rho_0>\rho_1$.  Together with the strict
comparisons for $r\ge2$ already proved, this
yields~\eqref{eq:t-one-strict-sequence}.  If
$k=2$, the only Frankl families are the star $K_{1,n-1}$ and the triangle $K_3$,
whose spectral radii are $\sqrt{n-1}$ and $2$, respectively.  Hence
$\rho_0>\rho_1$ for $n\ge6$, while at $(n,k)=(5,2)$ one has
$\rho_0=\rho_1=2$.
\end{proof}

\begin{proof}[Proof of Theorem~\ref{thm:t-one}]
Suppose first that $n\ge2k+1$.  Proposition~\ref{prop:t-one-sequence} gives
dominance over the Frankl families, and
Proposition~\ref{prop:t-one-candidate-comparison} gives
$\rho(\F)\le\rho_0$ for every intersecting $\F$.

If $(n,k)\ne(5,2)$, Proposition~\ref{prop:t-one-sequence} also gives
\eqref{eq:t-one-strict-candidate-dominance}, so the strict part of
Proposition~\ref{prop:t-one-candidate-comparison} shows that equality holds
only for a full star, up to permutation.  At $(n,k)=(5,2)$, the full star
$K_{1,4}$ and the triangle $K_3$ both have spectral radius $2=\rho_0$.
To see that there are no further equality cases, regard an intersecting
$2$-uniform family as a graph.  A graph with at most one edge is a subgraph
of a star.  Otherwise choose two edges $ab$ and $ac$.  Every other edge
either contains $a$ or is the edge $bc$.  If $bc$ occurs, every edge
containing $a$ must also meet $bc$ and hence is $ab$ or $ac$, so the graph is
a subgraph of the triangle $abc$.  If $bc$ does not occur, all edges have the
common endpoint $a$, so the graph is a subgraph of a star.  A proper
subgraph of $K_{1,4}$ has spectral radius at most $\sqrt3<2$, and a proper
subgraph of $K_3$ has spectral radius at most $\sqrt2<2$.  Hence the full
star and the triangle, together with the requisite isolated vertices, are
exactly the equality structures.

It remains to consider $n=2k$.  Let $m=|\F|$.  The
Erd\H{o}s--Ko--Rado theorem gives
\[
 m\le \binom{2k-1}{k-1}=\binom{2k-1}{k}.
\]
For $k\ge3$, Bai and Lu~\cite{BaiLu}, using the same adjacency-tensor
normalization as~\eqref{eq:spectral-polynomial}, proved that a $k$-uniform
hypergraph with $m$ edges has spectral radius at most $f_k(m)$, where $f_k$
is strictly increasing and
\[
 f_k\left(\binom{a}{k}\right)=\binom{a-1}{k-1}
 \qquad\text{for every integer }a\ge k.
\]
Their equality theorem says that equality at a binomial edge count occurs
only for the corresponding complete $k$-graph together with isolated
vertices.  Consequently,
\[
 \rho(\F)\le f_k(m)
 \le f_k\left(\binom{2k-1}{k}\right)
 =\binom{2k-2}{k-1},
\]
and equality holds if and only if, up to permutation,
$\F=\binom{[2k-1]}k$.  When $k=2$,
\eqref{eq:spectral-polynomial} is the usual adjacency-matrix Rayleigh
quotient, and Stanley's inequality~\cite{Stanley},
\[
 \rho(\F)\le\frac{\sqrt{1+8m}-1}{2},
\]
and its equality characterization give the same conclusion: $m\le3$,
$\rho(\F)\le2$, and equality holds only for a triangle together with an
isolated vertex.  Hence in every case the last Frankl family is the unique
endpoint extremal structure up to permutation, with
\[
 \rho_{k-1}=\binom{2k-2}{k-1}.
\]

It remains only to verify the strict comparison with the star asserted in the
statement.  Formula~\eqref{eq:endpoint-radii} gives
\[
 \left(\frac{\rho_{k-1}}{\rho_0}\right)^k
 =\frac{k^k}{(2k-1)(k-1)^{k-1}}>1,
\]
because
\[
 k\left(1+\frac1{k-1}\right)^{k-1}
 \ge2k>2k-1.
\]
Thus $\rho_{k-1}>\rho_0$, as required.
\end{proof}

\section{The spectral complete-intersection conjecture}\label{sec:conjecture}

Theorem~\ref{thm:first-candidate} settles the extremal value whenever
$t\ge2$ and $\rho_0\ge\rho_1$ and proves uniqueness up to permutation
throughout the strict range $\rho_0>\rho_1$, while
Theorem~\ref{thm:t-one} proves the full
spectral complete-intersection value statement for ordinary intersection: a
maximizing Frankl family is $\A_0$ for $n\ge2k+1$, with the full star unique
up to permutation except for the additional triangle at $(n,k)=(5,2)$, and
the unique extremal structure up to permutation is $\A_{k-1}$ for $n=2k$.
For general $t$, the natural next statement is that the
list of Frankl families always contains a spectral extremal structure.

\begin{conjecture}\label{conj:SCIT}
Let $1\le t\le k$ and $n>2k-t$.  Among all $t$-intersecting subfamilies of
$\binom{[n]}k$, at least one spectral extremal structure is a Frankl family
$\A_r$.  Equivalently, every $t$-intersecting family
$\F\subseteq\binom{[n]}k$ satisfies
\begin{equation}\label{eq:SCIT}
 \rho(\F)\le
 \max_{0\le r\le k-t}
 \rho\left(
 \left\{F\in\binom{[n]}k:|F\cap[t+2r]|\ge t+r\right\}
 \right).
\end{equation}
\end{conjecture}

The value statement~\eqref{eq:SCIT} should be separated from its equality
theory.  A natural strong form predicts that if a unique index $r$ maximizes
the right-hand side, then every spectral extremal structure equals $\A_r$ up to
permutation.
Theorem~\ref{thm:first-candidate} confirms this strong form when the unique
maximizing Frankl family is $\A_0$.  When several Frankl-family radii
coincide, a separate
equality analysis may still be required.

Theorem~\ref{thm:first-candidate} resolves the value part of
Conjecture~\ref{conj:SCIT} on the entire side detected by
$\rho_0\ge\rho_1$ when $t\ge2$, including rigidity in the strict range, while
Theorem~\ref{thm:t-one} proves the conjecture in full for $t=1$, including the
complete equality classification.  A
targeted version of the same Perron-aware push--pull mechanism is the
remaining step toward the full spectral complete intersection theorem for
$t\ge2$ outside the star-dominant range.

\section*{Acknowledgements}

The authors gratefully acknowledge the Fourth ECOPRO Student Research
Program, held at the Institute for Basic Science (IBS) in summer 2026,
for its support. The authors acknowledge the use of AI tools during the
exploratory stage of this project. All mathematical arguments and proofs
presented in the final manuscript were developed and rigorously verified
by the authors. The authors take full responsibility for the content of
the manuscript.

\end{document}